\newcommand{\R}{{\mathbb R}}
\newcommand{\Pt}{{\widetilde{P}}}
\numberwithin{equation}{section}
\newtheorem{theorem}{Theorem}[section]
\newtheorem{proposition}[theorem]{Proposition}
\newtheorem{lemma}[theorem]{Lemma}
\newtheorem{corollary}[theorem]{Corollary}
\newtheorem{definition}[theorem]{Definition}
\newtheorem{remark}[theorem]{Remark}
\begin{document}
\title[Global well-posedness for a coupled system of gKdV equations]{Solitary wave solutions and global well-posedness for a coupled system of gKdV equations}

\author{Andressa Gomes}
\address{IMECC-Unicamp, Rua S\'ergio Buarque de Holanda, 651, CEP 13083-859, Campinas-SP, Brazil}
\email{gomes.andressa.mat@outlook.com}

\author{Ademir Pastor}
\address{IMECC-Unicamp, Rua S\'ergio Buarque de Holanda, 651, CEP 13083-859, Campinas-SP, Brazil}
\email{apastor@ime.unicamp.br}

\begin{abstract}
In this work we consider the initial-value problem associated with a coupled system of generalized Korteweg-de Vries equations. We present a relationship between the best constant for a Gagliardo-Nirenberg type  inequality and a criterion for the existence of global solutions  in the energy space. We prove that such a constant is directly related to the existence problem of solitary-wave solutions with minimal mass, the so called \textit{ground state} solutions. To guarantee the existence of {ground states} we use a variational method.
\end{abstract}

\subjclass[2010]{76B25, 35Q51, 35Q53, 49J27.}

\keywords{Global well-posedness, Coupled KdV systems, Ground-state solutions}
\maketitle

\section{Introduction}
Nonlinear dispersive systems appear in many physical applications. They can be used, for instance, to model the propagation of waves in water surface or to describe the interaction of nonlinear internal waves. In the present paper we are interested in systems having the Hamiltonian form
\begin{equation}\label{systemint}
\begin{cases}
\partial_{t}u + \partial_{x}^{3}u+ \mu \partial_{x}( H_{u}(u,v)) =0,\\
\partial_{t}v + \partial_{x}^{3}v + \mu \partial_{x} (H_{v}(u,v)) =0, 
\end{cases}
\end{equation}
where $u=u(x,t)$ and $v=v(x,t)$ are real-valued functions, $H$ is a smooth function, $H_{u}$ and $H_{v}$ denote the derivative of $H$ with respect to $u$ and $v$, respectively, and $\mu$ is real constant which we normalize to be $\pm1$.

Systems of the form \eqref{systemint} are said to be of KdV type and model important phenomena in the propagation on nonlinear waves. To cite a few examples, in the case $\mu=1$ and
\begin{equation}\label{geargrinshawsytem}
H(u,v) = A u^{3} + Bv^{3} + Cu^{2}v + Duv^{2}
\end{equation}
with $A$, $B$, $C$ and $D$ real constants, the system was derived by Gear and Grimshaw \cite{geargrimshaw} to  describe the strong interaction of two-dimensional long internal gravity waves propagating on neighboring pycnoclines in a stratified fluid.  Also, in the case 
$$H(u,v)=u^2v,$$
 system \eqref{systemint} is a particular case of the Majda-Biello system \cite{majdabiello03} (see also \cite{majda2004} and \cite{biello2009}), which models the nonlinear interaction of long-wavelength equatorial Rossby waves and barotropic Rossby waves.

The issue of local and global well-posedness for the initial-value problem (IVP) associated to \eqref{systemint} have became a major topic in the theory of dispersive equations in recent years. Let us briefly recall some  results of our interest   available in the current literature.
The well-posedness problem associated to IVP \eqref{systemint} with function  $H$ given by \eqref{geargrinshawsytem} was studied by many authors. For instance, Bona, Ponce, Saut and Tom  \cite{bonaponcesauttom}  proved that, under some restrictions on the coefficients, the associated IVP is globally well-posed in  $H^{s}(\mathbb{R}) \times H^{s}(\mathbb{R})$, $s \geq 1$. Also, Linares and Panthee in \cite{pantheelinares04}  obtained  the \textit{sharp} local result for Sobolev spaces with index $s > - \frac{3}{4}$. Besides, in \cite{pantheelinares04} was also proved the global well-posedness for $s > - \frac{3}{10}$ under some restrictions on the coefficients $A$, $B$, $C$ e $D$. The well-posedness for the Majda-Biello system was studied, for instance,  by Oh \cite{oh2009} where the author proved local well-posedness in $H^{s}(\mathbb{R}) \times H^{s}(\mathbb{R})$, $s > -\frac{3}{4}$ and in $H^{-\frac{1}{2}}(\mathbb{T}) \times H^{-\frac{1}{2}}(\mathbb{T})$. In \cite{oh2009-2},  via the I-method, Oh established the global well-posedness
$H^{s}(\mathbb{R}) \times H^{s}(\mathbb{R})$, $s > \frac{3}{4}$ and $H^{s}(\mathbb{T}) \times H^{s}(\mathbb{T})$, $s > -\frac{1}{2}$. Also, Guo \textit{et al.} \cite{titi}  considered the periodic problem and used a successive
time-averaging method to prove the global well-posedness in the homogeneous Sobolev space $\dot{H}^s(\mathbb{T})$, $s\geq0$.

Panthee and Scialom in \cite{pantheescialon} studied \eqref{systemint} with $H(u,v) = \frac{1}{3} u^{3}v^{3}$. In this case, the system contains a pair of ``critical'' generalized KdV equations.   The authors showed local well-posedness in  $H^{s}(\mathbb{R}) \times H^{s}(\mathbb{R})$, $s \geq 0$  utilizing the sharp smoothing estimates to the linear problem combined with the contraction mapping principle. Global well-posedness for data with small Sobolev norm was also established. In particular, they showed if $\|(u_{0}, v_{0})\|_{L^{2} \times L^{2}} < \|(S,S)\|_{L^{2} \times L^{2}}$, where $S$ is an associated ground-sate solution, then the IVP is globally well-posed in $H^{s}(\mathbb{R}) \times H^{s}(\mathbb{R})$ for $s > \frac{3}{4}$.

Corcho and Panthee in \cite{phantheecorcho} considered a  coupled system of modified KdV equations. More precisely, they studied \eqref{systemint} with   
$$
H(u,v) = \frac{a_{1} u^{4}}{4} + \frac{b_{1}}{4}v^{4} + \frac{a^{2}}{2}(uv)^{2} +  \frac{a_{3}}{3}u^{3}v + \frac{a_{4}}{3}uv^{3}.
$$
The authors used the second generations of the modified energy and almost conserved quantities introduced by Colliander, Keel, Staffilani, Takaoka, and Tao \cite{colliander01,  colliander03} to obtain global well-posedness in  $H^{s}(\mathbb{R}) \times H^{s}(\mathbb{R})$ for $s > \frac{1}{4}$.

Alarcon, Angulo and Montenegro \cite{aam} studied  \eqref{systemint} with  $H(u,v) = u^{k+1}v^{k+1}$, where $k \geq 1$ is a natural number and obtained global well-posedness in  $H^{s}(\mathbb{R}) \times H^{s}(\mathbb{R})$, $s\geq1$, under suitable conditions on $k$. Moreover, the authors also established sufficient conditions for the orbital stability and instability  of the associated traveling waves.

Our main objective in this paper is to study the IVP associated with \eqref{systemint} when $H$ has the form
\begin{equation} \label{functionHint}
H(u,v)  = \frac{a}{2k+2} \left( u^{2k+2} + v^{2k+2}\right)+ \frac{b}{k+1} (uv)^{k+1} + \frac{c}{k}u^{k+2}v^{k} + \frac{d}{k}  u^{k}v^{k+2}, \\
\end{equation}
with $k\geq1$ a natural number and $a$, $b$, $c$ e $d$ nonnegative real constants. More precisely, we are interested in the IVP
\begin{equation}\label{PVIsystem}
\begin{cases}
\partial_{t}u + \partial_{x}^{3}u  + \mu \partial_{x} ( f(u,v)) = 0, \\
\partial_{t}v + \partial_{x}^{3}v  + \mu \partial_{x} (g(u,v)) = 0, \ t >0, \ x \in \mathbb{R},\\
(u(x,0), v(x,0) )  =  (u_{0}(x),v_{0}(x)),
\end{cases}
\end{equation}
with
\begin{equation}\label{nonlinear}
\begin{cases}
f(u,v) := H_u(u,v)= a\  u^{2k+1} + b \ u^{k}v^{k+1} + \frac{k+2}{k} \ c \ u^{k+1}  v^{k} + d \  u^{k-1}v^{k+2}, \\
g(u,v) := H_v(u,v)= a \ v^{2k+1} + b \ v^{k}u^{k+1} +  \frac{k+2}{k} \ d \ v^{k+1} u^{k} +c \  v^{k-1}u^{k+2}.
\end{cases}
\end{equation}
From \eqref{functionHint} and \eqref{nonlinear} it is easily seen that
\begin{equation}\label{functionh2}
H(u,v)=\frac{1}{2k+2}\left[f(u,v)u+g(u,v)v\right]=\frac{1}{2k+2}\left[H_u(u,v)u+H_v(u,v)v\right].
\end{equation}
Following the standard nomenclature in the literature, for $\mu =1$ the system \eqref{PVIsystem} is said to be \textit{focusing} whereas for $\mu = - 1$ it is called \textit{defocusing}. Note that our function  $H$ given by \eqref{functionHint} generalizes the models in \cite{aam, bona17, bonaponcesauttom, phantheecorcho, pantheelinares04, pantheescialon}.  So our work may be seen as a natural extension of these works.

Let us now describe our results. First of all, the local well-posedness for IVP \eqref{PVIsystem} can be established similarly to \cite{aam}. More specifically, combining smoothing effects with a contraction principle argument we obtain the following result.

\begin{theorem}\label{lwpsystem}
Let $k\geq 1$ and $s \geq 1$. Then for any $(u_{0}, v_{0}) \in H^{s}(\mathbb{R}) \times H^{s}(\mathbb{R})$ there exists $T=T(\|(u_{0},v_{0})\|_{H^s}) > 0$ and a unique strong solution $(u,v)$ of the IVP \eqref{PVIsystem} in the class
\begin{equation}\label{solutionclass}
\begin{array}{ccc}
(u,v) \in C([0,T] ; H^{s}(\mathbb{R}) \times H^{s}(\mathbb{R})),\\ 
\|(u,v)\|_{L_{x}^{2}L_{T}^{\infty} \times L_{x}^{2}L_{T}^{\infty}}  < \infty,\\ 
\|(u_{x},v_{x})\|_{L_{T}^{4}L_{x}^{\infty} \times L_{T}^{4}L_{x}^{\infty}} < \infty,\\
\|\partial_{x} D_{x}^{s}(u,v)\|_{L_{x}^{\infty}L_{T}^{2} \times L_{x}^{\infty}L_{T}^{2}} < \infty .\\ 
\end{array}
\end{equation}
Moreover, for any $T_{0} \in (0,T)$ there exists a neighborhood $V_{0}$ of $(u_{0}, v_{0}) \in H^{s}(\mathbb{R}) \times H^{s}(\mathbb{R})$ such that the map $(u_{0}, v_{0}) \mapsto (u(t), v(t))$ from $V_{0}$ into the class defined in \eqref{solutionclass} (with $T_0$ instead of $T$) is Lipschitz.
\end{theorem}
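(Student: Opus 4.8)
The plan is to run a fixed-point argument for the Duhamel (integral) formulation of \eqref{PVIsystem} in the metric space determined by the four mixed space-time norms appearing in \eqref{solutionclass}. Writing $U(t)=e^{-t\partial_x^3}$ for the unitary group associated with the linear equation $\partial_t w+\partial_x^3 w=0$, a solution of \eqref{PVIsystem} is recast as a fixed point of the map $\Phi(u,v)=\bigl(\Phi_1(u,v),\Phi_2(u,v)\bigr)$, where
\begin{equation*}
\Phi_1(u,v)(t)=U(t)u_0-\mu\int_0^t U(t-t')\,\partial_x f(u,v)(t')\,dt',
\end{equation*}
and $\Phi_2$ is defined analogously with $v_0$ and $g$ in place of $u_0$ and $f$. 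First I would introduce the Banach space $X_T^s$ of pairs carrying the norm
\begin{equation*}
\|(u,v)\|_{X_T^s}=\|(u,v)\|_{L_T^\infty H^s}+\|(u,v)\|_{L_x^2 L_T^\infty}+\|(\partial_x u,\partial_x v)\|_{L_T^4 L_x^\infty}+\|\partial_x D_x^s(u,v)\|_{L_x^\infty L_T^2},
\end{equation*}
and work on a closed ball $\{(u,v)\in X_T^s:\|(u,v)\|_{X_T^s}\le M\}$ with $M\sim\|(u_0,v_0)\|_{H^s}$ and $T$ to be chosen small.

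Next I would collect the linear estimates for $U(t)$ that are standard since the work of Kenig, Ponce and Vega: the Kato smoothing effect $\|\partial_x D_x^s U(t)\phi\|_{L_x^\infty L_T^2}\lesssim\|\phi\|_{\dot H^s}$ together with its inhomogeneous (dual) counterpart, the maximal function estimate $\|U(t)\phi\|_{L_x^2 L_T^\infty}\lesssim\|\phi\|_{H^s}$ valid for $s\ge1$, and the Strichartz-type bound $\|\partial_x U(t)\phi\|_{L_T^4 L_x^\infty}\lesssim\|\phi\|_{L^2}$. Applying these to $U(t)u_0$ and $U(t)v_0$ controls the linear part of $\Phi$ by $C\|(u_0,v_0)\|_{H^s}$ in each of the four norms; the key structural point is that the one-derivative gain in the smoothing estimate is exactly what absorbs the extra $\partial_x$ sitting in front of the nonlinearity in the Duhamel term.

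The heart of the argument, and the step I expect to be the main obstacle, is the nonlinear estimate. Using the inhomogeneous versions of the inequalities above, bounding $\Phi$ reduces to estimating quantities such as $\|D_x^s f(u,v)\|_{L_x^1 L_T^2}$ (and the corresponding ones for the remaining norms) in terms of $\|(u,v)\|_{X_T^s}$. Since $f$ and $g$ from \eqref{nonlinear} are finite sums of monomials $u^{\alpha}v^{\beta}$ with $\alpha+\beta=2k+1$, I would apply the fractional Leibniz rule of Kato--Ponce type to distribute $D_x^s$ over each factor, then Hölder's inequality in the space-time variables, and finally the one-dimensional Sobolev embedding $H^1(\R)\hookrightarrow L^\infty(\R)$, which is precisely where the hypothesis $s\ge1$ is used: it allows the low-order factors to be placed in $L^\infty$ so that the estimate closes. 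The outcome is a bound of the form
\begin{equation*}
\|\Phi(u,v)\|_{X_T^s}\le C\|(u_0,v_0)\|_{H^s}+C\,T^{\theta}\,\|(u,v)\|_{X_T^s}^{2k+1}
\end{equation*}
for some $\theta>0$ extracted from a Hölder estimate in the time variable; this positive power of $T$ is what makes $\Phi$ a contraction on the ball once $T$ is chosen small relative to $M$.

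The difference estimate for $\|\Phi(u_1,v_1)-\Phi(u_2,v_2)\|_{X_T^s}$ is handled in the same manner after writing each difference of monomials as a telescoping sum, and the resulting Lipschitz bound simultaneously yields the contraction, the uniqueness, and the Lipschitz continuity of the data-to-solution map asserted in the theorem. Throughout, the only genuinely delicate part is the bookkeeping: one must keep careful track of which factor of each monomial is placed in which of the four mixed space-time norms so that every term is controlled and a clean positive power of $T$ is produced.
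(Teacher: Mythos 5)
Your proposal follows essentially the same route as the paper, which in fact omits the proof entirely and defers to Theorem 3.1 of \cite{aam}: a contraction mapping argument on the Duhamel formulation in the Banach space built from the Kato smoothing, maximal function, and Strichartz norms of Kenig--Ponce--Vega, closed via the fractional Leibniz rule and the embedding $H^1(\R)\hookrightarrow L^\infty(\R)$. The only slip is the claim $\|\partial_x U(t)\phi\|_{L_T^4L_x^\infty}\lesssim\|\phi\|_{L^2}$; the correct Strichartz bound costs $\|D_x^{3/4}\phi\|_{L^2}$, which is still harmless since $s\geq 1$.
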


By noting that  $f$ and $g$ are homogeneous polynomials of degree $2k+1$, the proof of Theorem \ref{lwpsystem} is similar to that of Theorem 3.1 in \cite{aam}. So we will omit the details.
Once  we know the existence of local solutions, a natural question is about their extension to global ones.
This question is partially answered for solutions in the energy space $H^{1}(\mathbb{R}) \times H^{1}(\mathbb{R})$ in view of the conservation laws. Indeed, it is not difficult to see that system \eqref{systemint}  conserves  the mass and the energy  given, respectively, by
\begin{equation}\label{laws3} 
M (u ,v) = \displaystyle \frac{1}{2} \int_{\mathbb{R}} [u^{2} + v^{2}] dx 
\end{equation}
and
\begin{equation}\label{laws4} 
E (u,v) = \displaystyle \frac{1}{2} \int_{\mathbb{R}} [(\partial_{x}u)^{2} + (\partial_{x}v)^{2} - 2 \mu H(u,v)] dx.
\end{equation}
In addition, since the existence time in Theorem \ref{lwpsystem} depends on the norm of the initial data itself, in order to extend the solution globally-in-time it suffices to establish an a priori bound on $\|\partial_{x}(u, v)(t)\|$, where $\|(\cdot, \cdot)\|$ denotes the $L^{2}(\mathbb{R}) \times L^{2}(\mathbb{R})$-norm. 
Observe that \eqref{laws4} provides
\begin{equation}\label{idderivativeaux} 
\|\partial_{x}(u, v)(t)\|^{2} = 2E(u_{0}, v_{0}) + 2 \mu \int H(u,v)(t) dx.
\end{equation}
As an immediate consequence, in the case   $\displaystyle \mu\int H(u,v) dx < 0$, we have the following result.

\begin{proposition}\label{gwpspecialcase}
Let $(u_{0}, v_{0}) \in  H^{1}(\mathbb{R}) \times H^{1}(\mathbb{R})$. If  $\displaystyle \mu\int H(u,v)dx < 0$ then there exists a unique  solution $(u, v)$ of IVP \eqref{PVIsystem} satisfying
\begin{equation*}
    (u,v) \in \mathcal{C}(\mathbb{R} : H^{1}(\mathbb{R}) \times H^{1}(\mathbb{R})) \cap L^{\infty}(\mathbb{R} : H^{1}(\mathbb{R}) \times H^{1}(\mathbb{R})).
\end{equation*}
\end{proposition}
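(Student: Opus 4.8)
The plan is to upgrade the local solution furnished by Theorem \ref{lwpsystem} (with $s=1$) to a global one via the standard continuation argument. The key structural fact is that the existence time $T$ in Theorem \ref{lwpsystem} depends only on $\|(u_0,v_0)\|_{H^s}$; consequently, if one secures a bound on $\|(u,v)(t)\|_{H^1\times H^1}$ that is uniform over the maximal interval of existence, the solution can be reconstructed on consecutive time intervals of a fixed length and thereby extended to every $t\in\mathbb{R}$. The whole proof thus reduces to producing such a uniform a priori bound.

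To this end I would split the $H^1\times H^1$-norm into its $L^2$ and derivative parts. The $L^2\times L^2$ part is handled directly by mass conservation \eqref{laws3}, which gives $\|(u,v)(t)\|^2 = 2M(u_0,v_0)$ for all $t$ in the existence interval. For the derivative part I would invoke identity \eqref{idderivativeaux} together with the hypothesis $\mu\int H(u,v)\,dx<0$: since the term $2\mu\int H(u,v)(t)\,dx$ is negative, \eqref{idderivativeaux} yields
\begin{equation*}
\|\partial_x(u,v)(t)\|^2 = 2E(u_0,v_0) + 2\mu\int H(u,v)(t)\,dx < 2E(u_0,v_0).
\end{equation*}
Because the energy is conserved and $E(u_0,v_0)$ is finite for data in $H^1\times H^1$, this is a bound independent of $t$. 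Adding the two estimates produces $\|(u,v)(t)\|_{H^1\times H^1}^2 < 2M(u_0,v_0)+2E(u_0,v_0)$, which is the desired uniform control.

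With this bound in hand the continuation argument closes in the usual fashion: if the maximal existence time were finite, the uniformly bounded $H^1\times H^1$-norm would allow a fresh application of Theorem \ref{lwpsystem} near the endpoint with a step depending only on that bound, extending the solution past it and contradicting maximality; the identical reasoning runs backward in time, so the solution exists for all $t\in\mathbb{R}$. Continuity in time with values in $H^1\times H^1$ is inherited from the local theory, the $L^\infty$-in-time bound is precisely the uniform estimate above, and uniqueness follows by gluing the local uniqueness statements of Theorem \ref{lwpsystem}. The only genuine subtlety---and the step I would treat with most care---is that the sign condition $\mu\int H(u,v)\,dx<0$ must hold along the entire flow and not merely at $t=0$; this is exactly what the hypothesis stipulates, so (as the text anticipates) the conclusion is an immediate consequence of the conservation laws, requiring no separate argument to propagate the sign.
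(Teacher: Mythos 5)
Your proposal is correct and follows the same route the paper intends: the identity \eqref{idderivativeaux} together with the sign hypothesis and mass conservation gives the uniform $H^1\times H^1$ bound, and the continuation argument based on the fact that $T$ in Theorem \ref{lwpsystem} depends only on $\|(u_0,v_0)\|_{H^1}$ does the rest. The paper states this as an immediate consequence without writing the details, and your observation that the sign condition must be read as holding along the entire flow (which the subsequent Remark guarantees structurally in the cases of interest) is exactly the right point to flag.
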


\begin{remark}
Assume $\mu=-1$. When $k$ is an even number  and $b$ is null or when  $k$ is  odd and $c=d=0$ we have $\displaystyle \int H(u,v)dx > 0$. In particular, in these cases we see that assumption in Proposition \ref{gwpspecialcase} is fulfilled.
\end{remark}

On the other hand, from Sobolev's embedding and Cauchy-Schwartz's inequality, the following estimate hold
\begin{align}\label{inequalityGNtypeint}
2 \int H(|u|,|v|) dx  \leq C  \|(u,v)\|^{k+2}\|\partial_{x}(u,v)\|^{k},
\end{align}
where $C$ is a positive constant. So, in view of \eqref{idderivativeaux},
$$
\|\partial_{x}(u,v)\|^{2} \leq 2 E(u_{0},v_{0}) + C \|(u,v)\|^{k+2}\|\partial_{x}(u,v)\|^{k}.
$$
Hence, by using a standard argument (see, for instance, \cite[Chapter 6]{linaresponce})  we can establish the existence of global solutions for \eqref{PVIsystem} under certain conditions. More precisely,

\begin{proposition}\label{gwpsystemh1}
Let $(u_{0}, v_{0}) \in H^{1}(\mathbb{R}) \times H^{1}(\mathbb{R})$. Then the solution $(u,v)$ given by Theorem \ref{lwpsystem} can be extended to any interval $[0,T]$,  $T > 0$, under one of the following assumptions:
\begin{enumerate}
\item[(i)]  $k=1$ and no restrictions on the initial data.
\item[(ii)]  $k=2$ and $\|(u_{0}, v_{0})\|$  small enough.
\item[(iii)]  $k>2$ and $\|(u_{0}, v_{0})\|_{H^{1} \times H^{1}}$ small enough.
\end{enumerate}
\end{proposition}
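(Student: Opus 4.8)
The plan is to upgrade the local solution of Theorem \ref{lwpsystem} to a global one by producing a time-uniform a priori bound on $\|\partial_x(u,v)(t)\|$, and then to exploit the fact that the existence time in Theorem \ref{lwpsystem} depends only on the $H^1\times H^1$-norm of the data. First I would use mass conservation \eqref{laws3}: since $M(u,v)$ is conserved, $\|(u,v)(t)\|=\|(u_0,v_0)\|$ throughout the existence interval, so the factor $C_0:=C\|(u_0,v_0)\|^{k+2}$ in \eqref{inequalityGNtypeint} is \emph{constant} along the flow. Combining the energy identity \eqref{idderivativeaux}, the pointwise bound $\mu\int H(u,v)\,dx\le \int H(|u|,|v|)\,dx$ (valid because $a,b,c,d\ge0$ force $|H(u,v)|\le H(|u|,|v|)$, hence $\mu\int H\le|\int H|\le\int H(|u|,|v|)$), and the Gagliardo--Nirenberg type inequality \eqref{inequalityGNtypeint}, I obtain, writing $F(t):=\|\partial_x(u,v)(t)\|^2$ and $E_0:=E(u_0,v_0)$,
\[
F(t)\ \le\ 2E_0+C_0\,F(t)^{k/2},\qquad t\in[0,T_{\max}).
\]
Everything then reduces to analysing this scalar inequality, where the exponent $k/2$ governs the trichotomy.

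For $k=1$ the inequality is quadratic in $z=\sqrt{F(t)}$, namely $z^2-C_0z-2E_0\le0$; since the actual trajectory satisfies it (so the discriminant $C_0^2+8E_0\ge0$ automatically), one gets $F(t)\le\big(\tfrac{1}{2}(C_0+\sqrt{C_0^2+8E_0})\big)^2$ with \emph{no} restriction on the data, which is (i). For $k=2$ it becomes $(1-C_0)F(t)\le 2E_0$, yielding the uniform bound $F(t)\le 2E_0/(1-C_0)$ as soon as $C_0<1$, i.e. as soon as $\|(u_0,v_0)\|$ is small; this gives (ii).

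The case $k>2$, which produces (iii), is the one I expect to be the main obstacle, because the exponent $k/2>1$ makes the inequality \emph{superlinear} and boundedness is no longer automatic; a continuity (barrier) argument is needed. I would set $p(\lambda):=\lambda-C_0\lambda^{k/2}$, so that the estimate reads $p(F(t))\le 2E_0$, and note that $p$ increases from $p(0)=0$ up to a maximum $M^\ast=p(\lambda^\ast)$ at $\lambda^\ast=\big(2/(kC_0)\big)^{2/(k-2)}$ and then decreases. If $\|(u_0,v_0)\|_{H^1\times H^1}=:\varepsilon$ is small, then $C_0$ is small, so both $\lambda^\ast$ and $M^\ast$ are large, while simultaneously $F(0)=\|\partial_x(u_0,v_0)\|^2\le\varepsilon^2$ and $2E_0$ are small; indeed the same estimate at $t=0$ forces $F(0)-C_0F(0)^{k/2}\le 2E_0\le F(0)+C_0F(0)^{k/2}$, so $0<2E_0\lesssim\varepsilon^2$. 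Hence for $\varepsilon$ small one can arrange $F(0)<\lambda^\ast$ and $2E_0<M^\ast$. Since $t\mapsto F(t)$ is continuous (the solution lies in $C([0,T];H^1\times H^1)$) and its image must avoid the interval where $p>2E_0$, connectedness of the image together with $F(0)<\lambda^\ast$ forces $F(t)<\lambda^\ast$ for all $t$, the desired uniform bound.

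In each of the three cases the resulting uniform control of $F(t)$, combined again with mass conservation, gives $\|(u,v)(t)\|_{H^1\times H^1}^2=2M(u_0,v_0)+F(t)\le \mathrm{const}$ independently of $t$. Because the local existence time in Theorem \ref{lwpsystem} depends only on this norm, a standard iteration with uniform time-steps (see \cite[Chapter 6]{linaresponce}) extends the solution to every interval $[0,T]$, $T>0$, completing the proof.
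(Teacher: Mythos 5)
Your proposal is correct and follows essentially the same route as the paper: the paper derives the same a priori inequality $\|\partial_x(u,v)(t)\|^2 \le 2E(u_0,v_0) + C\|(u_0,v_0)\|^{k+2}\|\partial_x(u,v)(t)\|^{k}$ from \eqref{idderivativeaux}, \eqref{inequalityGNtypeint} and mass conservation, and then simply invokes the "standard argument" of \cite[Chapter 6]{linaresponce}, which is exactly the case analysis (quadratic bound for $k=1$, linear absorption for $k=2$, continuity/barrier argument for $k>2$) that you write out in detail. Your $k>2$ barrier argument is moreover the same mechanism the paper later formalizes as Lemma \ref{lemacontinuidade} in the proof of Theorem \ref{teogwpsystem}.
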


 Proposition \ref{gwpsystemh1} is in agreement with the result in \cite[Theorem 4.1]{aam}. Note that in the case $k\geq2$ we always need a smallness assumption on the initial data. As is well-known this is a feature of $L^2$ supercritical dispersive equations. In this paper, our main contribution is to give a more precise description of how small the initial data must be.

Our main result reads as follows (for the precise definition of ground states see Definition \ref{groundstate})

\begin{theorem}[Global well-posedness in $H^{1}(\mathbb{R}) \times H^{1}(\mathbb{R})$]\label{teogwpsystem}
Let $(u_{0},v_{0}) \in H^{1}(\mathbb{R}) \times H^{1}(\mathbb{R}) $ and $k \geq 2$. Suppose that 
\begin{equation}\label{hip1}
M(u_{0},v_{0})^{k+2} E(u_{0},v_{0})^{k-2} < M(\Phi ,\Psi)^{k+2} E(\Phi , \Psi)^{k-2},
\end{equation}
where $(\Phi,\Psi)$ is a \textit{ground-state} solution of the elliptic system 
\begin{equation}\label{sistemaeliptico2}
\begin{cases}
\phi'' - \phi + f(\phi, \psi )= 0,\\
\psi '' - \psi +  g(\phi, \psi )= 0.
\end{cases}
\end{equation} 
 If 
\begin{equation}\label{hip2}
\|\partial_{x}(u_{0}, v_{0})\|^{k-2}\|(u_{0},v_{0})\|^{k+2} < \|\partial_{x}(\Phi, \Psi)\|^{k-2}\|(\Phi ,\Psi)\|^{k+2},
\end{equation}
then as long as the local solution given in Theorem \ref{lwpsystem} exists, it satisfies
\begin{equation}\label{res2}
\|\partial_{x}(u(t), v(t))\|^{k-2}\|(u_{0},v_{0})\|^{k+2} < \|(\partial_{x} (\Phi,  \Psi)\|^{k-2}\|(\Phi ,\Psi)\|^{k+2}.
\end{equation}
In particular, the solution exists globally-in-time in $H^{1}(\mathbb{R}) \times H^{1}(\mathbb{R})$.
\end{theorem}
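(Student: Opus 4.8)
The plan is to run a threshold argument of Holmer--Roudenko type, reducing everything to the behavior of the single scalar $y(t):=\|\partial_x(u(t),v(t))\|^2$ together with a one-variable function built from the sharp constant in \eqref{inequalityGNtypeint}. Throughout I take the focusing case $\mu=1$ (for $\mu=-1$ the term $\int H$ enters \eqref{idderivativeaux} with the favorable sign and Proposition~\ref{gwpspecialcase} already gives global solutions), and I treat $k>2$; the borderline $k=2$ is $L^2$-critical and is disposed of directly, since there the function $\Theta$ below is linear and \eqref{hip1}--\eqref{hip2} reduce to a mass inequality preserved by mass conservation.

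First I would pin down the ground-state quantities. Since any solution of \eqref{sistemaeliptico2} is a free critical point of the action $I(\phi,\psi):=\tfrac12\|\partial_x(\phi,\psi)\|^2+\tfrac12\|(\phi,\psi)\|^2-\int H(\phi,\psi)$, testing $I'(\Phi,\Psi)=0$ against $(\Phi,\Psi)$ and using \eqref{functionh2} gives the Nehari relation $\|\partial_x(\Phi,\Psi)\|^2+\|(\Phi,\Psi)\|^2=(2k+2)\int H(\Phi,\Psi)$, while differentiating along the dilation $\lambda\mapsto(\Phi(\cdot/\lambda),\Psi(\cdot/\lambda))$ gives the Pohozaev relation $\|(\Phi,\Psi)\|^2-\|\partial_x(\Phi,\Psi)\|^2=2\int H(\Phi,\Psi)$. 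Solving the two identities yields
\begin{equation}\label{planGS}
\|(\Phi,\Psi)\|^2=\tfrac{k+2}{k}\|\partial_x(\Phi,\Psi)\|^2,\qquad
E(\Phi,\Psi)=\tfrac{k-2}{2k}\|\partial_x(\Phi,\Psi)\|^2,\qquad
\textstyle\int H(\Phi,\Psi)=\tfrac1k\|\partial_x(\Phi,\Psi)\|^2,
\end{equation}
and, because the best constant in \eqref{inequalityGNtypeint} is attained at $(\Phi,\Psi)$, the explicit value $C_{GN}=\tfrac2k\|\partial_x(\Phi,\Psi)\|^{2-k}\|(\Phi,\Psi)\|^{-(k+2)}$. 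This is where the variational/sharp-constant theory developed in the later sections is invoked, and I expect establishing the sharp constant and its attainment to be the only substantial external input.

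Next I set up the scalar inequality along the flow. Using energy conservation, the termwise bound $\int H(u,v)\le\int H(|u|,|v|)$ (valid since $a,b,c,d\ge0$), the sharp version of \eqref{inequalityGNtypeint}, and mass conservation $\|(u(t),v(t))\|^2=\|(u_0,v_0)\|^2=:N_0$, identity \eqref{idderivativeaux} becomes $y\le 2E_0+B\,y^{k/2}$ with $B:=C_{GN}N_0^{(k+2)/2}$ and $E_0:=E(u_0,v_0)$. Hence $\Theta(y(t))\le 2E_0$ for all $t$, where $\Theta(y):=y-B\,y^{k/2}$. For $k>2$ the function $\Theta$ vanishes at $0$, is positive and increasing up to a unique maximum at $y_\ast:=(2/(kB))^{2/(k-2)}$ with $\max\Theta=\tfrac{k-2}{k}y_\ast$, and then decreases to $-\infty$. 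A short computation using \eqref{planGS} produces the two translations that drive the argument: $y_\ast=\|\partial_x(\Phi,\Psi)\|^2\big(\|(\Phi,\Psi)\|^2/N_0\big)^{(k+2)/(k-2)}$, so that \eqref{hip2} is exactly $y(0)<y_\ast$, and $\max\Theta=2E(\Phi,\Psi)\big(\|(\Phi,\Psi)\|^2/N_0\big)^{(k+2)/(k-2)}$, so that raising to the power $k-2$ (and cancelling the factor coming from $N_0=2M(u_0,v_0)$, $\|(\Phi,\Psi)\|^2=2M(\Phi,\Psi)$) shows \eqref{hip1} is exactly $2E_0<\max\Theta$.

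Finally I would close with a continuity argument. From \eqref{hip2} we have $y(0)<y_\ast$, whence $\Theta(y(0))>0$ and so $0<\Theta(y(0))\le 2E_0<\max\Theta$ by \eqref{hip1}; consequently $\{\,y\ge0:\Theta(y)\le 2E_0\,\}=[0,y_-]\cup[y_+,\infty)$ with $y_-<y_\ast<y_+$, and \eqref{hip2} places $y(0)$ in the lower interval. Since $t\mapsto y(t)$ is continuous on the existence interval (because $(u,v)\in C([0,T];H^1(\mathbb{R})\times H^1(\mathbb{R}))$) and can never enter the forbidden gap $(y_-,y_+)$, where $\Theta>2E_0$, it stays trapped in $[0,y_-]$; thus $y(t)\le y_-<y_\ast$ for all $t$, which is precisely \eqref{res2}. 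In particular $\|\partial_x(u(t),v(t))\|^2<y_\ast$ is a bound uniform in $t$, and combined with the conserved $L^2$-norm it gives an a priori $H^1(\mathbb{R})\times H^1(\mathbb{R})$ bound; the blow-up alternative contained in Theorem~\ref{lwpsystem} then extends the solution to all of $[0,T]$, $T>0$. I expect the only delicate points to be the sharp-constant input in the first step and the verification that the algebraic reformulations of \eqref{hip1} and \eqref{hip2} preserve the strict inequalities; once $\Theta$ and $y_\ast$ are in hand, the trapping step itself is elementary.
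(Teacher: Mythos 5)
Your proposal is correct and follows essentially the same route as the paper: the bound $y\le 2E_0+By^{k/2}$ from the conservation laws and the sharp constant $K_{opt}$, the identification (via the Pohozaev identities) of \eqref{hip2} with $y(0)<y_\ast$ and of \eqref{hip1} with $2E_0<\frac{k-2}{k}y_\ast$, and the continuity/trapping step, which the paper packages as Lemma \ref{lemacontinuidade} with $\gamma=y_\ast$ and $A<(1-\tfrac1m)\gamma$ playing the role of your $2E_0<\max\Theta$. The $k=2$ case is likewise handled directly in both arguments, reducing to $\|(u_0,v_0)\|<\|(\Phi,\Psi)\|$.
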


\begin{remark}
Note that in the case $k=2$ assumptions \eqref{hip1} and \eqref{hip2} reduce to the same one and are equivalent to $\|(u_0,v_0)\|<\|(\Phi,\Psi)\|$. In the case $k>2$,  it must be understood that the energy $E(\Phi,\Psi)$ appearing in \eqref{hip1} is evaluated for $\mu=1$.
\end{remark}

To prove Theorem \ref{teogwpsystem},  we first relate the best constant one can place in inequality \eqref{inequalityGNtypeint} with the problem of existence of \textit{ground state} solutions associated to \eqref{sistemaeliptico2} (see Corollary \ref{teosharpconst}). The main idea is to see the ground states as minima of a Weinstein-type functional. Now a day, many strategies can be used to obtain the minima of such a functional.  Here, we use the ones adopted, for instance in Maia, Montefusco and Pellaci  \cite{ mmontefusco06}, Fanelli and Montefusco  \cite{fanellim2007}, Pastor \cite{pastor15}, Hayashi, Ozawa and Tanaka  \cite{ozawa13}, Noguera and Pastor \cite{normanp2018}, where the authors  established the existence of ground states for coupled nonlinear Schr\"odinger equations and presented sufficient conditions for the global existence related with those equations. 
We also refer to the work of Esfahani and Pastor in \cite{esfahani18}, where the authors studied a generalized Shrira equation.

By setting $v=0$ in \eqref{PVIsystem} we see that the system reduces to the generalized KdV equation
\begin{equation}\label{gkdv}
\partial_{t}u + \partial_{x}^{3}u  + \mu a\partial_{x}(u^{2k+1})=0.
\end{equation}
Equation \eqref{gkdv} together with the Schr\"odinger equation are the most studied dispersive models. Many results concerning local and global well-posedness, asymptotic behavior, and several other properties of the solutions can be found in the current literature, which we refrain from list them at this stage. However, a similar result for \eqref{gkdv} as the one in Theorem \ref{teogwpsystem} was established in \cite{flpastor11}. So, Theorem \ref{teogwpsystem} may also be seen as an extension to that result for system \eqref{PVIsystem}.

As our second main result, we give a suitable characterization of the ground states. Indeed, as we will see in Section \ref{GNsec}, if $(\Phi,\Psi)$ is a  ground state of \eqref{sistemaeliptico2}  then it is nonnegative, that is, $\Phi\geq0$, $\Psi\geq0$. In addition, since the coefficients of $f$ and $g$ are nonnegative,
\begin{equation*}
\begin{cases}
\Phi'' - \Phi = - f(\Phi, \Psi) \leq 0,\\
\Psi'' - \Psi = - g(\Phi, \Psi) \leq 0.
\end{cases}
\end{equation*}
By the maximum principle (see \cite[Theorem 3.5]{gilbarg}) it follows that $\Phi$ is strictly positive or vanishes everywhere. A similar statement holds for $\Psi$. If $\Psi\equiv0$, for instance, then $\Phi$ is a solution of the scalar equation $\Phi''-\Phi+f(\Phi,0)=0$. A natural and interesting question is when the ground states are of the form $(\Phi,0)$ or $(0,\Psi)$, which we will pay particular attention below.

It is well known that equation
\begin{equation}\label{scalarg}
-Q''+Q-Q^{2k+1}=0,
\end{equation}
has a unique ground state (up to translations), which is positive, radially symmetric and has an exponential decay at infinity  (see, for instance, \cite[Chapter 8]{CAZENAVEBOOK}).

Our main result here is the follows.

\begin{theorem}\label{char}
	Let $F(x,y)=(2k+2)H(x,y)$, where $H$ is given in \eqref{functionHint}. Let $Y$ be the set of all points $(x_0,y_0)$ satisfying
	$$
	F(x_0,y_0)=F_{max}:=\max\{F(x,y): \,x^2+y^2=1, x\geq0, y\geq0\}.
	$$
	A pair $(u,v)\in H^1(\R)\times H^1(\R)$ is a nonnegative ground state of \eqref{sistemaeliptico2} if and only if the exists $\alpha,\beta\geq0$ such that $(F_{max})^{\frac{1}{2k}}(\alpha,\beta)\in Y$ and
	$$
	(u,v)=(\alpha Q,\beta Q),
	$$
	where  $Q$ is the ground state of  \eqref{scalarg}.
	
	In particular, uniqueness of the ground states holds provided $Y$ has only one point.
\end{theorem}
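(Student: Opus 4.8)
The plan is to reduce the vectorial problem to the scalar equation \eqref{scalarg} by means of two pointwise inequalities, and then to read off the ground states from the equality cases. Since $F=(2k+2)H$ is homogeneous of degree $2k+2$, for all $x,y\geq0$ one has $F(x,y)\leq F_{max}(x^2+y^2)^{k+1}$, with equality precisely when $(x,y)/\sqrt{x^2+y^2}\in Y$. Writing $w=\sqrt{u^2+v^2}$, so that $\|w\|=\|(u,v)\|$, and invoking the diamagnetic-type inequality $(w')^2\leq(u')^2+(v')^2$ together with the sharp scalar Gagliardo-Nirenberg inequality $\int w^{2k+2}\,dx\leq C_0\|w'\|^{k}\|w\|^{k+2}$ (whose best constant $C_0$ is attained by $Q$), I would chain these estimates into
\[
2\int_{\R}H(|u|,|v|)\,dx\;\leq\;\frac{F_{max}C_0}{k+1}\,\|\partial_x(u,v)\|^{k}\|(u,v)\|^{k+2}.
\]
This identifies the sharp constant in \eqref{inequalityGNtypeint} as $F_{max}C_0/(k+1)$ and, through Corollary \ref{teosharpconst}, ties its extremizers to the ground states of \eqref{sistemaeliptico2}. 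It then remains to identify the extremizers with the family $(\alpha Q,\beta Q)$.

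For sufficiency, fix $\alpha,\beta\geq0$ with $(x_0,y_0):=(F_{max})^{1/2k}(\alpha,\beta)\in Y$. Since $f$ and $g$ are homogeneous of degree $2k+1$ and $Q$ solves $Q''-Q=-Q^{2k+1}$, the pair $(\alpha Q,\beta Q)$ solves \eqref{sistemaeliptico2} if and only if $\alpha=f(\alpha,\beta)$ and $\beta=g(\alpha,\beta)$. Using the relations $f=\frac{1}{2k+2}F_x$ and $g=\frac{1}{2k+2}F_y$ and the Lagrange identity $\nabla F(x_0,y_0)=(2k+2)F_{max}(x_0,y_0)$ valid at a constrained maximizer (its multiplier being computed via Euler's relation $x_0F_x+y_0F_y=(2k+2)F_{max}$, the identity persisting at maximizers on the axes by the nonnegativity of the coefficients), a short homogeneity computation together with the rescaling by $(F_{max})^{-1/2k}$ produces exactly these two identities. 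Moreover $(\alpha Q,\beta Q)$ attains equality in each step of the chain above: in the homogeneity bound because $(\alpha,\beta)/\sqrt{\alpha^2+\beta^2}=(x_0,y_0)\in Y$, in the diamagnetic inequality because $(\alpha Q,\beta Q)$ is everywhere parallel to its derivative, and in the scalar inequality because $w$ is a multiple of $Q$. Hence it is an extremizer of \eqref{inequalityGNtypeint}, and therefore a ground state.

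For necessity, take a nonnegative ground state $(u,v)$, which by Corollary \ref{teosharpconst} is an extremizer of \eqref{inequalityGNtypeint}, and analyze the three equalities simultaneously. Equality in the scalar inequality forces $w=\sqrt{u^2+v^2}$ to be a scaling and translate of $Q$; in particular $w>0$ on all of $\R$. Writing $u=w\cos\phi$ and $v=w\sin\phi$, equality in the diamagnetic inequality gives $uv'-u'v=w^2\phi'=0$, so the direction $\phi$ is constant; and equality in the homogeneity bound forces $(\cos\phi,\sin\phi)\in Y$. Thus $(u,v)=\rho(x_0,y_0)Q=(\alpha Q,\beta Q)$ for a fixed $(x_0,y_0)\in Y$, and imposing that $(\alpha Q,\beta Q)$ actually solve \eqref{sistemaeliptico2}, as a ground state must, forces $\rho^{2k}=1/F_{max}$, which is exactly the normalization $(F_{max})^{1/2k}(\alpha,\beta)\in Y$. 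The main obstacle I anticipate is this equality analysis: the diamagnetic inequality must be exploited on the connected set $\{w>0\}$ in order to legitimately conclude that the ratio of the two components is globally constant, and the rigidity of the scalar extremizer --- uniqueness of $Q$ up to scaling and translation --- must be invoked to finish. Keeping precise track of the amplitude normalization that turns a scale-invariant extremizer into a genuine solution of \eqref{sistemaeliptico2} is the other delicate point, and the whole statement is naturally understood modulo a common translation of $Q$.
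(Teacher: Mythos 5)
Your proposal is correct, but it is organized around a genuinely different pivot than the paper's proof. You characterize the nonnegative ground states as the extremizers of the vectorial Gagliardo--Nirenberg inequality \eqref{inequalityGNtypeint} (via Proposition \ref{propIandsolution} and Corollary \ref{teosharpconst}), factor that inequality through the scalar one by chaining the homogeneity bound $F(x,y)\leq F_{max}(x^2+y^2)^{k+1}$ with the pointwise bound $(w')^2\leq (u')^2+(v')^2$ for $w=\sqrt{u^2+v^2}$, and then read everything off the three equality cases. The paper uses the same two pointwise inequalities, but inside a different variational frame: Section \ref{secchar} first proves (Lemma \ref{equilam}, Proposition \ref{propcar}) that ground states are exactly the minimizers of $S$ subject to $\widetilde{P}=\lambda_1$, then uses the two inequalities to show that $(U x_0,U y_0)$ is itself such a minimizer, extracts the scalar ODE $U''-U+F_{max}U^{2k+1}=0$ directly from its Euler--Lagrange equations, and only then invokes uniqueness of the ground state of \eqref{scalarg}. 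The practical difference is the black box each route imports: the paper needs only uniqueness of the positive solution of \eqref{scalarg}, whereas you additionally need Weinstein's rigidity theorem for the sharp scalar Gagliardo--Nirenberg inequality (all extremizers are amplitude/dilation rescalings of translates of $Q$) --- standard, but a strictly stronger input, and your necessity step should carry the dilation parameter explicitly until the PDE kills it along with the amplitude (you suppress it when writing $(u,v)=\rho(x_0,y_0)Q$). In return, your use of the equality case of the Cauchy--Schwarz/diamagnetic step to force $\phi'\equiv0$ on $\{w>0\}$ gives a cleaner justification that the direction $(z,w)$ is a single point of $Y$ than the paper's passage from $F(z(x),w(x))=F_{max}$ a.e.\ to constancy, which is delicate if $Y$ is not discrete. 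Your sufficiency argument (Lagrange/KKT identity $\nabla F(x_0,y_0)=(2k+2)F_{max}(x_0,y_0)$, hence $f(\alpha,\beta)=\alpha$, $g(\alpha,\beta)=\beta$) coincides with the paper's, and your final appeal to ``solution $+$ extremizer $\Rightarrow$ ground state'' is legitimately covered by Proposition \ref{propIandsolution}.
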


The proof Theorem \ref{char} relies on an extension of the arguments in \cite{correia}. In particular it heavily depends on the fact that $H$ is an homogeneous function. Thus, the ground states can also be viewed as minima of another suitable minimization problem.

Observe that system \eqref{sistemaeliptico2} appears when we look for solitary waves (with velocity one) of system in \eqref{PVIsystem} with $\mu=1$. Indeed, a solitary-wave solution of \eqref{PVIsystem} is a solution having the form $u(x,t)=\phi(x-\omega t)$, $v(x,t)=\psi(x-\omega t)$, where $\omega$ is a real constant representing the velocity of the traveling wave. By substituting this form in \eqref{PVIsystem}, with $\omega=1$, we promptly see that $(\phi,\psi)$ must satisfy \eqref{sistemaeliptico2}.  Another question of our interest here concerns the orbital stability/instability of the solitary waves. As we will see in Section \ref{secinst} at least under some restrictions on the coefficients appearing in the definition of $H$ we are able to establish the orbital instability.

The paper is organized as follows. In Section \ref{preli} we introduce some notations and recall some standard results which we use along the paper. In Section \ref{GNsec} we prove the existence of ground state solutions associated with system \eqref{sistemaeliptico2}. As a consequence we also obtain a sharp Gagliardo-Nirenberg inequality. Section \ref{globalsec} is devoted to prove Theorem \ref{teogwpsystem}. In Section \ref{secchar} we give our characterization of the ground states by proving Theorem \ref{char}. Finally, in Section \ref{secinst} we establish our instability result.

\section{Notation and preliminaries}\label{preli}
In this section we list some  notation that will be used in this work. We also recall some basic results that will be used along the paper. Given a measurable set $\Omega\subset\R^n$, $|\Omega|$ denotes its Lebesgue measure.  Given a function $f$ and a number $\lambda>0$, the sets $\{x \in \mathbb{R}^n : f(x) \neq 0 \}$ and $\{x\in\R^n : |f(x)|>\lambda\}$ will be denoted, respectively, by $\{f\neq0\}$ and $\{|f|>\lambda\}$.

The standard Lebesgue spaces will be denoted by $L^p(\R^n)$, $1\leq p\leq +\infty$.
For $s\in\R$, by $H^{s}(\mathbb{R}^n)$ we denote the $L^{2}$-based Sobolev space of order $s$ with norm
\begin{equation*}
\|f\|_{H^{s}}= \| \left< \xi \right>^{s} \widehat{f}\|_{L^{2}},
\end{equation*}
where $\left< \xi \right> =  1 + |\xi|$ and $\widehat{f}$ denotes the Fourier transform of $f=f(x)$.  For $s\in\R$, the operator $D^s_x$ is defined via its Fourier transform as $\widehat{D^s_xf}(\xi)=|\xi|^s\widehat{f}(\xi)$. To simplify notation, we use $\|\cdot\|$ to denote the norm in $L^2(\R)\times L^2(\R)$, that is, $\|(u,v)\|^2=\|u\|_{L^2}^2+\|v\|_{L^2}^2$. The notation $\int fdx$ always means $\int_{\R}f(x)dx$. In general $C$ denotes a constant that may vary from one inequality to another.

Now, we give some results necessary for future statements. These results are not new and can be found in the current literature. As we will see below our arguments to prove the existence of ground states will be based on the Mountain Pass Theorem without the Palais-Smale condition which reads as follows.

\begin{theorem}[Mantain Pass Theorem]\label{PSseq}
Let $Y$ be a Hilbert space and $\varphi \in C^{2}(Y, \mathbb{R})$. If there exist $\widetilde{u} \in Y$ and $r>0$ such that $\|\widetilde{u}\|_{Y} > r$ and
$$
\displaystyle \sigma := \inf_{\|u\|_{Y} = r} \varphi(u) > \varphi(0) \geq \varphi(\widetilde{u})
$$ 
then there exists a sequence $(u_{n}) \subset Y$ satisfying
\begin{flalign}
\label{PS1}  & \ \varphi(u_{n}) \rightarrow \omega,&\\
\label{PS2}  & \ \varphi '(u_{n}) \rightarrow 0 \ \ \mbox{strongly in } \ \ Y', &
\end{flalign}
where  $ \displaystyle \omega = \inf_{\gamma \in \Gamma} \max_{t \in[0,1]} \varphi (\gamma (t) )$ and
$$
\displaystyle \Gamma = \{ \gamma \in C([0,1], Y) \, : \, \gamma(0) =0 \,\, \mbox{and} \,\, \varphi(\gamma(1)) < 0 \}.
$$
\end{theorem}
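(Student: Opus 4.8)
The plan is to run the classical Ambrosetti--Rabinowitz argument: I will produce the Palais--Smale sequence at the minimax level $\omega$ by contradiction, using a quantitative deformation lemma. The hypothesis $\varphi\in C^{2}(Y,\mathbb{R})$ is exactly what allows me to work with a genuine negative gradient flow instead of an abstract pseudo-gradient field. First I would verify that $\omega$ is finite and satisfies the separation $\omega\geq\sigma>\varphi(0)$. Finiteness comes from $\Gamma\neq\emptyset$: the segment $t\mapsto t\widetilde u$ joins $0$ to $\widetilde u$, and since $\|\widetilde u\|_{Y}>r$ with $\varphi(\widetilde u)\leq\varphi(0)$, it provides an admissible path (after a harmless prolongation to reach a strictly negative value, if needed). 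The bound $\omega\geq\sigma$ is the mountain pass \emph{linking}: for any $\gamma\in\Gamma$ one has $\|\gamma(0)\|_{Y}=0<r$, while the endpoint satisfies $\varphi(\gamma(1))<0<\sigma\leq\inf_{\|u\|_{Y}=r}\varphi$; the standard crossing argument then forces $t\mapsto\gamma(t)$ to meet the sphere $\{\|u\|_{Y}=r\}$, on which $\varphi\geq\sigma$, so that $\max_{t}\varphi(\gamma(t))\geq\sigma$. Taking the infimum over $\Gamma$ gives $\omega\geq\sigma>\varphi(0)$; in particular $\sigma>0$ since $\varphi(0)\geq0$ in this setting.

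Next I would argue by contradiction. Suppose no Palais--Smale sequence at level $\omega$ exists; otherwise, for each $n$, one could pick $u_{n}$ with $|\varphi(u_{n})-\omega|<1/n$ and $\|\varphi'(u_{n})\|_{Y'}<1/n$, i.e. exactly such a sequence. Hence there are $\varepsilon_{0},\delta>0$ with
\[
\|\varphi'(u)\|_{Y'}\geq\delta\qquad\text{for all } u\in\mathcal{A}:=\{u\in Y:\ |\varphi(u)-\omega|\leq 2\varepsilon_{0}\}.
\]
I then shrink $\varepsilon\in(0,\varepsilon_{0}]$ so that, in addition, $\omega-2\varepsilon>\max\{0,\varphi(0)\}$, which is possible by the separation just established.

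The heart of the proof is the deformation. Because $\varphi\in C^{2}$, the derivative $\varphi':Y\to Y'$ is locally Lipschitz, so the Riesz representation yields a locally Lipschitz gradient $\nabla\varphi:Y\to Y$ with $\|\nabla\varphi(u)\|_{Y}=\|\varphi'(u)\|_{Y'}$. I would fix a locally Lipschitz cutoff $\chi:Y\to[0,1]$ equal to $1$ on $\mathcal{B}:=\{|\varphi-\omega|\leq\varepsilon\}$ and to $0$ outside $\mathcal{A}$, and consider the bounded, locally Lipschitz field
\[
W(u)=-\,\chi(u)\,\frac{\nabla\varphi(u)}{\max\{1,\|\nabla\varphi(u)\|_{Y}\}},
\]
whose global flow $\sigma(s,\cdot)$ is a one-parameter family of homeomorphisms. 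Writing $\eta:=\sigma(s_{0},\cdot)$ for a flow time $s_{0}$ to be chosen, the standard properties follow: $\eta(u)=u$ whenever $u\notin\mathcal{A}$ (there $W\equiv0$); the map $s\mapsto\varphi(\sigma(s,u))$ is nonincreasing; and on $\mathcal{B}$ one has $\frac{d}{ds}\varphi(\sigma(s,u))\leq-\kappa$ with $\kappa:=\min\{\delta,\delta^{2}\}>0$. Integrating this decay over $s_{0}:=2\varepsilon/\kappa$, and noting that a trajectory leaving $\mathcal{B}$ from below already has energy $\leq\omega-\varepsilon$ and keeps it, yields the crucial conclusion
\[
\eta\big(\{\varphi\leq\omega+\varepsilon\}\big)\subset\{\varphi\leq\omega-\varepsilon\}.
\]

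Finally I would close the contradiction. By definition of $\omega$ as an infimum, choose $\gamma\in\Gamma$ with $\max_{t}\varphi(\gamma(t))\leq\omega+\varepsilon$ and set $\widetilde\gamma:=\eta\circ\gamma$. This $\widetilde\gamma$ is continuous; its initial point is $\eta(0)=0$ because $\varphi(0)<\omega-2\varepsilon$ places $0$ outside $\mathcal{A}$; and its endpoint satisfies $\varphi(\widetilde\gamma(1))\leq\varphi(\gamma(1))<0$ since $\varphi$ is nonincreasing along the flow. Hence $\widetilde\gamma\in\Gamma$, yet $\max_{t}\varphi(\widetilde\gamma(t))\leq\omega-\varepsilon<\omega$, contradicting $\omega=\inf_{\gamma\in\Gamma}\max_{t}\varphi(\gamma(t))$. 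Therefore a sequence satisfying \eqref{PS1}--\eqref{PS2} must exist. The main obstacle is the deformation lemma itself, namely constructing the flow with the quantitative decay and keeping track of the $\varepsilon$--$2\varepsilon$ levels; the $C^{2}$ hypothesis simplifies this by making $\nabla\varphi$ locally Lipschitz, so no pseudo-gradient machinery is needed, and everything else reduces to the linking geometry and routine continuity checks.
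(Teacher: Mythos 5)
Your overall strategy---negate the existence of a $(PS)_{\omega}$-sequence, build a truncated locally Lipschitz gradient flow (legitimate here because $\varphi\in C^{2}$ on a Hilbert space, so $\nabla\varphi$ is locally Lipschitz), establish the quantitative deformation property $\eta\big(\{\varphi\leq\omega+\varepsilon\}\big)\subset\{\varphi\leq\omega-\varepsilon\}$, and push down an almost optimal path to contradict the definition of $\omega$---is exactly the classical argument behind the result the paper invokes; the paper itself offers no proof and simply cites Theorem 1.15 of \cite{willen}. The deformation half of your write-up is essentially sound (up to a small inconsistency: your cutoff vanishes outside $\mathcal{A}=\{|\varphi-\omega|\leq 2\varepsilon_{0}\}$, but you only arrange $\varphi(0)<\omega-2\varepsilon$, so you should shrink the support of $\chi$ to $\{|\varphi-\omega|\leq 2\varepsilon\}$ before concluding $\eta(0)=0$).

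The genuine gap is the linking step, i.e.\ your claim that $\omega\geq\sigma$; what your contradiction argument really needs is its consequence $\omega>\varphi(0)$, which is what lets you fix the base point of the path. You argue that $\varphi(\gamma(1))<0<\sigma\leq\inf_{\|u\|_{Y}=r}\varphi$ forces $\gamma$ to meet the sphere $\{\|u\|_{Y}=r\}$. But $\varphi(\gamma(1))<\sigma$ only says $\gamma(1)$ is not \emph{on} the sphere; with the class $\Gamma$ used in this paper (endpoint constrained only by $\varphi(\gamma(1))<0$, not by $\gamma(1)=\widetilde{u}$ as in \cite{willen}), the endpoint may lie \emph{inside} the ball, and then $\gamma$ need never cross the sphere. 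Nothing in the hypotheses forbids $\varphi$ from being negative inside the ball: on $Y=\mathbb{R}$ take $\varphi$ of class $C^{2}$ with $\varphi(0)=0$, $\varphi$ strictly decreasing on $[0,\tfrac12]$ with $\varphi(\tfrac12)=-10$, $\varphi(\pm1)\geq 1$, $\varphi(2)=0$, and $|\varphi'|$ bounded below outside a compact set. The geometry of Theorem \ref{PSseq} holds with $r=1$, $\widetilde{u}=2$, yet the segment from $0$ to $\tfrac12$ belongs to $\Gamma$ and gives $\omega=0=\varphi(0)$, and one can arrange $|\varphi'|\geq c>0$ on $\{|\varphi|\leq\varepsilon\}$, so no $(PS)_{\omega}$-sequence exists. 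In particular your auxiliary claim that ``$\sigma>0$ since $\varphi(0)\geq0$ in this setting'' has no basis in the stated hypotheses---it is imported from the application. The repair is either to work with the fixed-endpoint class $\Gamma=\{\gamma:\gamma(0)=0,\ \gamma(1)=\widetilde{u}\}$ of the cited theorem, for which the intermediate value theorem does give the crossing, or to add the hypothesis $\varphi\geq0$ on the closed ball $\{\|u\|_{Y}\leq r\}$ (which is what actually holds for the functional $I$ in Proposition \ref{existencePS}, by \eqref{pa1}); either assumption places every admissible endpoint outside the ball and makes the rest of your argument go through verbatim.
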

\begin{proof}  See Theorem 1.15 in \cite{willen}.
\end{proof}

A sequence $(u_n)$ satisfying \eqref{PS1} and \eqref{PS2} will be called a \textit{$(PS)_{\omega}$-sequence} for the functional $\varphi$.  Next we recall two important inequalities we will use below. 

\begin{proposition}[Faber-Krahn's inequality] \label{faber}
Assume that $u \in H^{1}(\mathbb{R}^{n})$ satisfies $0 <  \left| \{  u  \neq   0 \} \right| < \infty.$
Then, there is $C  >  0$ such that
\begin{equation*}
\|u\|_{L^{2}(\mathbb{R}^{n})}^{2} \leq C |\{ u   \neq  0\}|^{\frac{2}{n}}\|\nabla u\|_{L^{2}(\mathbb{R}^{n})}^{2}.
\end{equation*}
\end{proposition}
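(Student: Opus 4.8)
The plan is to deduce this Faber–Krahn bound from the $L^1$ form of the Gagliardo–Nirenberg–Sobolev inequality applied to $u^2$, combined with H\"older's inequality on the support $\Omega:=\{u\neq0\}$. By hypothesis $0<|\Omega|<\infty$. I would start from the classical inequality
$$
\|w\|_{L^{\frac{n}{n-1}}(\R^n)}\leq C_n\|\nabla w\|_{L^1(\R^n)},\qquad w\in W^{1,1}(\R^n),
$$
valid for every $n\geq1$ (with the convention $\frac{n}{n-1}=\infty$ when $n=1$), whose sharp constant is the isoperimetric constant of $\R^n$.

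The key step is to apply this to $w=u^2$. Since $u\in H^1(\R^n)$, the product rule for Sobolev functions gives $\nabla(u^2)=2u\,\nabla u$, and Cauchy–Schwarz yields $u\,\nabla u\in L^1$ with
$$
\|\nabla(u^2)\|_{L^1}\leq 2\|u\|_{L^2}\|\nabla u\|_{L^2}<\infty,
$$
so $u^2\in W^{1,1}(\R^n)$ and the displayed Sobolev inequality applies to it. Next I would invoke H\"older's inequality on $\Omega$ with the conjugate exponents $\frac{n}{n-1}$ and $n$, using that $u$ vanishes a.e.\ outside $\Omega$:
$$
\|u\|_{L^2}^2=\int_{\Omega}u^2\,dx\leq |\Omega|^{\frac1n}\,\|u^2\|_{L^{\frac{n}{n-1}}(\R^n)}.
$$
Chaining the two estimates gives $\|u\|_{L^2}^2\leq 2C_n\,|\Omega|^{\frac1n}\,\|u\|_{L^2}\|\nabla u\|_{L^2}$, and since $|\Omega|>0$ forces $\|u\|_{L^2}>0$, I may divide by $\|u\|_{L^2}$ and square to obtain the claim with $C=4C_n^2$.

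The only genuinely delicate point is the justification that $u^2\in W^{1,1}(\R^n)$ with the stated gradient and that $u^2\in L^{\frac{n}{n-1}}(\R^n)$, so that the Sobolev inequality may legitimately be invoked. I would settle this by a standard density argument, approximating $u$ in $H^1$ by $C_c^\infty$ functions, for which the chain rule and the Sobolev inequality hold verbatim, and then passing to the limit; I expect this closure step to be the main obstacle, the rest being a routine interpolation.

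As an alternative that explains the name of the inequality, one may instead symmetrize: by the P\'olya–Szeg\H{o} inequality the symmetric decreasing rearrangement $u^*$ satisfies $\|u^*\|_{L^2}=\|u\|_{L^2}$ and $\|\nabla u^*\|_{L^2}\leq\|\nabla u\|_{L^2}$, and is supported in the ball of volume $|\Omega|$; applying the sharp Poincar\'e inequality on that ball (whose constant is the first Dirichlet eigenvalue, scaling like $|\Omega|^{-2/n}$) yields the same bound.
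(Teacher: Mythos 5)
Your proof is correct, but it runs through a different key inequality than the paper's. The paper's argument is a two-line interpolation: H\"older on the support, $\|u\|_{L^2}^2\leq|\{u\neq0\}|^{\frac{q-2}{q}}\|u\|_{L^q}^2$, followed by the $L^2$-based Gagliardo--Nirenberg inequality $\|u\|_{L^q}\leq C\|u\|_{L^2}^{1-\theta}\|\nabla u\|_{L^2}^{\theta}$ with $\theta=n\bigl(\tfrac12-\tfrac1q\bigr)$, after which the factor $\|u\|_{L^2}^{2(1-\theta)}$ is absorbed and the identity $\frac{q-2}{q}=\frac{2\theta}{n}$ gives the exponent $\frac2n$. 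You instead invoke the endpoint $W^{1,1}$ Sobolev (isoperimetric) inequality applied to $u^2$ -- Nirenberg's trick -- and then H\"older with exponents $\frac{n}{n-1}$ and $n$; unwinding your argument, it amounts to the paper's computation with the specific choice $q=\frac{2n}{n-1}$, $\theta=\frac12$, except that you prove the needed interpolation estimate from scratch rather than quoting it. What your route buys is self-containedness (only the $L^1$ Sobolev inequality is used as a black box) and an explicit constant $4C_n^2$ tied to the isoperimetric constant; what it costs is the extra justification that $u^2\in W^{1,1}(\mathbb{R}^n)$ with $\nabla(u^2)=2u\nabla u$, which you correctly identify and close by density (note $\|u_k^2-u^2\|_{L^1}\leq\|u_k-u\|_{L^2}\|u_k+u\|_{L^2}$ makes the limit passage routine). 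The paper's choice of a free exponent $q$ also quietly sidesteps the constraint $\theta\leq1$, i.e.\ $q\leq\frac{2n}{n-2}$ for $n\geq3$, which your fixed subcritical choice satisfies automatically. Your symmetrization alternative (P\'olya--Szeg\H{o} plus the scaling of the first Dirichlet eigenvalue) is also valid and is the classical argument behind the name Faber--Krahn, but it is genuinely heavier machinery than either of the two elementary routes.
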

\begin{proof}
From H\"older and Gagliardo-Nirenberg's inequalities, for any $q>2$,
\[
\begin{split}
\|u\|_{L^2}^2&\leq |\{u\neq0\}|^{\frac{q-2}{q}}\|u\|_{L^q}^2\\
& \leq C|\{u\neq0\}|^{\frac{q-2}{q}}\|u\|_{L^2}^{2(1-\theta)}\|\nabla u\|_{L^2}^{2\theta}, \qquad \theta=n\left(\frac{1}{2}-\frac{1}{q}\right).
\end{split}\]
Since $\frac{q-2}{q}=\frac{2\theta}{n}$ the result then follows.
\end{proof}

\begin{proposition}[Chebyshev's inequality]\label{chebyshev}
If $0 < p < \infty$, then for any $\lambda>0$,
$$
\|f\|_{L^{p}}^{p} \geq \lambda^{p} | \{|f(x)| > \lambda \}|.
$$
\end{proposition}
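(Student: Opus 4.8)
The plan is to exploit the trivial pointwise bound available on the superlevel set together with monotonicity of the Lebesgue integral. Write $A_\lambda=\{|f(x)|>\lambda\}$ for the set in question, which is measurable since $f$ is. The key observation is that on $A_\lambda$ one has $|f(x)|>\lambda$, and hence $|f(x)|^p>\lambda^p$ for every $x\in A_\lambda$, because $t\mapsto t^p$ is strictly increasing on $[0,\infty)$ for $p>0$.

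With this in hand the argument reduces to a short chain of inequalities. First I would discard the contribution coming from outside $A_\lambda$, using that the integrand $|f|^p$ is nonnegative:
$$
\|f\|_{L^{p}}^{p}=\int |f(x)|^{p}\,dx\geq\int_{A_\lambda}|f(x)|^{p}\,dx.
$$
Next I would insert the pointwise lower bound $|f(x)|^{p}\geq\lambda^{p}$ valid on $A_\lambda$ and integrate the resulting constant over that set, obtaining
$$
\int_{A_\lambda}|f(x)|^{p}\,dx\geq\int_{A_\lambda}\lambda^{p}\,dx=\lambda^{p}\,|A_\lambda|=\lambda^{p}\,|\{|f(x)|>\lambda\}|,
$$
which is precisely the claimed inequality.

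There is essentially no genuine obstacle here, as every step is a direct consequence of the monotonicity and additivity of the integral; the only points deserving a word of care are purely formal. One should note that the estimate is valid whether or not $|A_\lambda|$ is finite: if $|A_\lambda|=+\infty$ then the right-hand side equals $+\infty$ and the chain above forces $\|f\|_{L^{p}}^{p}=+\infty$ as well, so the inequality holds in $[0,+\infty]$. Likewise, replacing the strict inequality defining $A_\lambda$ by a non-strict one does not affect the conclusion, since it only enlarges the set by a collection of points on which $|f|^{p}=\lambda^{p}$, leaving both sides of the bound consistent.
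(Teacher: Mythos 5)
Your proof is correct and complete: restricting the integral of $|f|^p$ to the superlevel set $A_\lambda=\{|f|>\lambda\}$, applying the pointwise bound $|f|^p\geq\lambda^p$ there, and integrating the constant is exactly the standard argument for the Chebyshev (Markov) inequality, and your remarks about the case $|A_\lambda|=+\infty$ and about strict versus non-strict inequalities are accurate. The paper itself gives no argument at all for this proposition --- it simply cites Theorem 6.17 of Folland --- so your proposal supplies the self-contained elementary proof that the cited reference contains; there is nothing to object to.
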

\begin{proof}
See Theorem 6.17 in \cite{folland1}.
\end{proof}

The proof of Theorem \ref{teogwpsystem} will be based on a continuity argument. To simplify the exposition we recall the following.

\begin{lemma}\label{lemacontinuidade}
	Let $I  \subset \mathbb{R}$ an open interval containing $0$. Let $m >1$, $B > 0$ and $A$ be  real constants. Define $\gamma = (Bm)^{-\frac{1}{m-1}}$ and $f(r) = A- r + Br^{m}$, for $r \geq 0$. Let $G(t)$ be a continuous nonnegative function on $I$. Assume that $A < \left( 1- \frac{1}{m}\right)\gamma$ and $f\circ G \geq 0$.
	\begin{itemize}
		\item [(i)] If $G(0) < \gamma$, then $G(t) < \gamma $, for any $t \in I$.
		\item [(ii)] If $G(0) > \gamma$, then $G(t) > \gamma$, for any $t \in I$.
	\end{itemize}
\end{lemma}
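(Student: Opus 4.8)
The plan is to prove this as an elementary calculus lemma about the scalar function $f(r)=A-r+Br^m$, treating the two cases as barrier arguments. The key observation is that $\gamma=(Bm)^{-1/(m-1)}$ is precisely the location of the unique interior critical point of $f$ on $(0,\infty)$: since $f'(r)=-1+Bmr^{m-1}$, we have $f'(r)=0$ exactly at $r=\gamma$, with $f'<0$ on $(0,\gamma)$ and $f'>0$ on $(\gamma,\infty)$. Hence $\gamma$ is the global minimizer of $f$ on $[0,\infty)$, and the minimum value is
\begin{equation*}
f(\gamma)=A-\gamma+B\gamma^m=A-\gamma+\frac{1}{m}\gamma=A-\Bigl(1-\frac{1}{m}\Bigr)\gamma,
\end{equation*}
where I have used $B\gamma^m=B\gamma^{m-1}\cdot\gamma=\frac{1}{m}\gamma$, which follows directly from $Bm\gamma^{m-1}=1$. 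The hypothesis $A<\bigl(1-\frac{1}{m}\bigr)\gamma$ is therefore exactly the statement that $f(\gamma)<0$.

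Next I would record the sign structure of $f$ that this produces. Since $f$ is strictly decreasing on $[0,\gamma]$ and strictly increasing on $[\gamma,\infty)$ with $f(\gamma)<0$, and since $f(0)=A$ while $f(r)\to+\infty$ as $r\to\infty$, the function $f$ has exactly two zeros $r_1<\gamma<r_2$ (the value $f(0)=A$ may have either sign, but in all cases $f<0$ precisely on the open interval $(r_1,r_2)$ and $f\geq0$ off it, with $\gamma\in(r_1,r_2)$). The crucial consequence for the argument is this: the set $\{r\geq0:f(r)\geq0\}$ is disconnected, consisting of $[0,r_1]$ (possibly empty if $A<0$) to the left of $\gamma$ and $[r_2,\infty)$ to the right of $\gamma$, and neither of these two pieces contains $\gamma$.

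Now the continuity argument runs as follows. By assumption $G$ is continuous and nonnegative on the interval $I$, and $f\circ G\geq0$ everywhere on $I$; that is, $G(t)$ lands in the set $\{f\geq0\}$ for every $t\in I$. Consider case (i), where $G(0)<\gamma$. Then $G(0)\in[0,r_1]$, the left component. Suppose for contradiction that $G(t_*)\geq\gamma$ for some $t_*\in I$; since $G(0)<\gamma$ and $f(\gamma)<0$ forces $G(t)\neq\gamma$ for all $t$, we would in fact have $G(t_*)>\gamma$, i.e. $G(t_*)\in[r_2,\infty)$. Because $I$ is an interval and $G$ is continuous, the Intermediate Value Theorem applied to $G$ between $0$ and $t_*$ would produce some intermediate $t$ with $G(t)=\gamma$, contradicting $f(G(t))\geq0$ (as $f(\gamma)<0$). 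Hence $G(t)<\gamma$ for all $t\in I$, proving (i). Case (ii) is symmetric: if $G(0)>\gamma$ then $G(0)\in[r_2,\infty)$, and any $t$ with $G(t)<\gamma$ would again force a crossing of the forbidden level $\gamma$ by the IVT, so $G(t)>\gamma$ throughout.

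I expect the only genuine point requiring care — rather than a real obstacle — to be the bookkeeping that $\gamma$ itself is excluded as a value of $G$. This is immediate from $f(\gamma)<0$ together with $f\circ G\geq0$, but it is what upgrades the weak inequalities in the hypotheses ($G(0)<\gamma$, resp. $G(0)>\gamma$) to the strict conclusions and makes the two components of $\{f\geq0\}$ genuinely separated by the continuous trajectory. Everything else is the standard connectedness principle: a continuous path starting in one component of a set and forbidden (by $f\circ G\geq0$) from leaving that set cannot jump to the other component across the gap containing $\gamma$.
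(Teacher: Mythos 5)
Your proof is correct and complete; the key identity $f(\gamma)=A-\bigl(1-\frac{1}{m}\bigr)\gamma<0$ followed by the intermediate value theorem argument (the trajectory of $G$ cannot cross the forbidden level $\gamma$) is exactly the standard argument for lemmas of this type. The paper itself gives no proof, deferring to Lemma 3.1 of the cited reference of Pastor, where the same continuity/barrier argument is used, so your write-up matches the intended proof and in fact supplies the details (including the careful remark that $f(\gamma)<0$ excludes $\gamma$ as a value of $G$, which upgrades the conclusion to a strict inequality).
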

\begin{proof}
	See Lemma 3.1 in \cite{pastor15}.
\end{proof}

\section{Gagliardo-Nirenberg type  inequality and ground states}\label{GNsec}
As we pointed out above, the proof of Proposition \ref{gwpsystemh1} is an immediate consequence of the Gagliardo-Nirenberg type  inequality \eqref{inequalityGNtypeint} and a standard argument. In addition, it is clear that the smallness assumption in Proposition \ref{gwpsystemh1} is related to the constant appearing in \eqref{inequalityGNtypeint}. Hence, the main goal of this section is to study the best constant one can place in \eqref{inequalityGNtypeint}. From now on we assume $\mu = 1$. 

Let us start by introducing the set
\begin{equation}\label{sethalmitoniano}
\mathcal{P} = \{ (u,v) \in  H^{1}(\mathbb{R}) \times H^{1}(\mathbb{R})\setminus \{(0,0)\}  \, : \, P(u,v):=\int H(u,v)dx > 0  \}
\end{equation}
and the functional
\begin{equation}\label{functionalJ}
J(u,v) = \displaystyle \frac{\|(u,v)\|^{k+2} \|\partial_{x}(u, v)\|^{k}}{2  \displaystyle \int H(u,v) dx}.
\end{equation}

\begin{remark}
We always have $\mathcal{P}\neq\emptyset$. Indeed, for any $u\in H^1(\R)\setminus\{0\}$ we obtain 
$$
\int H(u,u)=\left(\frac{a+b}{k+1}+\frac{c+d}{k}\right)\int u^{2k+2}dx>0,
$$
which means that $(u,u)\in\mathcal{P}$.
\end{remark}

From \eqref{inequalityGNtypeint} we immediately see that, on $\mathcal{P}$, functional $J$ is bounded from below by a positive constant. As a consequence, the best constant we can place in \eqref{inequalityGNtypeint} is $K_{opt}$ given by
\begin{equation}\label{konstantoptima}
K_{opt}^{-1} =  \displaystyle \inf \{ J(u,v) \, : \, (u,v) \in\mathcal{P}\}.
\end{equation}
So, our task is to understand the infimum of $J$ on the set $\mathcal{P}$. As we will see below such a infimum is attained in a special solution of \eqref{sistemaeliptico2}.

\begin{definition}\label{defweaksol}
The pair $(\phi,\psi) \in H^{1}(\mathbb{R}) \times H^{1}(\mathbb{R})$ is said to be  a (weak) solution of \eqref{sistemaeliptico2} if 
\begin{equation}\label{weaksol}
\begin{cases}
\displaystyle \int \phi w dx + \int \phi'w' dx  - \int f(\phi,\psi)w dx = 0,\\
\displaystyle \int \psi z dx  + \int \psi'z' dx  -  \int g(\phi, \psi)z dx = 0,
\end{cases}
\end{equation}
for any  $(w,z) \in H^{1}(\mathbb{R}) \times H^{1}(\mathbb{R})$.
\end{definition}

It is not difficult to see that $(\phi,\psi)$ is a solution of \eqref{sistemaeliptico2} if and only if it is a critical point of the action functional
	\begin{equation}\label{operatorI}
	I(u,v) := M(u,v) + E(u,v) = \displaystyle \frac{1}{2}\|(u,v)\|^{2} + \frac{1}{2}\|\partial_{x}(u,v)\|^{2} - P(u,v).
	\end{equation}
In addition, by the standard elliptic regularity theory any weak solution is indeed smooth and can be regarded as a solution in the strong sense (see, for instance, \cite[Chapter 8]{CAZENAVEBOOK}).
Among all critical points of \eqref{operatorI}, the minima play a distinguished role  in several aspects of \eqref{sistemaeliptico2}; they are called ground states.

\begin{definition}\label{groundstate}
	A pair of real-valued functions $(\phi, \psi) \in H^{1}(\mathbb{R}) \times H^{1}(\mathbb{R})$ is called a \textit{ground-state} solution of \eqref{sistemaeliptico2} if
	$$
	I(\phi, \psi) = \inf \{ I(u,v) \, : \, (u,v) \in H^{1}(\mathbb{R}) \times H^{1}(\mathbb{R})\setminus {(0,0)} \ \ \mbox{and} \ \ I'(u,v) =0 \}.
	$$
\end{definition}

Next we give some properties of the solutions of \eqref{sistemaeliptico2}.

\begin{proposition}[Pohozaev type identities]\label{Pohozaevid}
Let $(\phi, \psi)$ be a solution of \eqref{sistemaeliptico2}. The following identities hold.
\begin{flalign}
 \label{Pohosaevid1} \mathrm{(i)} & \ \ \displaystyle \|(\phi, \psi)\|^{2} + \|\partial_{x}(\phi, \psi)\|^{2} =  (2k+2)P(\phi, \psi) ;&\\
\label{Pohosaevid2} \mathrm{(ii)}  &\ \ \displaystyle \|(\phi, \psi)\|^{2} - \|\partial_{x}(\phi, \psi)\|^{2} = 2 P(\phi, \psi) ;&\\
\label{Pohosaevid3} \mathrm{(iii)} & \ \ \displaystyle \|\partial_{x}(\phi, \psi)\|^{2} = \frac{k}{k+2}\|(\phi, \psi)\|^{2};&\\
\label{Pohosaevid4} \mathrm{(iv)} & \ \ \displaystyle P(\phi, \psi)  = \frac{1}{k+2}\|(\phi, \psi)\|^{2};&\\
\label{Pohosaevid5} \mathrm{(v)}  & \ \  \displaystyle P(\phi, \psi) = \frac{1}{k}\|\partial_{x}(\phi, \psi)\|^{2}.&
\end{flalign}
In particular, any nontrivial solution of \eqref{sistemaeliptico2} belongs to $\mathcal{P}$.
\end{proposition}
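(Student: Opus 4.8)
The plan is to establish (i) and (ii) by two independent testing/scaling arguments, and then to read off (iii)--(v) as purely algebraic consequences, closing with the membership assertion. Throughout I would use two facts. First, by \eqref{functionh2} the homogeneity of $H$ yields the pointwise Euler relation $f(\phi,\psi)\phi+g(\phi,\psi)\psi=(2k+2)H(\phi,\psi)$. Second, as recorded right after Definition \ref{defweaksol}, any weak solution of \eqref{sistemaeliptico2} is smooth and decays (exponentially) at infinity, so the manipulations below are legitimate and all boundary terms vanish.

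First I would prove (i). Testing the weak formulation \eqref{weaksol} with $(w,z)=(\phi,\psi)$ and adding the two resulting identities gives
\[
\|(\phi,\psi)\|^2+\|\partial_x(\phi,\psi)\|^2=\int\big[f(\phi,\psi)\phi+g(\phi,\psi)\psi\big]\,dx.
\]
By the Euler relation the right-hand side equals $(2k+2)\int H(\phi,\psi)\,dx=(2k+2)P(\phi,\psi)$, which is exactly (i). This step is rigorous at the $H^1$ level and requires no decay information.

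Next, for the genuine Pohozaev identity (ii), I would run the classical dilation argument. Multiplying the first equation of \eqref{sistemaeliptico2} by $x\phi'$ and the second by $x\psi'$, integrating over $\R$ and summing, the linear terms give, after integration by parts, $-\tfrac12\|\partial_x(\phi,\psi)\|^2+\tfrac12\|(\phi,\psi)\|^2$, while the nonlinear terms combine, again via the Euler relation, into $\int x\,\frac{d}{dx}H(\phi,\psi)\,dx=-\int H(\phi,\psi)\,dx=-P(\phi,\psi)$; collecting and multiplying by $2$ yields (ii). Equivalently, and this is the formulation I would prefer to write since it avoids the weighted integration by parts, one uses the scaling $\phi_\lambda(x):=\phi(x/\lambda)$, $\psi_\lambda(x):=\psi(x/\lambda)$, for which a change of variables gives the explicit expression
\[
I(\phi_\lambda,\psi_\lambda)=\tfrac{\lambda}{2}\|(\phi,\psi)\|^2+\tfrac{1}{2\lambda}\|\partial_x(\phi,\psi)\|^2-\lambda\,P(\phi,\psi).
\]
Since $(\phi,\psi)$ is a critical point of $I$ in \eqref{operatorI}, the scalar map $\lambda\mapsto I(\phi_\lambda,\psi_\lambda)$ is critical at $\lambda=1$; differentiating and setting $\lambda=1$ returns precisely (ii).

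Finally, (iii)--(v) follow by elementary algebra: adding and subtracting (i) and (ii) produces $\|(\phi,\psi)\|^2=(k+2)P(\phi,\psi)$ and $\|\partial_x(\phi,\psi)\|^2=kP(\phi,\psi)$, which are (iv) and (v), and dividing one by the other gives (iii). The membership claim is then immediate: for a nontrivial solution $\|(\phi,\psi)\|^2>0$, so (iv) forces $P(\phi,\psi)>0$ and hence $(\phi,\psi)\in\mathcal{P}$. The only delicate point in the whole argument is the justification of (ii): I must ensure the decay of $(\phi,\psi)$ and its derivatives is strong enough both to discard the boundary terms in the weighted integration by parts and, in the scaling formulation, to make $\lambda\mapsto(\phi_\lambda,\psi_\lambda)$ a differentiable curve in $H^1(\R)\times H^1(\R)$, i.e. $x\,\partial_x(\phi,\psi)\in L^2$. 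Both requirements reduce to the smoothness and exponential decay of solutions furnished by elliptic regularity, so that is the step I would treat with the most care.
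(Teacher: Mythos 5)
Your proof is correct and follows essentially the same route as the paper: identity (i) by testing the weak formulation with $(\phi,\psi)$ and invoking the Euler relation \eqref{functionh2}, identity (ii) by the weighted multiplier $x\phi'$, $x\psi'$ (the scaling reformulation you add is an equivalent standard variant), and (iii)--(v) plus the membership claim by the same elementary algebra. Your extra remark on justifying the decay needed for (ii) is a reasonable point of care that the paper leaves implicit.
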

\begin{proof}
By taking $(w,z) = (\phi, \psi)$ in \eqref{weaksol}, we obtain
\begin{equation}
\displaystyle \|(\phi, \psi)\|^{2} + \|\partial_{x}(\phi,\psi)\|^{2} = \displaystyle  \int_{\mathbb{R}} \left[f(\phi,\psi)\phi+ g(\phi,\psi)\psi \right] dx.
\end{equation}
From  \eqref{functionh2} we conclude the prove of \eqref{Pohosaevid1}.
On the other hand, we show \eqref{Pohosaevid2} by multiplying the equations in (\ref{sistemaeliptico2}) by $x\phi'$ and $x\psi'$, respectively, integrating on the spatial variable and applying integration by parts.  The identity \eqref{Pohosaevid3} results from multiplying \eqref{Pohosaevid2} by $-(k+1)$ and adding to \eqref{Pohosaevid1}. Finally,  identities \eqref{Pohosaevid4} and \eqref{Pohosaevid5} are obtained by
adding and subtracting, respectively, the equations \eqref{Pohosaevid1} and \eqref{Pohosaevid2}.
\end{proof}

The Pohozaev identities allow us to prove the   equivalence between minimizing the functionals  $J$ and $I$.

\begin{proposition}\label{propIandsolution}
If $(\phi,\psi)$ is a solution of \eqref{sistemaeliptico2} then
\begin{align}\label{functionalJapp}
J(\phi, \psi) = \displaystyle \frac{k+2}{2} \left( \frac{k}{k+2}\right)^{\frac{k}{2}}\|(\phi, \psi)\|^{2k}.
\end{align}
and
\begin{equation}\label{Iandsolution}
\displaystyle I(\phi,\psi) = \frac{k}{k+2}\|(\phi,\psi)\|^{2}.
\end{equation}
In particular, a nontrivial solution $(\phi,\psi)\in\mathcal{P}$ of \eqref{sistemaeliptico2} is a minimizer of $J$ if and only if it is a ground state.
\end{proposition}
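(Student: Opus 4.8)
The plan is to prove the two displayed formulas first and then deduce the final equivalence as a direct consequence. The formulas \eqref{functionalJapp} and \eqref{Iandsolution} are really just algebraic rearrangements of the Pohozaev identities in Proposition \ref{Pohozaevid}, so the work is mechanical rather than conceptual.

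\textbf{Step 1: the identity \eqref{Iandsolution} for the action.} Starting from the definition \eqref{operatorI}, namely $I(\phi,\psi)=\frac{1}{2}\|(\phi,\psi)\|^2+\frac{1}{2}\|\partial_x(\phi,\psi)\|^2-P(\phi,\psi)$, I would substitute the values of $\|\partial_x(\phi,\psi)\|^2$ and $P(\phi,\psi)$ furnished by \eqref{Pohosaevid3} and \eqref{Pohosaevid4}. Concretely, replacing $\|\partial_x(\phi,\psi)\|^2$ by $\frac{k}{k+2}\|(\phi,\psi)\|^2$ and $P(\phi,\psi)$ by $\frac{1}{k+2}\|(\phi,\psi)\|^2$ collapses everything to a single multiple of $\|(\phi,\psi)\|^2$, and collecting the coefficients $\frac12+\frac12\cdot\frac{k}{k+2}-\frac{1}{k+2}$ should yield exactly $\frac{k}{k+2}$.

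\textbf{Step 2: the identity \eqref{functionalJapp} for $J$.} Here I would take the definition \eqref{functionalJ}, $J(\phi,\psi)=\frac{\|(\phi,\psi)\|^{k+2}\|\partial_x(\phi,\psi)\|^k}{2P(\phi,\psi)}$, and again feed in \eqref{Pohosaevid3} and \eqref{Pohosaevid4}. The factor $\|\partial_x(\phi,\psi)\|^k$ becomes $\big(\frac{k}{k+2}\big)^{k/2}\|(\phi,\psi)\|^k$, the denominator $2P(\phi,\psi)$ becomes $\frac{2}{k+2}\|(\phi,\psi)\|^2$, and after cancelling the powers of $\|(\phi,\psi)\|$ one is left with $\frac{k+2}{2}\big(\frac{k}{k+2}\big)^{k/2}\|(\phi,\psi)\|^{2k}$, as claimed. (One must note that any nontrivial solution lies in $\mathcal{P}$ by the last sentence of Proposition \ref{Pohozaevid}, so $P(\phi,\psi)>0$ and $J$ is well-defined at $(\phi,\psi)$.)

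\textbf{Step 3: the equivalence.} For the final assertion I would argue as follows. Both \eqref{functionalJapp} and \eqref{Iandsolution} express $J$ and $I$, evaluated at solutions of \eqref{sistemaeliptico2}, as strictly increasing functions of the single quantity $\|(\phi,\psi)\|^2$ (for $I$ it is linear in $\|(\phi,\psi)\|^2$, for $J$ it is a positive power $\|(\phi,\psi)\|^{2k}$ times a positive constant). Since the set of critical points of $I$ coincides with the set of nontrivial solutions of \eqref{sistemaeliptico2}, minimizing $I$ over this set is the same as minimizing $\|(\phi,\psi)\|^2$ over it, which in turn is the same as minimizing $J$ over it. Thus a nontrivial solution in $\mathcal{P}$ achieves the infimum of $J$ exactly when it achieves the infimum of $I$, i.e. exactly when it is a ground state in the sense of Definition \ref{groundstate}. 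The only mild subtlety—and the step I would be most careful about—is checking that minimizing $J$ over all of $\mathcal{P}$ agrees with minimizing $J$ over the subset of solutions, but this is harmless here because the statement merely compares $J$-values \emph{among} solutions (via the monotone dependence on $\|(\phi,\psi)\|^2$) and identifies the $J$-minimizing solution with the $I$-minimizing solution; no facts about nonsolution competitors in $\mathcal{P}$ are needed for this particular ``if and only if.''
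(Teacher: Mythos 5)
Your Steps 1 and 2 are exactly the paper's proof: substitute the Pohozaev identities \eqref{Pohosaevid3} and \eqref{Pohosaevid4} into the definitions \eqref{operatorI} and \eqref{functionalJ} and simplify, and your arithmetic checks out. Your Step 3 actually says more than the paper does (the paper stops after the two computations and treats the ``in particular'' as immediate); your observation that both $I$ and $J$, restricted to solutions, are increasing functions of $\|(\phi,\psi)\|^2$ is the right reason, and your closing caveat about minimizing $J$ over all of $\mathcal{P}$ versus over the set of solutions correctly identifies the one point that neither you nor the paper settles at this stage (it is only resolved later via the Nehari-manifold machinery and the scale invariance of $J$).
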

\begin{proof}
From Proposition \ref{Pohozaevid}, we obtain
\begin{align*}
J(\phi, \psi) &=  \displaystyle \frac{\|(\phi,\psi)\|^{k+2} \left( \frac{k}{k+2} \|(\phi, \psi)\|^{2}\right)^{\frac{k}{2}}}{  \frac{2}{k+2} \|(\phi, \psi)\|^{2}} = \displaystyle \frac{k+2}{2} \left( \frac{k}{k+2}\right)^{\frac{k}{2}}\|(\phi, \psi)\|^{2k}
\end{align*}
and
\begin{equation*}
\begin{split}
I(u,v) 
= \displaystyle \frac{1}{2}\|(u,v)\|^{2} + \frac{1}{2}\left( \frac{k}{k+2} \right)\|(u,v)\|^{2} -\frac{1}{k+2} \|(u,v)\|^{2} = \displaystyle \frac{k}{k+2} \|(u,v)\|^{2},
\end{split}
\end{equation*}
which is the desired.
\end{proof}

\begin{remark}\label{caracterizacaogs}
 Proposition \ref{propIandsolution} ensures that the {ground-state} solutions of \eqref{sistemaeliptico2} are solutions which minimize the $L^{2}(\mathbb{R}) \times L^{2}(\mathbb{R})$ norm.
\end{remark}

The next sections will be dedicated to prove the existence of {ground state} solutions for \eqref{sistemaeliptico2}. Once we do that, we also obtain the minimum of $J$, which is our main goal.

\subsection{Variational theory}\label{sectionteovariacional}

In this section, we use the Mountain Pass Theorem (Theorem \ref{PSseq}) to obtain a sequence which provides a minimum for the functional $I$.

\begin{proposition}\label{existencePS}
The  \textit{action functional} $I$ defined in  \eqref{operatorI} admits a $(PS)_{\omega}$-sequence with $\omega > 0$ given by
\begin{equation}\label{mim3} 
\omega := \displaystyle \inf_{\gamma \in \Gamma} \max_{\ell \in [0,1]} I(\gamma(\ell)),
\end{equation}
where
\begin{equation*}
\Gamma := \{ \gamma \in C([0,1],H^{1}(\mathbb{R}) \times H^{1}(\mathbb{R})) \ \ \mbox{and} \ \ \gamma(0) =(0,0), \ I(\gamma(1)) < 0 \}.
\end{equation*}
\end{proposition}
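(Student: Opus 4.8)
The plan is to verify directly the hypotheses of the Mountain Pass Theorem (Theorem \ref{PSseq}) with $Y=H^{1}(\R)\times H^{1}(\R)$ equipped with the norm $\|(u,v)\|_{Y}^{2}=\|(u,v)\|^{2}+\|\partial_{x}(u,v)\|^{2}$, so that the quadratic part of $I$ in \eqref{operatorI} is exactly $\frac{1}{2}\|(u,v)\|_{Y}^{2}$. Since $k\geq1$ and $H$ in \eqref{functionHint} is a homogeneous polynomial, in one space dimension the Sobolev embedding $H^{1}(\R)\hookrightarrow L^{\infty}(\R)$ guarantees that the map $(u,v)\mapsto P(u,v)$ is of class $C^{2}$, whence $I\in C^{2}(Y,\R)$; moreover $I(0,0)=0$. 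It therefore suffices to exhibit $r>0$ and a point $(\widetilde{u},\widetilde{v})$ with $\|(\widetilde{u},\widetilde{v})\|_{Y}>r$ realizing the required inequality $\sigma:=\inf_{\|(u,v)\|_{Y}=r}I(u,v)>I(0,0)\geq I(\widetilde{u},\widetilde{v})$.

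For the geometry near the origin, first note that since the coefficients $a,b,c,d$ in \eqref{functionHint} are nonnegative one has $H(u,v)\leq H(|u|,|v|)$ pointwise, so $P(u,v)\leq\int H(|u|,|v|)\,dx$. Combining this with the Gagliardo-Nirenberg type inequality \eqref{inequalityGNtypeint} and the trivial bounds $\|(u,v)\|\leq\|(u,v)\|_{Y}$ and $\|\partial_{x}(u,v)\|\leq\|(u,v)\|_{Y}$ yields
$$
P(u,v)\leq \tfrac{C}{2}\,\|(u,v)\|^{k+2}\|\partial_{x}(u,v)\|^{k}\leq C'\,\|(u,v)\|_{Y}^{2k+2}.
$$
Hence $I(u,v)\geq\frac{1}{2}\|(u,v)\|_{Y}^{2}-C'\|(u,v)\|_{Y}^{2k+2}$, so on the sphere $\|(u,v)\|_{Y}=r$ we obtain $I(u,v)\geq r^{2}\left(\frac{1}{2}-C'r^{2k}\right)$. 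Choosing $r>0$ small enough that $C'r^{2k}<\frac{1}{4}$ gives $\sigma\geq\frac{1}{4}r^{2}>0=I(0,0)$, as desired.

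To produce a point below the origin I would use the homogeneity of $H$. By \eqref{functionh2} (or directly from \eqref{functionHint}) we have $H(tu,tv)=t^{2k+2}H(u,v)$, hence $P(tu,tv)=t^{2k+2}P(u,v)$ for every $t>0$. Pick any $w\in H^{1}(\R)\setminus\{0\}$; by the remark following \eqref{functionalJ} the pair $(w,w)$ lies in $\mathcal{P}$, that is $P(w,w)>0$. Then
$$
I(tw,tw)=\frac{t^{2}}{2}\|(w,w)\|_{Y}^{2}-t^{2k+2}P(w,w)\longrightarrow-\infty\quad\text{as }t\to\infty,
$$
because $2k+2>2$. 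Fixing $t^{*}$ large enough that $I(t^{*}w,t^{*}w)<0$ and $\|(t^{*}w,t^{*}w)\|_{Y}>r$, and setting $(\widetilde{u},\widetilde{v})=(t^{*}w,t^{*}w)$, all hypotheses of Theorem \ref{PSseq} are met; it produces a sequence satisfying \eqref{PS1}--\eqref{PS2} with $\omega$ given by \eqref{mim3}, and the admissible class $\Gamma$ coincides with the one stated in the proposition.

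Finally, $\omega>0$ follows from the same lower bound: the estimate $I(u,v)\geq\frac{1}{4}\|(u,v)\|_{Y}^{2}$, valid for $\|(u,v)\|_{Y}\leq r$, shows that any point where $I<0$ must have $Y$-norm strictly larger than $r$. Consequently, for every $\gamma\in\Gamma$ the continuous map $\ell\mapsto\|\gamma(\ell)\|_{Y}$ runs from $0$ to a value exceeding $r$, so by the intermediate value theorem it meets the sphere $\|\cdot\|_{Y}=r$ at some $\ell_{0}$, giving $\max_{\ell}I(\gamma(\ell))\geq I(\gamma(\ell_{0}))\geq\sigma$; taking the infimum over $\gamma$ yields $\omega\geq\sigma>0$. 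The only genuinely technical point is the control of $P$ near the origin, which is exactly where inequality \eqref{inequalityGNtypeint} enters; everything else is the standard mountain-pass verification and presents no essential obstacle.
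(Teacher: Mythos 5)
Your proposal is correct and follows essentially the same route as the paper: the near-origin bound $I(u,v)\geq\frac{1}{2}\|(u,v)\|_{H^{1}\times H^{1}}^{2}-C\|(u,v)\|_{H^{1}\times H^{1}}^{2k+2}$ derived from \eqref{inequalityGNtypeint}, a scaling argument on an element of $\mathcal{P}$ to produce a point where $I<0$, and the Mountain Pass Theorem; your choice of the explicit pair $(w,w)$ and your remark that $H(u,v)\leq H(|u|,|v|)$ merely make explicit details the paper leaves implicit. The intermediate-value argument for $\omega>0$ is also the one the paper uses.
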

\begin{proof}
	It suffices to show that $I$ satisfies the mountain pass geometry in Theorem \ref{PSseq}. First of all note that
 inequality \eqref{inequalityGNtypeint} guarantees the existence of $C_{0} >0$ such that
\begin{align}\label{pa1}
I(u,v) = \frac{1}{2}\|(u, v)\|_{H^{1} \times H^{1}}^{2} - P(u,v) \geq \frac{1}{2}\|(u, v)\|_{H^{1} \times H^{1}}^{2} - C_{0} \|(u, v)\|_{H^{1} \times H^{1}}^{2k+2}.
\end{align}
So, for $r$ sufficiently small there exists $\delta_{0} >0$ such that $I(u,v) \geq \delta_{0}$ for all $(u,v) \in H^{1}(\mathbb{R}) \times H^{1}(\mathbb{R})$ sa\-tisfying $ \|(u, v)\|_{H^{1} \times H^{1}} = r$. Furthermore, by continuity, taking  any $\gamma \in \Gamma$, we have $\max_{\ell \in [0,1]}I(\gamma(\ell)) \geq I(u,v)$ for all $(u,v) \in H^{1}(\mathbb{R}) \times H^{1}(\mathbb{R})$ such that $ \|(u, v)\|_{H^{1} \times H^{1}} = r $. Thus, $\omega \geq \delta_{0} >0$.

Now, fix  $(u,v) \in \mathcal{P}$ and set $(\widetilde{u}, \widetilde{v}):= (Lu,Lv) $, where $L>0$ will be chosen conveniently. Thus,
\begin{align*}
I(\widetilde{u}, \widetilde{v})= \frac{L^{2}}{2} \|(u, v)\|_{H^{1} \times H^{1}}^{2} - L^{2k+2} P(u,v),
\end{align*}
By choosing  $L$ sufficiently large, we obtain $\|(\widetilde{u} , \widetilde{v})\|_{H^{1} \times H^{1}} = L \|(u,v) \|_{H^{1} \times H^{1}} > r$ and $I(\widetilde{u}, \widetilde{v}) < 0$. Consequently, 
$$
I(\widetilde{u}, \widetilde{v}) < 0 = I(0,0) < \delta_{0} \leq \inf \ \{I(u,v) \, : \, \|(u,v)\|_{ H^{1} \times H^{1}} =r \}=: \sigma.
$$
The result then follows from the Mountain Pass Theorem.
\end{proof}

Next result gives some additional properties of any $(PS)_{\eta}$-sequence of the functional $I$.

\begin{proposition}\label{convegencederivative}
Let $(u_{n},v_{n})$ be any $(PS)_{\eta}$-sequence of the functional $I$. Then, $(u_{n},v_{n})$ is  bounded in $H^{1}(\mathbb{R}) \times H^{1}( \mathbb{R})$. Moreover,
\begin{flalign}
\label{conder1} \mathrm{(i)} & \ \ \displaystyle \|(u_{n},v_{n})\|_{H^{1} \times H^{1}}^{2} \longrightarrow \frac{2k+2}{k}\eta ;& \\
\label{conder2}  \mathrm{(ii)} & \ \ \displaystyle P(u_{n},v_{n}) \longrightarrow \frac{\eta}{k}.&
\end{flalign}
In particular, $\eta \geq 0$ and $\eta = 0$ if and only if $u_{n} \longrightarrow 0$, $v_{n} \longrightarrow 0$ in $H^{1}(\mathbb{R})$.
\end{proposition}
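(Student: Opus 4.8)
The plan is to extract two scalar quantities from a $(PS)_\eta$-sequence and play them against each other. I would begin by recording the identity obtained from testing the derivative against the sequence itself: using \eqref{functionh2}, the relation $\int[f(u,v)u+g(u,v)v]\,dx=(2k+2)P(u,v)$ gives
\[
I'(u,v)(u,v)=\|(u,v)\|_{H^1\times H^1}^2-(2k+2)P(u,v).
\]
Writing $X_n:=\|(u_n,v_n)\|_{H^1\times H^1}^2$ and $Y_n:=P(u_n,v_n)$, the definition of $I$ yields $\tfrac12 X_n-Y_n=I(u_n,v_n)$, while the identity above reads $X_n-(2k+2)Y_n=I'(u_n,v_n)(u_n,v_n)$. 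These are the only two facts about the sequence I intend to use.

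The decisive move is to eliminate $Y_n$ by forming the combination
\[
I(u_n,v_n)-\frac{1}{2k+2}I'(u_n,v_n)(u_n,v_n)=\frac{k}{2k+2}X_n,
\]
which isolates the norm. Since $(u_n,v_n)$ is a $(PS)_\eta$-sequence, $I(u_n,v_n)$ is bounded while $\varepsilon_n:=\|I'(u_n,v_n)\|_{(H^1\times H^1)'}\to0$, and by duality $|I'(u_n,v_n)(u_n,v_n)|\le\varepsilon_n X_n^{1/2}$. Substituting gives $\tfrac{k}{2k+2}X_n\le C+\tfrac{1}{2k+2}\varepsilon_n X_n^{1/2}$ for some constant $C$. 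Here lies the main obstacle: the boundedness I want is apparently circular, because the dual estimate reintroduces the very norm I am trying to control. I would resolve this by the standard quadratic-versus-linear argument: dividing by $X_n^{1/2}$ shows that an unbounded subsequence would force $\tfrac{k}{2k+2}X_n^{1/2}\le C X_n^{-1/2}+\tfrac{1}{2k+2}\varepsilon_n\to0$, contradicting $X_n^{1/2}\to\infty$. Hence $(u_n,v_n)$ is bounded in $H^1(\mathbb{R})\times H^1(\mathbb{R})$.

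With boundedness in hand, the dual estimate upgrades to $I'(u_n,v_n)(u_n,v_n)\to0$, i.e. $X_n-(2k+2)Y_n\to0$. Feeding this back into the displayed combination gives $\tfrac{k}{2k+2}X_n\to\eta$, which is precisely (i): $X_n\to\tfrac{2k+2}{k}\eta$. Then $Y_n=\tfrac12 X_n-I(u_n,v_n)\to\tfrac{k+1}{k}\eta-\eta=\tfrac{\eta}{k}$, giving (ii). Finally, since $X_n\ge0$ and $X_n\to\tfrac{2k+2}{k}\eta$, we must have $\eta\ge0$, and $\eta=0$ holds exactly when $X_n\to0$, that is, when $u_n\to0$ and $v_n\to0$ in $H^1(\mathbb{R})$; this settles the \emph{in particular} clause. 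The only verifications I would defer as routine are the explicit formula for $I'$ and the computation $\int[f(u,v)u+g(u,v)v]\,dx=(2k+2)P(u,v)$, both immediate from the definitions together with \eqref{functionh2}.
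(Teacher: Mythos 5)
Your proposal is correct and follows essentially the same route as the paper: both test $I'$ against $(u_n,v_n)$, use \eqref{functionh2} to get $I'(u_n,v_n)(u_n,v_n)=\|(u_n,v_n)\|_{H^1\times H^1}^2-(2k+2)P(u_n,v_n)$, and isolate the squared norm through the same linear combination of $I$ and $I'(u_n,v_n)(u_n,v_n)$ (yours is the paper's combination scaled by $\tfrac{k}{2k+2}$). The only cosmetic difference is in closing the boundedness step: the paper absorbs the cross term via Young's inequality, whereas you divide by $X_n^{1/2}$ and argue by contradiction on an unbounded subsequence — both standard and equally valid.
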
 
\begin{proof} 
	To begin with, note that a simple calculation gives
	\begin{equation}\label{derivativeI}
	\displaystyle I'(u,v)(w,z) = \int(uw + vz) dx + \int (\partial_{x} u \partial_{x} w + \partial_{x} v \partial_{x} z) dx - \int (f(u,v)w + g(u,v)z) dx
	\end{equation}
 So,  by taking $(w,z) = (u_{n}, v_{n})$ and using \eqref{functionh2} we obtain
\begin{equation}\label{conv1}
\displaystyle \left| \|(u_{n},v_{n})\|_{H^{1} \times H^{1}}^{2} - (2k+2) P(u_{n},v_{n}) \right|  = |I'(u_{n},v_{n})(u_{n},v_{n}) |
\leq K_{n} \|(u_{n}, v_{n})\|_{H^{1} \times H^{1}},
\end{equation}
where $K_{n} := \|I'(u_{n},v_{n})\|_{H^{-1} \times H^{-1}} $.
Now, note that
\begin{equation*}
\displaystyle \frac{2k+2}{k}  \left( I(u_{n},v_{n}) -\eta \right)  -\frac{1}{k} I'(u_{n},v_{n})(u_{n},v_{n}) + \frac{2k+2}{k} \eta 
= \|(u_{n},v_{n})\|_{H^{1} \times H^{1} }^{2}.
\end{equation*}
Therefore,
\begin{align*}
\|(u_{n},v_{n})\|_{H^{1} \times H^{1}}^{2} 
& \leq \frac{2k+2}{k} | I(u_{n},v_{n}) -\eta | + \frac{K_{n}}{k} \|(u_{n},v_{n})\|_{H^{1} \times H^{1}} + \frac{2k+2}{k} | \eta  | \\
& \leq \frac{2k+2}{k} |\left( I(u_{n},v_{n}) -\eta \right)| + \frac{K_{n}^{2}}{2k^{2}} +  \frac{1}{2}\|(u_{n},v_{n})\|_{H^{1} \times H^{1}}^{2} + \frac{2k+2}{k} | \eta  | 
\end{align*}
where in the last inequality we used the Young inequality. Since $I(u_n,v_n)\to\eta$ and $K_n\to0$ we deduce that $(u_n,v_n)$ is bounded.

On the other hand, from  \eqref{conv1} we infer
$$
\displaystyle \|(u_{n},v_{n})\|_{H^{1} \times H^{1}}^{2} -  (2k+2) P(u_{n},v_{n}) \longrightarrow 0 .
$$
Hence, in order to conclude the proof  it suffices to prove \eqref{conder1}. But since
\begin{align*}
\left| \|(u_{n},v_{n})\|_{H^{1} \times H^{1}( \mathbb{R})}^{2} -  \frac{2k+2}{k}  \eta  \right|  &  \leq  \displaystyle \frac{2k+2}{k} | \left( I(u_{n},v_{n}) - \eta \right)| + \frac{1}{k} |I'(u_{n},v_{n})(u_{n},v_{n})|  \\
& \leq \frac{2k+2}{k} |\left( I(u_{n},v_{n}) -\eta \right)| + \frac{K_{n}}{k} \|(u_{n},v_{n})\|_{H^{1} \times H^{1}}.
\end{align*}
we obtain the desired.
\end{proof}

\subsection{Compactness}\label{sectionconcentracaocompacidade}

Our goal is to show that, up to a subsequence and a spatial translation, the $(PS)_{\omega}$-sequence obtained in Proposition \ref{existencePS} converges in $H^{1}(\mathbb{R}) \times H^{1}(\mathbb{R})$ to a  function  $(\Phi, \Psi) \neq (0,0)$. To do this, let us first prove that $(u_n,v_n)$ does not vanish in a suitable Lebesgue space.

\begin{proposition}\label{estimatenorm}
Let $(u_{n},v_{n})$ be the $(PS)_{\omega}$-sequence obtained in Proposition \ref{existencePS}. Then,
$$
\liminf_{n\to\infty}\|(u_{n},v_{n})\|_{L^{2k+2} \times L^{2k+2}}^{2k+2} \neq 0.
$$
\end{proposition}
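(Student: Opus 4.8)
The plan is to reduce this non-vanishing statement to the already-established convergence $P(u_n,v_n)\to\omega/k$ from Proposition \ref{convegencederivative}. The key observation is that $P$ is controlled from above by the $L^{2k+2}\times L^{2k+2}$ norm. Since $\omega>0$ forces $P(u_n,v_n)$ to approach a strictly positive limit, the $L^{2k+2}$ norms cannot collapse to zero. So the whole argument amounts to producing the inequality $P(u,v)\leq C\|(u,v)\|_{L^{2k+2}\times L^{2k+2}}^{2k+2}$ and then taking a limit.

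To obtain that upper bound I would work term by term on
\[
H(u,v)=\frac{a}{2k+2}\left(u^{2k+2}+v^{2k+2}\right)+\frac{b}{k+1}(uv)^{k+1}+\frac{c}{k}u^{k+2}v^{k}+\frac{d}{k}u^{k}v^{k+2}.
\]
Because the coefficients $a,b,c,d$ are nonnegative and each monomial satisfies $(uv)^{k+1}\leq(|u||v|)^{k+1}$, $u^{k+2}v^{k}\leq|u|^{k+2}|v|^{k}$, and $u^{k}v^{k+2}\leq|u|^{k}|v|^{k+2}$ pointwise, we have $H(u,v)\leq H(|u|,|v|)$. I would then apply Young's inequality to each mixed term: $(|u||v|)^{k+1}\leq\tfrac12(|u|^{2k+2}+|v|^{2k+2})$, and with the conjugate exponents $\tfrac{2k+2}{k+2}$ and $\tfrac{2k+2}{k}$,
\[
|u|^{k+2}|v|^{k}\leq\frac{k+2}{2k+2}|u|^{2k+2}+\frac{k}{2k+2}|v|^{2k+2},\qquad |u|^{k}|v|^{k+2}\leq\frac{k}{2k+2}|u|^{2k+2}+\frac{k+2}{2k+2}|v|^{2k+2}.
\]
Integrating and collecting the constants yields $P(u,v)=\int H(u,v)\,dx\leq C\int(|u|^{2k+2}+|v|^{2k+2})\,dx=C\|(u,v)\|_{L^{2k+2}\times L^{2k+2}}^{2k+2}$ with $C=C(a,b,c,d,k)>0$.

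Applying this to the $(PS)_\omega$-sequence gives $P(u_n,v_n)\leq C\|(u_n,v_n)\|_{L^{2k+2}\times L^{2k+2}}^{2k+2}$ for every $n$. By Proposition \ref{convegencederivative}(ii) we have $P(u_n,v_n)\to\omega/k$, and Proposition \ref{existencePS} guarantees $\omega>0$, so $\omega/k>0$. Passing to the $\liminf$ in the inequality then gives
\[
\liminf_{n\to\infty}\|(u_n,v_n)\|_{L^{2k+2}\times L^{2k+2}}^{2k+2}\geq\frac{1}{C}\lim_{n\to\infty}P(u_n,v_n)=\frac{\omega}{Ck}>0,
\]
which is the claim.

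Honestly, there is no deep obstacle here: the substantive work was already done in Proposition \ref{convegencederivative}, where the positivity of $\omega$ is transferred to $P(u_n,v_n)$. The only point demanding a little care is the domination $P(u,v)\leq C\|(u,v)\|_{L^{2k+2}\times L^{2k+2}}^{2k+2}$, where one must correctly handle the signs of the cross terms (hence the passage to absolute values, justified by the nonnegativity of $a,b,c,d$) and choose the right conjugate exponents in Young's inequality so that all mixed monomials are absorbed into $|u|^{2k+2}$ and $|v|^{2k+2}$.
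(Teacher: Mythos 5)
Your proof is correct and follows essentially the same route as the paper: both arguments rest on the facts that $P(u_n,v_n)\to\omega/k>0$ (which the paper rederives from $\omega=I(u_n,v_n)-\tfrac12 I'(u_n,v_n)(u_n,v_n)+o(1)=kP(u_n,v_n)+o(1)$, i.e.\ exactly Proposition \ref{convegencederivative}(ii)) and that $P(u,v)\leq C\|(u,v)\|_{L^{2k+2}\times L^{2k+2}}^{2k+2}$, which the paper attributes to H\"older's inequality and you verify explicitly via Young's inequality on each monomial. The only difference is that you spell out the domination of the cross terms in more detail; the logic is identical.
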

\begin{proof} By definition and \eqref{derivativeI} we deduce
\begin{equation}\label{limitenoconvergence}
\begin{split}
 \omega &= I(u_{n},v_{n}) - \frac{1}{2} I'(u_{n},v_{n})(u_{n},v_{n}) + o(1)\\
 & = k P(u_{n},v_{n}) + o(1).
\end{split}
\end{equation}
On the other hand, form  H\"older's inequality, there exists a positive constant $C$ such that
$$
0 < \left|  \displaystyle  k P(u_{n},v_{n}) \right| \leq C \|(u_{n},v_{n})\|_{L^{2k+2} \times L^{2k+2}}^{2k+2}.
$$
Hence, if $\liminf_{n\to\infty}\|(u_{n_j},v_{n_j})\|_{L^{2k+2} \times L^{2k+2}}^{2k+2} = 0$, we would have $\liminf_{n\to\infty}\displaystyle   P(u_{n_j},v_{n_j}) = 0 $. Taking the $\liminf$ in  \eqref{limitenoconvergence} we would obtain  $\omega = 0$, which is a contradiction.
\end{proof}

The rest of this section is devoted to prove a version of Lieb's translation lemma  (see \cite{lieb83}). Here we will follow the ideas presented in \cite{frank}. We start by observing that, up to a subsequence,
 Proposition \ref{estimatenorm} ensures the existence of a constant $C_{2k+2}>0$  such that
\begin{equation}\label{norm1}
\|(u_{n},v_{n})\|_{L^{2k+2} \times L^{2k+2}} \geq C_{2k+2}.
\end{equation}
Since $(u_{n},v_{n})$ is a bounded sequence in $H^{1}(\mathbb{R}) \times H^{1}( \mathbb{R})$, there exists a positive constant satisfying
\begin{equation}\label{norm2}
\|(u_{n},v_{n})\|_{L^{2} \times L^{2}} \leq C_{2}.
\end{equation}
In addition, by Sobolev's embedding  $H^{1}(\mathbb{R}) \hookrightarrow L^{r}(\mathbb{R})$, for every $r \in [2, \infty]$, there exists $r \in (2k+2, \infty)$ such that
\begin{equation}\label{norm3}
\|(u_{n},v_{n})\|_{L^{r} \times L^{r}} \leq C_{r}.
\end{equation}

Inequalities  \eqref{norm1}-\eqref{norm3} lead to a version of the  \textit{pqr Theorem}. 

\begin{theorem}[$pqr$ Theorem]\label{teopqr}
For any $0 < p < q < r < \infty$ and any constants $C_{p}$, $C_{q}$, $C_{r} > 0$, there are positive numbers $\epsilon$ and $\delta$ such that for any  $(f,g)$ satisfying
$$
\|(f,g)\|_{L^{p} \times L^{p}} \leq C_{p} \ ; \ \|(f,g)\|_{L^{q} \times L^{q}} \geq C_{q} \ \textrm{and} \ \|(f,g)\|_{L^{r} \times L^{r}} \leq C_{r},
$$
we have $\big(|\{  |f(x)| > \epsilon\} |  + |\{  |g(x)| > \epsilon \}|\big) \geq  \delta$.

In  other words, if the  $L^{p}$ and $L^{r}$ norms of a sequence $(f_{n},g_{n})$ are controlled from above and the $L^{q}$ norm is controlled from bellow   then this sequence cannot converge to zero in measure. 
\end{theorem}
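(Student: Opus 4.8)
The plan is to prove the statement by splitting the $L^q$-mass of each component according to the size of the function, controlling small values by the $L^p$-norm and large values by the $L^r$-norm, and then simply adding the two components together. Throughout I adopt the natural product-norm convention $\|(f,g)\|_{L^t\times L^t}^t=\|f\|_{L^t}^t+\|g\|_{L^t}^t$ suggested by the usage of $\|(u,v)\|_{L^{2k+2}\times L^{2k+2}}^{2k+2}$ elsewhere in the paper, so that the hypotheses read $\|f\|_{L^p}^p+\|g\|_{L^p}^p\le C_p^p$, $\;\|f\|_{L^q}^q+\|g\|_{L^q}^q\ge C_q^q$ and $\|f\|_{L^r}^r+\|g\|_{L^r}^r\le C_r^r$; in particular $\|f\|_{L^r},\|g\|_{L^r}\le C_r$. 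I write $A=\{\,|f|>\epsilon\,\}$ and $B=\{\,|g|>\epsilon\,\}$, and the goal is a lower bound for $|A|+|B|$ that is uniform over all admissible $(f,g)$.

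First I would estimate the contribution of the small values. Since $p<q$, on $\{|f|\le\epsilon\}$ one has $|f|^q=|f|^{q-p}|f|^p\le\epsilon^{q-p}|f|^p$, hence
\[
\int_{\{|f|\le\epsilon\}}|f|^q\,dx\le \epsilon^{q-p}\|f\|_{L^p}^p,
\]
and likewise for $g$. For the large values I would invoke H\"older's inequality with the conjugate exponents $r/q$ and $r/(r-q)$ (this is where $q<r$ is used), obtaining
\[
\int_{A}|f|^q\,dx\le\Big(\int_A|f|^r\,dx\Big)^{q/r}|A|^{1-q/r}\le C_r^{\,q}\,|A|^{1-q/r},
\]
together with the analogous bound over $B$ for $g$. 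Adding these four estimates and using the hypotheses gives
\[
\begin{aligned}
C_q^q\le \|f\|_{L^q}^q+\|g\|_{L^q}^q
&\le \epsilon^{q-p}\big(\|f\|_{L^p}^p+\|g\|_{L^p}^p\big)+C_r^{\,q}\big(|A|^{1-q/r}+|B|^{1-q/r}\big)\\
&\le \epsilon^{q-p}C_p^p+C_r^{\,q}\big(|A|^{1-q/r}+|B|^{1-q/r}\big).
\end{aligned}
\]

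Now I would \emph{fix} $\epsilon>0$ so small that $\epsilon^{q-p}C_p^p\le\tfrac12 C_q^q$, a choice depending only on $p,q,C_p,C_q$. With this $\epsilon$ the last display forces
\[
|A|^{1-q/r}+|B|^{1-q/r}\ge \frac{C_q^q}{2C_r^{\,q}}=:c>0 .
\]
It then remains to pass from this bound on a concave power sum to a bound on $|A|+|B|$: setting $s=1-q/r\in(0,1)$, at least one of the two nonnegative terms is $\ge c/2$, say $|A|^s\ge c/2$, whence $|A|\ge(c/2)^{1/s}$ and therefore $|A|+|B|\ge(c/2)^{1/s}=:\delta>0$. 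Since both $\epsilon$ and $\delta$ depend only on $p,q,r$ and $C_p,C_q,C_r$, this is precisely the assertion. I expect the only genuinely delicate point to be this final conversion, which is why I single out the level sets $A$ and $B$ from the outset rather than working with a single combined quantity; everything else is the standard interpolation dichotomy (small values absorbed by $L^p$, large values by $L^r$), and the vector-valued statement adds nothing beyond summing the two scalar estimates.
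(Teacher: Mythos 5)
Your proof is correct and is essentially the argument the paper invokes: the paper simply cites Lemma 3.2 of Frank's notes, whose proof is exactly this interpolation dichotomy (small values absorbed by the $L^p$ bound, large values by H\"older against the $L^r$ bound and the measure of the level set), and your adaptation to pairs by summing the two scalar estimates, together with the final passage from $|A|^{s}+|B|^{s}\geq c$ to $|A|+|B|\geq (c/2)^{1/s}$, is sound.
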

\begin{proof}
The proof follows as in Lemma 3.2 of \cite{frank}.
\end{proof}

Now, before proving a version of Lieb's translation lemma, we need the following estimate for the $L^{2}(\mathbb{R})$ norm. 

\begin{lemma}\label{estimatesL2}
Let $r >0$. If $(u,v) \in H^{1}(\mathbb{R}) \times H^{1}(\mathbb{R}) $, then there are constants $C_{1}$, $C_{2} > 0$ such that
\begin{align}
\nonumber \|(u,v)\|^{2} \leq C_{1} \left( \sup_{y_{1} \in \mathbb{R}} |B_{r}(y_{1}) \cap \{ u \neq 0 \} |^{2} + \sup_{y_{2} \in \mathbb{R}} |B_{r}(y_{2}) \cap \{ v \neq 0 \} |^{2} \right)
\times \left[ \|\partial_{x}( u, v)\|^{2} + C_{2} r^{-2} \|(u,v)\|^{2} \right],
\end{align}
where $B_r(y)$ denotes the interval $(y-r,y+r)$.
\end{lemma}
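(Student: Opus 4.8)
The plan is to deduce the global estimate from a purely local one on balls of radius $r$, and then sum over a finitely overlapping covering of $\R$. First I would fix the covering: put $y_j=jr+\tfrac{r}{2}$ and $Q_j=[jr,(j+1)r)$ for $j\in\mathbb{Z}$, so that $\{Q_j\}$ is a partition of $\R$ with $Q_j\subset B_r(y_j)$, while the balls $\{B_r(y_j)\}$ cover $\R$ with overlap at most two (each point of $\R$ lies in at most two of them). This bounded overlap is what will let me pass from sums of local $H^1$-energies back to the global ones.

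The technical heart is the local inequality: for each $j$,
$$
\|u\|_{L^2(B_r(y_j))}^2 \le C\,|B_r(y_j)\cap\{u\neq0\}|^2\bigl(\|u'\|_{L^2(B_r(y_j))}^2 + r^{-2}\|u\|_{L^2(B_r(y_j))}^2\bigr).
$$
Both sides scale identically under $x\mapsto y_j+rx$, so it suffices to prove it on the unit ball $B_1(0)$ (case $r=1$), and there it follows by exactly the argument used for Faber--Krahn (Proposition \ref{faber}) carried out on a bounded interval. Indeed, for $q>2$ Hölder's inequality gives $\|u\|_{L^2(B_1)}^2\le |B_1\cap\{u\neq0\}|^{1-2/q}\|u\|_{L^q(B_1)}^2$, and the Gagliardo--Nirenberg inequality on the bounded interval $B_1$ gives $\|u\|_{L^q(B_1)}\le C\|u\|_{L^2(B_1)}^{1-\theta}\|u\|_{H^1(B_1)}^{\theta}$ with $\theta=\tfrac12-\tfrac1q$. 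Since $1-\tfrac{2}{q}=2\theta$, dividing by $\|u\|_{L^2(B_1)}^{2(1-\theta)}$ (the inequality being trivial otherwise) and raising to the power $1/\theta$ yields $\|u\|_{L^2(B_1)}^2\le C|B_1\cap\{u\neq0\}|^2\|u\|_{H^1(B_1)}^2$, which is the claim for $r=1$; rescaling restores the $r^{-2}$ weight with coefficient one.

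Finally I would sum. Writing $S_u:=\sup_{y_1\in\R}|B_r(y_1)\cap\{u\neq0\}|^2$ and using $Q_j\subset B_r(y_j)$ together with the local inequality,
$$
\|u\|_{L^2(\R)}^2=\sum_j\|u\|_{L^2(Q_j)}^2\le\sum_j\|u\|_{L^2(B_r(y_j))}^2\le C\,S_u\sum_j\bigl(\|u'\|_{L^2(B_r(y_j))}^2+r^{-2}\|u\|_{L^2(B_r(y_j))}^2\bigr).
$$
The bounded overlap gives $\sum_j\|u'\|_{L^2(B_r(y_j))}^2\le 2\|u'\|_{L^2(\R)}^2$ and likewise for the $L^2$-term, whence $\|u\|_{L^2}^2\le C\,S_u\bigl(\|u'\|_{L^2}^2+r^{-2}\|u\|_{L^2}^2\bigr)$. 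Repeating the argument for $v$ with $S_v$, adding the two estimates, and bounding $S_u,S_v\le S_u+S_v$ produces the stated inequality with $C_1$ a fixed multiple of $C$ and $C_2=1$. I expect Step~2 to be the only real obstacle: the delicate point is obtaining the correct $r^{-2}$ weight on the lower-order term, which a naive use of the \emph{global} Faber--Krahn inequality on a ball misses, since restricting $u$ to $B_r(y)$ and extending by zero is inadmissible (it breaks $H^1$-regularity at the endpoints). The scaling normalization together with the local Gagliardo--Nirenberg inequality on the fixed unit ball bypasses this, and the covering and summation are then routine.
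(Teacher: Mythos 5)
Your argument is correct, but it takes a genuinely different route from the paper's. The paper's proof uses a \emph{continuous} averaging: it fixes a bump $\phi\in C_c^\infty(B_1(0))$ with $\|\phi\|_{L^2}=1$, forms the translates $\phi_{r,y}(x)=r^{-1/2}\phi((x-y)/r)$, computes $\int_{\R}\int_{\R}|\partial_x(\phi_{r,y}u)|^2\,dy\,dx$ by Fubini (controlling the cross terms with Young's inequality, which is where the constant $2$ in front of $r^{-2}\|\partial_x\phi\|_{L^2}^2\|(u,v)\|^2$ comes from), and then applies the whole-line Faber--Krahn inequality (Proposition \ref{faber}) to each localized function $\phi_{r,y}u$, whose support lies in $B_r(y)\cap\{u\neq0\}$; integrating in $y$ and using $\int|\phi_{r,y}(x)|^2\,dy=1$ recovers $\|u\|_{L^2}^2$ on the left. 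You instead discretize: a finitely overlapping cover by balls $B_r(y_j)$ containing a partition $\{Q_j\}$, a local Faber--Krahn-type inequality on each ball obtained from H\"older plus the interpolation (Gagliardo--Nirenberg) inequality on the \emph{bounded} interval $B_1(0)$ and a scaling step that correctly produces the $r^{-2}$ weight, and then summation using overlap at most $2$. Your local inequality, its rescaling, and the summation all check out, and your final bookkeeping $S_u(\cdots)+S_v(\cdots)\leq(S_u+S_v)(\cdots)$ gives exactly the stated form with $C_2=1$. The trade-off is minor: the paper's smooth cutoffs let it invoke its already-stated whole-line Faber--Krahn proposition verbatim (the cutoff being precisely what makes restriction-and-extension admissible), at the cost of the Fubini computation with cross terms; your version avoids that computation but requires the interpolation inequality on a bounded interval (standard, e.g.\ via an extension operator or a direct one-dimensional argument), and your closing remark correctly identifies why a naive restriction to $B_r(y)$ would fail. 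Both proofs are complete and yield the lemma as stated.
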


\begin{proof} Fix a real-valued  function $\phi \in C_{c}^{\infty}(\mathbb{R})$  with  $supp \ \phi \subseteq B_{1}(0)$ and $\|\phi\|_{L^{2}}=1$. Define for each  $r>0$  and $y \in \mathbb{R}$ the function
$$
\phi_{r, y} (x) = r^{-\frac{1}{2}}\phi\left( \frac{x-y}{r}\right).
$$
A direct calculation gives
\begin{equation}\label{des2}
\displaystyle \int  |\phi_{r, y} (x)|^{2} dx =1
\end{equation}
and
\begin{equation}\label{des3}
\displaystyle \int  |\partial_{x} \phi_{r, y} (x)|^{2} dx =r^{-2}\|\partial_{x} \phi\|_{L^{2}}^{2}.
\end{equation}
In addition, since $\phi_{r, y} \in C_{c}^{\infty}(\mathbb{R})$ and $ u$, $v \in H^{1}(\mathbb{R})$ we may  compute
\begin{align*}
\displaystyle \int_{\mathbb{R}^{2}} &| \partial_{x} (\phi_{r, y}(x)u(x))|^{2} dy dx   + \int_{\mathbb{R}^{2}} | \partial_{x}(\phi_{r, y}(x)v(x))|^{2} dy dx \\
&= \displaystyle \int_{\mathbb{R}^{2}} |u(x)|^{2} |\partial_{x} \phi_{r, y} (x)|^{2} dy dx + \int_{\mathbb{R}^{2}} | \partial_{x}u(x)|^{2}|\phi_{r, y}(x)|^{2}| dy dx \\
& \ \ \  + \int_{\mathbb{R}^{2}} |v(x)|^{2}|\partial_{x}\phi_{r, y}(x)|^{2} dy dx + \int_{\mathbb{R}^{2}} | \partial_{x}v(x)|^{2}|\phi_{r, y}(x)|^{2}| dy dx \\
& \ \ \  + \displaystyle  \int_{\mathbb{R}^{2}} 2u(x) \partial_{x} \phi_{r, y}(x) \phi_{r, y}(x)\partial_{x}u(x) dy dx + \displaystyle  \int_{\mathbb{R}^{2}} 2v(x) \partial_{x} \phi_{r, y}(x) \phi_{r, y}(x) \partial_{x} v(x) dydx  .
\end{align*}
From Fubini's theorem and  identities  \eqref{des2} and \eqref{des3}, we obtain
\begin{align}\label{des4}
\nonumber \displaystyle \int_{\mathbb{R}^{2}} &  | \partial_{x}(\phi_{r, y}(x)u(x))|^{2} dy dx + \int_{\mathbb{R}^{2}} |\partial_{x}(\phi_{r, y}(x)v(x))|^{2} dy dx  = \displaystyle r^{-2}\|\partial_{x}\phi\|_{L^{2}}^{2} \|(u,v)\|^{2}  + \|\partial_{x}( u, v)\|^{2}\\
& + \displaystyle  \int_{\mathbb{R}^{2}} 2u(x)\partial_{x}\phi_{r, y}(x) \phi_{r, y}(x)\partial_{x} u(x) dy dx  + \displaystyle  \int_{\mathbb{R}^{2}} 2v(x) \partial_{x}\phi_{r, y}(x) \phi_{r, y}(x) \partial_{x} v(x) dy dx .
\end{align}

Next, by using H\"older and Young's inequalities we estimate the two integrals on the right-hand side of \eqref{des4} as follows
\begin{align*}
\displaystyle  \int_{\mathbb{R}^{2}}& 2u(x) \partial_{x} \phi_{r, y}(x) \phi_{r, y}(x) \partial_{x} u(x) dy dx  + \displaystyle  \int_{\mathbb{R}^{2}} 2v(x) \partial_{x}\phi_{r, y}(x) \phi_{r, y}(x)  \partial_{x}v(x) dy dx\\
& \leq  2r^{-1}\| \partial_{x} \phi \|_{L^{2}} \|u\|_{L^{2}} \|\partial_{x}u\|_{L^{2}} + 2r^{-1}\| \partial_{x} \phi \|_{L^{2}}\|v\|_{L^{2}} \|\partial_{x}v\|_{L^{2}}\\
& \leq   r^{-2}\| \partial_{x} \phi \|_{L^{2}}^{2} \|(u,v)\|^{2} + \| \partial_{x}(u,v)\|^{2}.
\end{align*}
Replacing in (\ref{des4}), we deduce
\begin{align}\label{des5}
\displaystyle \int_{\mathbb{R}^{2}} |\partial_{x} (\phi_{r, y} (x)u(x))|^{2} dy dx + \int_{\mathbb{R}^{2}} | \partial_{x}(\phi_{r, y}(x)v(x))|^{2} dy dx  
\leq 2 \displaystyle r^{-2}\|\partial_{x}\phi\|_{L^{2}}^{2} \|(u,v)\|^{2}  + \|\partial_{x}( u, v)\|^{2}.
\end{align}

On the other hand, since $supp \ (\phi_{r, y}u)$, $supp \ (\phi_{r, y}v) \subseteq B_{r}(y)$, from the Faber-Krahn inequality (see Proposition \ref{faber}) it follows that
\begin{align*}
\int \left(|\partial_{x}(\phi_{r,y}(x)u(x))|^{2} +|\partial_{x}(\phi_{r,y}(x)v(x))|^{2}\right)dx & \geq C \left( | B_{r}(y) \cap \{u\neq 0\}|^{-2}\int |\phi_{r,y}(x)u(x)|^{2}dx \right. \\
& \ \ \ +  \left. | B_{r}(y) \cap \{v \neq 0\}|^{-2}\int |\phi_{r,y}(x)v(x)|^{2}dx \right)
\end{align*}
Combining this last relation with  \eqref{des5} we deduce
\begin{align*}
\|\partial_{x}(u,v)\|^{2}&  \geq  C \left( \int \left[| B_{r}(y) \cap \{u\neq 0\}|^{-2}\int |\phi_{r,y}(x)u(x)|^{2}dx\right]dy \right. \\
& \ \ \ \left.+ \int \left[ | B_{r}(y) \cap \{v \neq 0\}|^{-2}\int |\phi_{r,y}(x)v(x)|^{2}dx\right]dy \right) -2 r^{-2}\|\partial_{x}\phi\|_{L^{2}}^{2} \|(u,v)\|^{2}\\
& \geq C\left(\inf_{y_{1}}  | B_{r}(y_{1}) \cap \{u \neq 0\}|^{-2}\|u\|_{L^{2}}^{2} + \inf_{y_{2}}  |
B_{r}(y_{2}) \cap \{v \neq 0\}|^{-2}\|v\|_{L^{2}}^{2}\right)\\
&  \ \ \ -2 r^{-2}\|\partial_{x}\phi\|_{L^{2}}^{2} \|(u,v)\|^{2}\\
& \geq C \min \Big\{ \inf_{y_{1}}  | B_{r}(y_{1}) \cap \{u \neq 0\}|^{-2} ;\inf_{y_{2}}  |
B_{r}(y_{2}) \cap \{v \neq 0\}|^{-2} \Big\} \|(u,v)\|^{2} \\
& \ \ \ -2 r^{-2}\|\partial_{x}\phi\|_{L^{2}}^{2} \|(u,v)\|^{2}.
\end{align*}
This last inequality gives us the desired.
\end{proof}

Finally we have all necessary tools to establish the desired compactness  criterion for bounded sequences in  $H^{1}(\mathbb{R}) \times H^{1}(\mathbb{R})$.

\begin{lemma}[Lieb's translation Lemma]\label{lieb}
Let $(u_{n},v_{n}) \subset H^{1}(\mathbb{R}) \times H^{1}(\mathbb{R}) $ be a bounded sequence. If there are positive numbers $\epsilon$ and $\delta $ such that
\begin{equation}\label{hiplieb}
\big(|\{ |u_{n}(x)| > \epsilon   \} |+| \{ |v_{n}(x)| > \epsilon   \} |\big) \geq  \delta, \ \  \ n \ \in \ \mathbb{N},
\end{equation}
then  there is a  sequence $( y^{1}_{n}, y^{2}_{n})   \subset \mathbb{R} \times \mathbb{R}$ such that, up to a subsequence,
$$
(\widetilde{u}_{n}, \widetilde{v}_{n}) := (u_{n}(\cdot + y^{1}_{n}), v_{n} (\cdot + y^{2}_{n})) \rightharpoonup (\Phi, \Psi) \ \  \textrm{in} \ \  H^{1}(\mathbb{R}) \times H^{1}(\mathbb{R}), 
$$
where $(\Phi , \Psi) \neq (0,0)$.
\end{lemma}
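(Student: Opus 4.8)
The plan is to reduce the vector statement to a single scalar component, to establish that this component concentrates in a fixed-size interval uniformly in $n$ by means of Lemma~\ref{estimatesL2}, and then to read off a nontrivial weak limit through the compact embedding $H^{1}\hookrightarrow L^{2}_{loc}$. First I would exploit the dichotomy built into hypothesis \eqref{hiplieb}: for every $n$ at least one of $|\{|u_{n}|>\epsilon\}|$ or $|\{|v_{n}|>\epsilon\}|$ is $\geq\delta/2$, so one of the two inequalities holds along a subsequence. Relabelling, I may assume $|\{|u_{n}|>\epsilon\}|\geq\delta/2$ for all $n$. It then suffices to produce a translation $y^{1}_{n}$ for which $u_{n}(\cdot+y^{1}_{n})$ has a nonzero weak limit, since $v_{n}$ can be translated by any bounded sequence (e.g.\ $y^{2}_{n}\equiv0$) and a further subsequence yields $v_{n}\rightharpoonup\Psi$; the pair $(\Phi,\Psi)$ is then automatically nonzero.

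The heart of the argument is to show that the level sets of $u_{n}$ cannot spread out, i.e.\ that there exist $r>0$ and $\kappa>0$ and points $y^{1}_{n}$ with $|B_{r}(y^{1}_{n})\cap\{|u_{n}|>\epsilon/2\}|\geq\kappa$ for all $n$. I would introduce the truncation $w_{n}:=(|u_{n}|-\tfrac{\epsilon}{2})_{+}$, which lies in $H^{1}(\R)$ with $\|\partial_{x}w_{n}\|_{L^{2}}\leq\|\partial_{x}u_{n}\|_{L^{2}}$ and $\{w_{n}\neq0\}=\{|u_{n}|>\epsilon/2\}$. On $\{|u_{n}|>\epsilon\}$ one has $w_{n}>\epsilon/2$, so a Chebyshev-type estimate gives $\|w_{n}\|_{L^{2}}^{2}\geq\tfrac{\epsilon^{2}}{4}|\{|u_{n}|>\epsilon\}|\geq\tfrac{\epsilon^{2}\delta}{8}>0$. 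Applying Lemma~\ref{estimatesL2} to the pair $(w_{n},0)$, and using the $H^{1}$-bound to control the bracket $\|\partial_{x}w_{n}\|^{2}+C_{2}r^{-2}\|w_{n}\|^{2}$ by a constant for fixed $r$, the lower bound on $\|w_{n}\|_{L^{2}}^{2}$ forces $\sup_{y}|B_{r}(y)\cap\{|u_{n}|>\epsilon/2\}|^{2}$ to be bounded below by a positive constant independent of $n$; choosing a near-maximizer yields the desired $y^{1}_{n}$ and $\kappa$.

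Finally I would set $\widetilde{u}_{n}:=u_{n}(\cdot+y^{1}_{n})$. Boundedness in $H^{1}(\R)$ gives, up to a subsequence, $\widetilde{u}_{n}\rightharpoonup\Phi$ in $H^{1}(\R)$ and, by the compact embedding $H^{1}(B_{r}(0))\hookrightarrow L^{2}(B_{r}(0))$, strong convergence in $L^{2}(B_{r}(0))$. If $\Phi\equiv0$ on $B_{r}(0)$ then $\int_{B_{r}(0)}|\widetilde{u}_{n}|^{2}\to0$; but Chebyshev together with the concentration bound gives $\int_{B_{r}(0)}|\widetilde{u}_{n}|^{2}\geq\tfrac{\epsilon^{2}}{4}\,|B_{r}(0)\cap\{|\widetilde{u}_{n}|>\epsilon/2\}|\geq\tfrac{\epsilon^{2}}{4}\kappa>0$, a contradiction. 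Hence $\Phi\neq0$ and $(\Phi,\Psi)\neq(0,0)$, as required.

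The main obstacle is precisely the concentration step: the hypothesis controls only the measure of the level sets, which says nothing a priori about where the mass is located, so these sets could in principle disperse along $\R$ and the weak limit could vanish. Lemma~\ref{estimatesL2} is exactly the tool that rules this out, but only after passing to the truncated functions $w_{n}$; the delicate point is to truncate at level $\epsilon/2$ rather than at $\epsilon$, so that the truncated functions retain a uniform lower bound on their $L^{2}$ mass while their support stays inside a level set of positive measure.
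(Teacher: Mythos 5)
Your proof is correct and follows essentially the same route as the paper's: truncation at level $\epsilon/2$, Chebyshev for the lower $L^{2}$ bound, Lemma~\ref{estimatesL2} to force concentration on a ball, a near-maximizing translation, and Rellich compactness to rule out a vanishing weak limit. The only (harmless) difference is that you first use a pigeonhole argument to reduce to a single component and translate the other by $y^{2}_{n}\equiv 0$, whereas the paper keeps the two components coupled via the summed form of Lemma~\ref{estimatesL2} and concludes through $L^{1}_{loc}$ rather than $L^{2}_{loc}$ convergence.
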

\begin{proof}
Define $(w_{n}, z_{n}) = \left(\left(|u_{n}| - \frac{\epsilon}{2}\right)_{+},\left( |v_{n}| - \frac{\epsilon}{2}\right)_{+} \right)$, where $f_+$ denotes the positive part of $f$. Note that 
\begin{equation*}
\|\partial_{x} (w_{n}, z_{n})\|^{2} \leq \| \partial_{x}(u_{n}, v_{n})\|^{2} \leq K,
\end{equation*}
for some positive constant $K$. Moreover, combining Proposition \ref{chebyshev} with \eqref{hiplieb} we obtain
\begin{equation}\label{lieb2}
\begin{split}
\displaystyle \|(w_{n},z_{n})\|^{2}  
& \geq \left(\frac{\epsilon}{2}\right)^{2} \left(| \{ |u_{n}| > \epsilon \} | + | \{ |v_{n}| > \epsilon \} | \right) \geq  \left(\frac{\epsilon}{2}\right)^{2} \delta > 0.
\end{split}
\end{equation}

An application of  Lemma \ref{estimatesL2} with $(u,v) = (w_{n},z_{n})$ and $r=1$ yields
\begin{align*}
\displaystyle\|(w_{n}, z_{n})\|^{2}&  \leq   C_{1}\left[  \| \partial_{x}(w_{n}, z_{n})\|^{2} + C_{2}  \|(w_{n},z_{n})\|^{2}  \right]   \\
& \quad \times  \left( \sup_{y_{1} \in \mathbb{R}} |B_{1}(y_{1}) \cap \{ w_{n} \neq 0 \} |^{2} + \sup_{y_{2} \in \mathbb{R}} |B_{1}(y_{2}) \cap \{z_{n} \neq 0 \} |^{2} \right)\\
& \leq  C_{1} \left( K + C_{2}\|(w_{n},z_{n})\|^{2} \right) \\
& \quad \times \displaystyle  \left( \sup_{y_{1} \in \mathbb{R}} | B_{1}(y_{1}) \cap \{w_{n} > 0\} |^{2} +  \sup_{y_{2} \in \mathbb{R}} | B_{1}(y_{2}) \cap \{z_{n} > 0\} |^{2} \right),
\end{align*}
where in the second inequality we used the fact that  $(w_{n},z_{n})$ is nonnegative.
Now, multiplying the last inequality by $\|(w_{n},z_{n})\|^{-2}$ and using \eqref{lieb2}, we have
\begin{align*}
1 
& \leq C_{3}  \left( \sup_{y_{1} \in \mathbb{R}} | B_{1}(y_{1}) \cap \{w_{n} > 0\} | +  \sup_{y_{2} \in \mathbb{R}} | B_{1}(y_{2}) \cap \{z_{n} > 0\} |\right)^{2} ,
\end{align*}
for some positive constant $C_{3}$ (depending on $K,\epsilon$ and $\delta$).
Thus, for each $n \in \mathbb{N}$, we may take $y^{1}_{n}$, $y^{2}_{n} \in \mathbb{R}$ such that
\begin{align*}
\frac{1}{2}  \leq C_{3}  \left( | B_{1}(y^{1}_{n}) \cap \{w_{n} > 0\} | +  | B_{1}(y^{2}_{n}) \cap \{z_{n} > 0\} | \right).
\end{align*}

Next, by using Chebyshev's inequality again we obtain 
\begin{align*}
\displaystyle \|(u_{n}, v_{n})\|_{L^{1}(B_{1}(y^{1}_{n})) \times L^{1}(B_{1}(y^{2}_{n }))} & \geq 
 \frac{\epsilon}{2}  \left( | B_{1}(y^{1}_{n}) \cap \{|u_{n}| > \frac{\epsilon}{2} \} | +  | B_{1}(y^{2}_{n}) \cap \{|v_{n}| > \frac{\epsilon}{2} \} | \right)\\
&   = 
\frac{\epsilon}{2}  \left( | B_{1}(y^{1}_{n}) \cap \{w_{n} > 0 \} | +  | B_{1}(y^{2}_{n}) \cap \{z_{n} > 0 \} | \right) \\
& \geq \frac{\epsilon}{2} \left(\frac{1}{2C_{3}}\right) = C_{4} > 0.
\end{align*}
Thus, defining $(\widetilde{u}_{n}(x), \widetilde{v}_{n}(x)) := ( u_{n} (x + y^{1}_{n}), v_{n}(x + y^{2}_{n}))$ this last inequality implies that
\begin{equation}\label{lieb3}
\displaystyle \| (\widetilde{u}_{n}, \widetilde{v}_{n})\|_{L^{1}(B_{1}(0)) \times L^{1}(B_{1}(0))} = \|(u_{n}, v_{n})\|_{L^{1}(B_{1}(y^{1}_{n})) \times L^{1}(B^{1}(y^{2}_{n}))} \geq C_{4}.
\end{equation}

Finally, since $ ( \widetilde{u}_{n},\widetilde{v}_{n}) $  is a bounded sequence in $H^{1}(\mathbb{R}) \times H^{1}(\mathbb{R})$, there exists $ (\Phi, \Psi) \in H^{1}(\mathbb{R}) \times H^{1}(\mathbb{R})$ such that, up to a subsequence, $(\widetilde{u}_{n}, \widetilde{v}_{n}) \rightharpoonup (\Phi,\Psi)$. Since weak convergence in turn implies  strong convergence  in $L_{loc}^{p}(\mathbb{R}) \times L_{loc}^{p}(\mathbb{R})$ for all $p \in [1, \infty)$ (see Theorem 8.6 in \cite{lieb1997analysis}), we deduce that $(\widetilde{u}_{n},\widetilde{v}_{n}) \rightarrow (\Phi,\Psi)$ in $L^{1}(B_{1}(0)) \times L^{1}(B_{1}(0))$. From (\ref{lieb3}), we then conclude that $(\Phi, \Psi) \neq (0,0)$.
\end{proof}

\begin{remark}
From Theorem \ref{teopqr}, the hypothesis \eqref{hiplieb} is satisfied by any bounded sequence in $H^{1}(\mathbb{R}) \times H^{1}(\mathbb{R})$ and unbounded in $L^{q}(\mathbb{R}) \times L^{q}(\mathbb{R})$ for some $q \in (2, \infty)$.
\end{remark}

\subsection{Existence of ground states}

The sequence $(u_{n}, v_{n})$ obtained in Proposition \ref{existencePS} satisfies \eqref{norm1}-\eqref{norm3}. Then, Proposition \ref{convegencederivative} and the $pqr$ theorem guarantees that such a sequence is in the assumptions of the Lieb translation lemma. Consequently, we obtain a subsequence such that, up to a translation, it converges weakly in $H^{1}(\mathbb{R}) \times H^{1}(\mathbb{R})$ to some point $(\Phi, \Psi) \neq (0,0)$. The idea now is to show that $(\Phi, \Psi)$ is indeed as ground state. To do so, we introduce the  \textit{Nehari} manifold
\begin{equation*}
\mathcal{N} = \{ (u,v) \in  H^{1}(\mathbb{R}) \times H^{1}(\mathbb{R})\setminus \{(0,0)\} \, : \, I'(u,v)(u,v) =0 \}.
\end{equation*}

\begin{remark}
From \eqref{derivativeI} and \eqref{functionh2},
$$
I'(u,v)(u,v)=\|(u,v)\|_{H^1\times H^1}^2-(2k+2)P(u,v),
$$
which immediately gives that $\mathcal{N} \subset \mathcal{P}$. In addition, if $(u,v)\in \mathcal{N}$ then
\begin{equation}\label{igual}
\|(u,v)\|_{H^1\times H^1}^2=(2k+2)P(u,v).
\end{equation}
\end{remark} 

\begin{lemma}\label{lemacriticalpoint}
The following statements hold.
\begin{itemize}
\item[(i)] $(0,0)$ is a strict local \textit{minimum} of  $I$;
\item[(ii)]$I(\ell u,\ell v) < 0$ for every $(u,v) \in \mathcal{N}$ and for $\ell$ sufficiently large.
\end{itemize}
\end{lemma}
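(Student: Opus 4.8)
The plan is to prove both items directly from the explicit form of $I$, using the Gagliardo–Nirenberg type inequality \eqref{inequalityGNtypeint} for the local minimality in (i) and the homogeneity of $H$ for the sign change in (ii). Neither part requires the variational machinery developed above; they are elementary consequences of the scaling structure of the functional.

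For part (i), I would begin from the lower bound already recorded in the proof of Proposition \ref{existencePS}, namely estimate \eqref{pa1},
$$
I(u,v)\geq \frac{1}{2}\|(u,v)\|_{H^{1}\times H^{1}}^{2} - C_{0}\|(u,v)\|_{H^{1}\times H^{1}}^{2k+2}.
$$
Factoring out the quadratic term gives
$$
I(u,v)\geq \|(u,v)\|_{H^{1}\times H^{1}}^{2}\left(\frac{1}{2} - C_{0}\|(u,v)\|_{H^{1}\times H^{1}}^{2k}\right).
$$
Since $k\geq1$, the factor in parentheses is strictly positive whenever $0<\|(u,v)\|_{H^{1}\times H^{1}}<(2C_{0})^{-1/(2k)}=:\rho$. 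As $I(0,0)=0$, this shows $I(u,v)>I(0,0)$ for every $(u,v)\neq(0,0)$ in the ball of radius $\rho$, which is exactly the statement that $(0,0)$ is a strict local minimum.

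For part (ii), I would exploit that each monomial in $H$ (see \eqref{functionHint}) has total degree $2k+2$, so that $H$, and hence $P$, is homogeneous of degree $2k+2$, i.e. $P(\ell u,\ell v)=\ell^{2k+2}P(u,v)$ for $\ell>0$. Therefore
$$
I(\ell u,\ell v)=\frac{\ell^{2}}{2}\|(u,v)\|_{H^{1}\times H^{1}}^{2} - \ell^{2k+2}P(u,v).
$$
If $(u,v)\in\mathcal{N}$, then $\mathcal{N}\subset\mathcal{P}$ guarantees $P(u,v)>0$, and since $2k+2>2$ the negative term dominates as $\ell\to\infty$; in fact $I(\ell u,\ell v)\to-\infty$. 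In particular $I(\ell u,\ell v)<0$ for all $\ell$ sufficiently large, as claimed.

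The argument is essentially a computation and presents no genuine obstacle; the only point deserving care is to record explicitly the homogeneity degree of $H$, since it is precisely the inequality $2k+2>2$ that forces both the positivity of the bracket near the origin in (i) and the eventual sign change of $I$ along the rays $\ell\mapsto(\ell u,\ell v)$ in (ii).
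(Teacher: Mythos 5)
Your proof is correct and follows the same route as the paper: part (i) is read off from the lower bound \eqref{pa1}, and part (ii) from the explicit scaling formula for $I(\ell u,\ell v)$ together with $\mathcal{N}\subset\mathcal{P}$. You merely spell out the details that the paper leaves implicit.
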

\begin{proof} 
Part (i) follows immediately from \eqref{pa1}.
Now, observe that for any $\ell >0$,
\begin{equation*}
\begin{split}
I(\ell u,\ell v) &=  \displaystyle \frac{\ell^{2}}{2}\|(u,v)\|_{H^{1} \times H^{1} }^{2} -\ell^{2k+2}  P(u,v) . \\
\end{split}
\end{equation*}
Since $(u,v) \in \mathcal{N}\subset \mathcal{P}$, we have $- P(u,v) < 0$, so that (ii) follows.
\end{proof}

\begin{proposition}\label{lemacriticalindice}
For every $(u,v) \in \mathcal{P}$ there exists a unique number $\overline{\ell} >0$ such that $(\overline{\ell} u, \overline{\ell } v) \in \mathcal{N}$ and   $\displaystyle \max_{t \geq0} I(t u,t v)  = I(\overline{\ell} u,\overline{\ell} v)$.
\end{proposition}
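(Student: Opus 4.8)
The plan is to reduce this two-dimensional optimization along a ray to an elementary one-variable problem by exploiting the homogeneity of $H$. Since $H$ in \eqref{functionHint} is a homogeneous polynomial of degree $2k+2$, we have $P(tu,tv)=t^{2k+2}P(u,v)$ for every $t\geq0$. Thus, fixing $(u,v)\in\mathcal{P}$ and writing $A:=\|(u,v)\|_{H^1\times H^1}^2>0$ and $B:=P(u,v)>0$ (where $B>0$ precisely because $(u,v)\in\mathcal{P}$), the scalar function
$$
h(t):=I(tu,tv)=\frac{A}{2}t^2-Bt^{2k+2},\qquad t\geq0,
$$
encodes all the information we need.

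First I would compute $h'(t)=At-(2k+2)Bt^{2k+1}=t\bigl(A-(2k+2)Bt^{2k}\bigr)$ and locate its positive zeros. Since $2k+2>2$ and $A,B>0$, the factor $A-(2k+2)Bt^{2k}$ is strictly decreasing on $[0,\infty)$ and vanishes at exactly one point
$$
\overline{\ell}=\left(\frac{A}{(2k+2)B}\right)^{\frac{1}{2k}}>0.
$$
Consequently $h'(t)>0$ on $(0,\overline{\ell})$ and $h'(t)<0$ on $(\overline{\ell},\infty)$, so $\overline{\ell}$ is the unique maximizer of $h$ on $[0,\infty)$. This yields $\displaystyle\max_{t\geq0}I(tu,tv)=I(\overline{\ell}u,\overline{\ell}v)$, which is the second assertion.

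It remains to verify the Nehari membership and uniqueness. Using the identity recorded in the Remark following the definition of $\mathcal{N}$, namely $I'(w,z)(w,z)=\|(w,z)\|_{H^1\times H^1}^2-(2k+2)P(w,z)$, I evaluate along the ray to get $I'(tu,tv)(tu,tv)=t^2A-(2k+2)t^{2k+2}B=t^2\bigl(A-(2k+2)Bt^{2k}\bigr)$. For $t>0$ this vanishes if and only if $t^{2k}=A/((2k+2)B)$, that is, exactly when $t=\overline{\ell}$. This simultaneously shows $(\overline{\ell}u,\overline{\ell}v)\in\mathcal{N}$ and that $\overline{\ell}$ is the only positive scaling carrying $(u,v)$ into $\mathcal{N}$, giving the claimed uniqueness.

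The argument is entirely elementary once the scaling is in place, so I do not expect a genuine obstacle; the only point carrying real content is the homogeneity relation $P(tu,tv)=t^{2k+2}P(u,v)$, which collapses the problem onto the explicitly solvable profile $h(t)=\frac{A}{2}t^2-Bt^{2k+2}$. I would state that homogeneity explicitly at the outset and then let the single-variable calculus conclude both existence, uniqueness, and the maximality of $\overline{\ell}$ at once.
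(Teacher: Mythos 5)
Your proof is correct and follows essentially the same route as the paper: both reduce to the one-variable function $h(\ell)=I(\ell u,\ell v)=\frac{\ell^2}{2}\|(u,v)\|_{H^1\times H^1}^2-\ell^{2k+2}P(u,v)$, use the identity $I'(\ell u,\ell v)(\ell u,\ell v)=\ell h'(\ell)$, and identify the unique positive critical point $\overline{\ell}=\bigl(\|(u,v)\|_{H^1\times H^1}^2/((2k+2)P(u,v))\bigr)^{1/2k}$ as the maximizer and the unique scaling landing on $\mathcal{N}$. Your write-up is simply a more explicit version of the paper's argument, with the sign analysis of $h'$ spelled out.
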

\begin{proof}
Let
$h:  \left[0, \infty\right)  \rightarrow  \mathbb{R}$ be defined as $h(\ell)=I(\ell u,\ell v)$.
From \eqref{derivativeI}, we have
\begin{equation*}
\begin{split}
\displaystyle I'(\ell u,\ell v)(\ell u,\ell v) 
& \displaystyle = \ell \left( \ell \|(u,v)\|_{H^{1} \times H^{1} }^{2} - (2k+2) \ell^{2k+1} P(u,v) dx \right) \displaystyle = \ell h'(\ell).
\end{split}
\end{equation*}
It is simple matter to check that
\begin{equation}\label{lmaximo}
\displaystyle \overline{\ell} = \left( \frac{ \|(u,v)\|_{H^{1} \times H^{1} }^{2}}{\displaystyle (2k+2) P(u,v)} \right)^{\frac{1}{2k}}
\end{equation}
is the unique positive critical point of $h$. In addition since $\overline{ \ell }$ is clearly a maximum point we obtain the desired.
\end{proof}

Now we introduce the ``Nehari level'' as
\begin{equation}\label{neharilevel}
  \omega_{\mathcal{N}} = \displaystyle\inf_{\mathcal{N}} I(u,v). 
\end{equation}

Next result shows the mountain pass level and Nehari level are the same.
\begin{lemma}\label{idindices}
Let $\omega$ be defined in \eqref{mim3}. Under the above notation, there holds $\omega_{\mathcal{N}} =  \omega$.
\end{lemma}
\begin{proof}
Let us first prove that $\omega \geq \omega_{\mathcal{N}}$.
Let $\gamma \in \Gamma$. Since $t\mapsto I(\gamma(t)) $ is a continuous function on $[0,1]$ and $\gamma(0)=(0,0)$ is a strict local minimum of $I$ it follows that $I(\gamma(t)) > 0$ for small $t$. By continuity, $\gamma$ crosses $\mathcal{N}$, that is, there exists ${\ell_{0}} \in (0,1)$ such that  $\gamma({\ell_{0}}) \in \mathcal{N}$. Hence,
$$
\omega_{\mathcal{N}} \leq I(\gamma(\ell_{0}) \leq \max_{\ell \in [0,1]} I(\gamma ( \ell )).
$$
Since this inequality holds for any $\gamma\in\Gamma$ we have the desired.

Next we show that $\omega \leq \omega_{\mathcal{N}}$. Take any $(u,v)\in\mathcal{N}$.  From Lemma \ref{lemacriticalpoint} there exists $\ell$ sufficiently large such that $I(\ell u,\ell v)<0.$ Since $(u,v)\in \mathcal{N}\subset\mathcal{P}$ and $H$ is homogeneous of degree $2k+2$ it follows that $(\ell u,\ell v)\in\mathcal{P}$. Thus, from the proof of Proposition  \ref{lemacriticalindice} the number
\begin{equation}\label{lmaximo1}
\displaystyle \overline{\ell} = \left( \frac{ \|(u,v)\|_{H^{1} \times H^{1} }^{2}}{\displaystyle (2k+2)\ell^{2k} P(u,v)} \right)^{\frac{1}{2k}}
\end{equation}
is such that $(\overline{\ell} \ell u, \overline{\ell }\ell  v) \in \mathcal{N}$ and   $\displaystyle \max_{t \geq0} I(t\ell u,t \ell v)  = I(\overline{\ell}\ell u,\overline{\ell}\ell v)$. By defining $\gamma(t)=(t\ell u,t \ell v)$ we see that $\gamma\in\Gamma$ and
\begin{equation*}
\omega\leq \max_{t \in [0,1]} I(\gamma(t))\leq \max_{t \geq0} I(t\ell u,t \ell v)  = I(\overline{\ell}\ell u,\overline{\ell}\ell v).
\end{equation*}
But from \eqref{lmaximo1} and \eqref{igual} we deduce that $\overline{ \ell }\ell=1$, implying that $\omega\leq I(u,v)$. The proof is thus completed.
\end{proof}

In view of Lemma \ref{idindices} we are able to establish that the {infimum} of $I$ is indeed achieved. 

\begin{theorem}[Existence of ground states]\label{teodeexistenciags}
There exists at least one {ground state} solution  for the elliptic system \eqref{sistemaeliptico2}.
\end{theorem}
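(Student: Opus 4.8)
The plan is to show that the nontrivial weak limit produced by the compactness machinery of this section is exactly a ground state. First I would recall that the $(PS)_\omega$-sequence $(u_n,v_n)$ obtained in Proposition \ref{existencePS} is bounded in $H^1(\R)\times H^1(\R)$ by Proposition \ref{convegencederivative}, and that, by Proposition \ref{estimatenorm} combined with the $pqr$ Theorem \ref{teopqr}, it satisfies the nonvanishing hypothesis \eqref{hiplieb} of Lieb's lemma. Applying Lemma \ref{lieb} I obtain translations $(y_n^1,y_n^2)$ so that, along a subsequence, $(\widetilde u_n,\widetilde v_n):=(u_n(\cdot+y_n^1),v_n(\cdot+y_n^2))\rightharpoonup(\Phi,\Psi)$ in $H^1(\R)\times H^1(\R)$ with $(\Phi,\Psi)\neq(0,0)$. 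Since both $I$ and $I'$ are invariant under spatial translations, $(\widetilde u_n,\widetilde v_n)$ is again a $(PS)_\omega$-sequence, and I would work with it from now on.

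Next I would verify that $(\Phi,\Psi)$ is a weak solution of \eqref{sistemaeliptico2}, i.e. $I'(\Phi,\Psi)=0$. For a fixed test pair $(w,z)\in H^1(\R)\times H^1(\R)$, the linear and derivative terms in \eqref{derivativeI} pass to the limit directly from the weak convergence. For the nonlinear terms I would use that, by Rellich's theorem and a further subsequence, $(\widetilde u_n,\widetilde v_n)\to(\Phi,\Psi)$ a.e., so that $f(\widetilde u_n,\widetilde v_n)\to f(\Phi,\Psi)$ and $g(\widetilde u_n,\widetilde v_n)\to g(\Phi,\Psi)$ a.e.; since $f$ and $g$ are homogeneous of degree $2k+1$ and the sequence is bounded in every $L^r(\R)$, $r\geq2$, these nonlinearities are bounded in $L^{(2k+2)/(2k+1)}(\R)$ and hence converge weakly in that space to $f(\Phi,\Psi)$ and $g(\Phi,\Psi)$. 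Pairing against $w,z\in H^1(\R)\hookrightarrow L^{2k+2}(\R)$ gives $I'(\widetilde u_n,\widetilde v_n)(w,z)\to I'(\Phi,\Psi)(w,z)$; as the left-hand side tends to $0$, I conclude $I'(\Phi,\Psi)=0$. In particular $(\Phi,\Psi)$ is a nontrivial critical point, so $(\Phi,\Psi)\in\mathcal N$ and therefore $I(\Phi,\Psi)\geq\inf_{\mathcal N}I=\omega_{\mathcal N}=\omega$, the last equality being Lemma \ref{idindices}.

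It remains to establish the matching upper bound $I(\Phi,\Psi)\leq\omega$. On $\mathcal N$, identity \eqref{igual} yields $I(u,v)=\frac{k}{2k+2}\|(u,v)\|_{H^1\times H^1}^2$, so minimizing $I$ over $\mathcal N$ is the same as minimizing the $H^1\times H^1$-norm. By weak lower semicontinuity of the norm together with Proposition \ref{convegencederivative}(i),
\[
\|(\Phi,\Psi)\|_{H^1\times H^1}^2\leq\liminf_{n\to\infty}\|(\widetilde u_n,\widetilde v_n)\|_{H^1\times H^1}^2=\frac{2k+2}{k}\,\omega .
\]
Since $(\Phi,\Psi)\in\mathcal N$, this gives $I(\Phi,\Psi)=\frac{k}{2k+2}\|(\Phi,\Psi)\|_{H^1\times H^1}^2\leq\omega$, whence $I(\Phi,\Psi)=\omega=\omega_{\mathcal N}$. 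Finally, every nontrivial critical point of $I$ lies in $\mathcal N$, so $\inf\{I(u,v):(u,v)\neq(0,0),\,I'(u,v)=0\}\geq\omega_{\mathcal N}=I(\Phi,\Psi)$, while the reverse inequality is immediate because $(\Phi,\Psi)$ is itself such a critical point. Thus $(\Phi,\Psi)$ realizes the infimum in Definition \ref{groundstate} and is a ground state.

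The main obstacle is the loss of compactness caused by the translation invariance of the system on the whole line: the $(PS)_\omega$-sequence need not converge, and even after translating one only gets weak convergence, which is insufficient to pass the nonlinear functional $P$ to the limit. This is precisely what the nonvanishing estimate of Proposition \ref{estimatenorm}, the $pqr$ Theorem, and Lieb's lemma are designed to overcome, securing the nontriviality of the translated weak limit. The remaining delicate point is the energy identification $I(\Phi,\Psi)=\omega$, which I would handle through weak lower semicontinuity of the \emph{norm} on the Nehari manifold rather than of $I$ itself, since $I$ fails to be weakly lower semicontinuous because of the $-P$ term.
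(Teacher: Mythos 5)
Your argument is correct, and the first half (compactness via Proposition \ref{estimatenorm}, the $pqr$ theorem, Lieb's lemma, and the verification that $I'(\Phi,\Psi)=0$) follows the same path as the paper, differing only in the technical device for passing the nonlinearities to the limit: you use a.e.\ convergence plus boundedness of $f(\widetilde u_n,\widetilde v_n)$, $g(\widetilde u_n,\widetilde v_n)$ in $L^{(2k+2)/(2k+1)}(\R)$ to get weak convergence against $w,z\in L^{2k+2}(\R)$, while the paper tests against $C_c^\infty$ functions and exploits strong convergence in $L^q_{loc}$. Where you genuinely diverge is in the energy identification $I(\Phi,\Psi)\leq\omega$. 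The paper proves this by showing that the remainder $(u_n-\Phi,\,v_n-\Psi)$ is a $(PS)_{\omega-I(\Phi,\Psi)}$-sequence --- which requires a lengthy term-by-term verification that all the mixed integrals $\int \widehat u_n^{\alpha}\widehat v_n^{\beta}\Phi^{\tilde\alpha}\Psi^{\tilde\beta}\,dx$ vanish in the limit --- and then invokes Proposition \ref{convegencederivative} to conclude that the level $\omega-I(\Phi,\Psi)$ is nonnegative. You instead observe that on the Nehari manifold $I$ reduces to $\frac{k}{2k+2}\|\cdot\|_{H^1\times H^1}^2$ by \eqref{igual}, so that weak lower semicontinuity of the norm combined with the limit $\|(\widetilde u_n,\widetilde v_n)\|_{H^1\times H^1}^2\to\frac{2k+2}{k}\omega$ from Proposition \ref{convegencederivative}(i) gives the bound directly. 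Your route is shorter and cleaner, sidestepping the splitting computations entirely, and it correctly isolates the reason $I$ itself cannot be used (failure of weak lower semicontinuity through the $-P$ term); the paper's splitting argument is heavier but yields the extra structural information that the Palais--Smale sequence decomposes as $(\Phi,\Psi)$ plus a sequence carrying the residual energy $\omega-I(\Phi,\Psi)$, which is the standard profile-decomposition viewpoint and is reusable in other contexts. Both arguments are complete proofs of the theorem.
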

\begin{proof} Let $(u_{n},v_{n})$ be the $(PS)_{\omega}$-sequence provided by Proposition \ref{existencePS}. As we already said, in view of \eqref{norm1}-\eqref{norm3} and the \textit{pqr}  theorem  we can apply Lieb's translation lemma to obtain $(\Phi, \Psi) \neq (0,0)$ such that, up to a subsequence and translation, (see Theorem 8.6 and Corollary 8.7 in \cite{lieb1997analysis}),
\begin{equation*}
(u_{n},v_{n}) \rightharpoonup (\Phi, \Psi) \ \  \mbox{in} \ \  H^{1}(\mathbb{R}) \times H^{1}(\mathbb{R}),
\end{equation*}
\begin{equation*}
(u_{n},v_{n}) \longrightarrow (\Phi, \Psi) \ \  \mbox{q.t.p.}\;\mbox{in}\;\R,
\end{equation*}
and
\begin{equation}\label{convergencegs3}
(u_{n},v_{n}) \longrightarrow (\Phi, \Psi) \ \  \mbox{in} \ \ L^{q}_{loc}(\mathbb{R}) \times  L^{q}_{loc}(\mathbb{R}), \ \ \ q \ \in [1, \infty).
\end{equation}

Let us now show that $(\Phi, \Psi)$ is a ground state. Since $\omega=\omega_{\mathcal{N}}$, we need to establish the following.
\begin{itemize}
\item[1)]  $I'(\Phi, \Psi)=0$ 
\item[2)]  $\omega = I(\Phi, \Psi)$.
\end{itemize}
We split the proof in two steps. 
\vskip.2cm
\noindent \textbf{Step 1.} $I'(\Phi, \Psi) =0$.

By density, it is sufficient to prove that
\begin{equation*}
\left[I'(u_{n}, v_{n}) - I'(\Phi, \Psi) \right](w,z) \longrightarrow 0 \ , \ \mbox{for any} \ (w,z) \in C_{c}^{\infty}(\mathbb{R}) \times C_{c}^{\infty}(\mathbb{R}) .
\end{equation*}
To this end, note that by the weak convergence  the first two integrals in \eqref{derivativeI} applied to $(u,v) =(u_{n}, v_{n})$ satisfy
\begin{align*}
\int  (u_{n}w + v_{n}z) dx + \int [\partial_{x}u_{n} \partial_{x} w + \partial_{x} v_{n} \partial_{x}z] dx \longrightarrow  \int (\Phi w + \Psi z) dx + \int [\partial_{x} \Phi  \partial_{x} w + \partial_{x} \Psi \partial_{x} z]dx 
\end{align*}
Thus in order to conclude this step it suffices to prove that
$$
 \int \Big(f(u_{n}, v_{n})w + g(u_{n}, v_{n})z\Big) dx \longrightarrow  \int \Big(f(\Phi, \Psi) w + g(\Phi, \Psi) z \Big)dx,
$$
This last convergence follows once we establish (for instance) that
$$
\left| \int (u_{n}^{\alpha} v_{n}^{\beta}   - \Phi^{\alpha}\Psi^{\beta}) w dx \right| \longrightarrow 0.
$$
where $\alpha$ and $\beta$ are positive real numbers such that $\alpha + \beta = 2k+1$. Since $k \geq 2$, we can assume without loss of generality $\beta \geq  1$ and $\beta>\alpha$. Note we can rewrite
$$
u_{n}^{\alpha} v_{n}^{\beta} - \Phi^{\alpha}\Psi^{\beta} = u_{n}^{\alpha}(v_{n}^{\alpha} - \Psi^{\alpha})v_{n}^{\beta - \alpha} + v_{n}^{\beta - \alpha} (u_{n}^{\alpha} - \Phi^{\alpha})\Psi^{\alpha} + \Phi^{\alpha} \Psi^{\alpha} (v_{n}^{\beta -\alpha} - \Psi^{\beta - \alpha}).
$$ 
Taking $\Omega = supp \, w \cup supp \, z$, from  \eqref{convergencegs3}, we obtain 
\begin{align*}
\Big| \int  &  (u_{n}^{\alpha} v_{n}^{\beta}   - \Phi^{\alpha}\Psi^{\beta}) w dx \Big|  \leq \int_{\Omega}|u_{n}^{\alpha} v_{n}^{\beta}   - \Phi^{\alpha}\Psi^{\beta}| |w |dx \\
& \leq \int_{\Omega} \left[|u_{n}^{\alpha}||v_{n}^{\alpha} - \Psi^{\alpha}||v_{n}^{\beta - \alpha} |+ |v_{n}^{\beta - \alpha} ||u_{n}^{\alpha} - \Phi^{\alpha}||\Psi^{\alpha}| + |\Phi^{\alpha}|| \Psi^{\alpha} ||v_{n}^{\beta -\alpha} - \Psi^{\beta - \alpha}| \right] |w| dx \\
& \leq C \left[  \|(u_{n},v_{n}) - (\Phi, \Psi)\|_{L^{\alpha}(\Omega) \times L^{\alpha}(\Omega)} + \|(u_{n},v_{n}) - (\Phi, \Psi)\|_{L^{\beta - \alpha}(\Omega) \times L^{\beta - \alpha}(\Omega)} \right]  \longrightarrow 0,
\end{align*}
which is the desired.

\vskip.2cm
\noindent \textbf{Step 2.} $\omega = I(\Phi, \Psi)$.

From Step 1, $(\Phi, \Psi)$ is a critical point of $I$. In particular, $(\Phi, \Psi) \in \mathcal{N}$ and
\begin{equation*}
\omega =\omega_{\mathcal{N}}= \inf_{\mathcal{N}}I(u,v) \leq I(\Phi, \Psi).
\end{equation*}

It remains to prove that $I(\Phi, \Psi) \leq \omega$. For this, we will show that  $(\widehat{u}_{n}, \widehat{v}_{n}):= (u_{n} -\Phi, v_{n} - \Psi)$ is a $(PS)_{\omega - I(\Phi, \Psi)}$-sequence. Proposition \ref{convegencederivative} then implies that  $\omega - I(\Phi, \Psi) \geq 0$. We need to prove that
\begin{align*}
& I(\widehat{u}_{n}, \widehat{v}_{n}) \longrightarrow \omega - I(\Phi, \Psi),\\
& I'(\widehat{u}_{n}, \widehat{v}_{n}) \longrightarrow 0 \ \ \ \mbox{in} \ \ \ H^{-1}(\mathbb{R}) \times H^{-1} (\mathbb{R}),
\end{align*}
or, equivalently,
\begin{align}
\label{equivconvergencepoitcritico1} & I(\widehat{u}_{n} + \Phi, \widehat{v}_{n} + \Psi) - I(\widehat{u}_{n}, \widehat{v}_{n}) \longrightarrow   I(\Phi, \Psi),\\
\label{equivconvergencepoitcritico2}& I'(\widehat{u}_{n} + \Phi, \widehat{v}_{n} + \Psi) - I'(\widehat{u}_{n}, \widehat{v}_{n}) \longrightarrow  I'(\Phi, \Psi) =0  \ \ \ \mbox{em} \ \ \ H^{-1}(\mathbb{R}) \times H^{-1}(\mathbb{R}).
\end{align}

Note that
\begin{equation}\label{intauxconvergence}
\begin{split}
 I(\widehat{u}_{n} + \Phi, \widehat{v}_{n} + \Psi) - I(\widehat{u}_{n}, \widehat{v}_{n})&  - I(\Phi, \Psi) =  \int[ \widehat{u}_{n} \Phi + \widehat{v}_{n} \Psi] dx + \int [\partial_{x}\widehat{u}_{n}\partial_{x}\Phi + \partial_{x}\widehat{v}_{n} \partial_{x}\Phi] dx \\
&  + \int H(\widehat{u}_{n}, \widehat{v}_{n}) dx + \int H(\Phi , \Psi) dx - \int H(\widehat{u}_{n} + \Phi , \widehat{v}_{n} + \Psi) dx . 
\end{split}
\end{equation}
Since $(\widehat{u}_{n}, \widehat{v}_{n}) \rightharpoonup (0,0)$ in $H^{1}(\mathbb{R}) \times H^{1}(\mathbb{R})$, the first two integrals on the right-hand side of \eqref{intauxconvergence} converge to zero.  In addition, after cancellation of the terms with opposite sign the remaining terms on the right-hand side of \eqref{intauxconvergence} are of the form
\begin{align}\label{integralconvergente}
\int \widehat{u}_{n}^{\alpha} \widehat{v}_{n}^{\beta}\Phi^{\tilde{\alpha}} \Psi^{\tilde{\beta}} dx,
\end{align}
where $\alpha$, $\widetilde{\alpha}$, $\beta$ and $\widetilde{\beta}$ are nonnegative  numbers with $\alpha + \tilde{ \alpha }  + \beta + \tilde{ \beta } = 2k+2 $. 

We claim that all integrals in \eqref{integralconvergente} converge to zero. To give a flavor of the proofs we consider only the case where
 $\alpha + \widetilde{\alpha} = 2k+2$ and $\alpha \cdot \widetilde{\alpha} \neq 0$ (the other terms converge to zero similarly).
 Assume first $\alpha =1$. Since $k\geq2$  then $\widetilde{\alpha} = 2k+1 \geq 5$. From Cauchy-Schwarz's inequality and Sobolev's embedding, we obtain
\begin{align*}
0 \leq \left| \int \widehat{u}_{n} \Phi^{2k+1} dx \right| \leq \|\Phi\|_{L^{\infty}}^{2k -1} \int |\widehat{u}^{n} \Phi|| \Phi |dx   \leq  C \|\widehat{u}_{n} \Phi\|_{L^{2}} \|\Phi\|_{L^{2}}.
\end{align*}
To conclude the claim in this case it suffices to show that
\begin{equation}\label{convergezero}
\|\widehat{u}_{n} \Phi\|_{L^{2}}^{2} \longrightarrow 0.
\end{equation}
In fact, for any $L >0$, 
\begin{align*}
 \nonumber 0 &\leq \|\widehat{u}_{n} \Phi\|_{L_{x}^{2}}^{2} 
 \leq \|\Phi\|_{L^{\infty}}^{2} \int_{-L}^{L} |\widehat{u}_{n}|^{2} dx + \left( \sup_{|x| \geq L} |\Phi(x)| \right)^{2} \int_{|x| \geq L} |\widehat{u}_{n}|^{2} dx =: A_n + B_n.
\end{align*}
From \eqref{convergencegs3} we have that $(\widehat{u}_{n})$ converges to zero in $L^{2}_{loc}$. So, $A_n$ converges to zero. Moreover, $B_n$ also converges to zero because $\lim_{L \rightarrow \infty} \sup_{|x| \geq L} |\Phi(x)| =0$, for any function $\Phi\in H^1(\R)$. 

Assume now $\alpha\geq2$.  Since  $\widetilde{\alpha} \geq 1$, Sobolev's embedding and H\"older's inequality imply
\begin{align*}
0 & \leq \left| \int \widehat{u}_{n}^{\alpha} \Phi^{\tilde{\alpha}}  dx  \right| \leq \|\widehat{u}_{n}\|_{L^{\infty}}^{\alpha - 2} \|\Phi\|_{L^{\infty}}^{ \tilde{\alpha} -1} \int |\widehat{u}_{n} \Phi| |\widehat{u}_{n}|dx 
 \leq C_{1} \|\widehat{u}_{n} \Phi\|_{L^{2}}^{2} 
\end{align*}
which reduces matter to \eqref{convergezero}. This establishes our claim  and \eqref{equivconvergencepoitcritico1} is proved.

To obtain \eqref{equivconvergencepoitcritico2} observe that for any $(w,z) \in C_{0}^{\infty}(\mathbb{R}) \times C_{0}^{\infty}(\mathbb{R})$ identity  \eqref{derivativeI} gives
\begin{align*}
&[I'(\widehat{u}_{n}, \widehat{v}_{n}) - I'(\widehat{u}_{n} - \Phi , \widehat{v}_{n} - \Psi) + I' (\Phi, \Psi)](w,z)\\
& = - \int [f(\widehat{u}_{n}, \widehat{v}_{n})w + g(\widehat{u}_{n}, \widehat{v}_{n})z]dx + \int [f(\widehat{u}_{n} + \Psi, \widehat{v}_{n} + \Psi) w + g(\widehat{u}_{n} + \Phi , \widehat{v}_{n} + \Psi ) z]dx \\
&\ \ \  - \int [f( \Phi,  \Psi) w + g( \Phi, \Psi ) z ] dx.
\end{align*}
After using the definitions of $f$ and $g$ the integral on the right-hand side of the last identity are of the form
$$
\int \widehat{u}_{n}^{\alpha} \Phi^{\tilde{\alpha}} \widehat{v}_{n}^{\beta} \Psi^{\tilde{\beta}} \theta dx
$$
with $\theta = w, z$ and $\alpha$, $\widetilde{\alpha}$, $\beta$ and $\widetilde{\beta}$ as in  \eqref{integralconvergente}. Using similar arguments as above,  \eqref{equivconvergencepoitcritico2} follows. 
This ends the proof that $(\widehat{u}_{n}, \widehat{v}_{n})$ is a $(PS)_{\omega - I(\Phi, \Psi)}$-sequence of $I$ and completes the proof of the theorem.
\end{proof}

As an immediate consequence of the existence of ground states we have the following.

\begin{corollary}\label{teosharpconst}
	For any $(u,v)\in \mathcal{P}$ we have
\begin{align*}
2  P(u,v)  \leq 	K_{opt}  \|(u,v)\|^{k+2}\|\partial_{x}(u,v)\|^{k},
\end{align*}
with the sharp constant $K_{opt} > 0$  given by
	\begin{equation}\label{kopt}
	K_{opt} = \frac{2}{k+2} \left(\frac{k+2}{k} \right)^{\frac{k}{2}}\frac{1}{\|(\Phi, \Psi)\|^{2k}},
	\end{equation}
	where $(\Phi, \Psi)$ is any  {ground state} solution of \eqref{sistemaeliptico2}.
\end{corollary}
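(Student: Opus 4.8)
The plan is to read the sharp constant directly off the variational characterization already in place, since \eqref{konstantoptima} defines $K_{opt}^{-1}$ precisely as the infimum of $J$ over $\mathcal{P}$. The stated inequality $2P(u,v)\le K_{opt}\|(u,v)\|^{k+2}\|\partial_x(u,v)\|^k$ is then nothing more than a rearrangement of $J(u,v)\ge K_{opt}^{-1}$, which holds for every $(u,v)\in\mathcal{P}$ straight from the definition of the infimum. Thus the entire content of the corollary is the identity $K_{opt}^{-1}=J(\Phi,\Psi)$ for a ground state $(\Phi,\Psi)$, followed by the explicit evaluation of $J(\Phi,\Psi)$ through \eqref{functionalJapp}.

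First I would record that $J$ is invariant under the two scalings $(u,v)\mapsto(Lu,Lv)$ and $(u,v)\mapsto(u(\sigma\cdot),v(\sigma\cdot))$ for $L,\sigma>0$, both of which follow from a direct computation using that $H$ is homogeneous of degree $2k+2$, so that $P$ scales exactly like the numerator of $J$. Since any ground state is a nontrivial solution of \eqref{sistemaeliptico2}, Proposition \ref{Pohozaevid} gives $(\Phi,\Psi)\in\mathcal{P}$, whence the easy bound $K_{opt}^{-1}\le J(\Phi,\Psi)$. The remaining and main task is the reverse bound $J(u,v)\ge J(\Phi,\Psi)$ for every $(u,v)\in\mathcal{P}$.

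To establish the reverse bound I would fix $(u,v)\in\mathcal{P}$ and normalize it by means of the two invariances. Using the dilation invariance I choose $\sigma>0$ so that the rescaled pair satisfies $\|\partial_x(u,v)\|^2=\frac{k}{k+2}\|(u,v)\|^2$ (the ratio forced on genuine solutions by \eqref{Pohosaevid3}); then, by Proposition \ref{lemacriticalindice}, multiplying by a suitable $\bar\ell>0$ places the pair on the Nehari manifold $\mathcal{N}$ without disturbing this ratio, since multiplicative scaling multiplies both $\|(u,v)\|^2$ and $\|\partial_x(u,v)\|^2$ by the same factor. On $\mathcal{N}$ one has $\|(u,v)\|_{H^1\times H^1}^2=(2k+2)P(u,v)$, and combining this with the imposed ratio one computes, exactly as in the proof of Proposition \ref{propIandsolution}, that $J(\bar\ell u,\bar\ell v)=\frac{k+2}{2}\bigl(\frac{k}{k+2}\bigr)^{k/2}\|(\bar\ell u,\bar\ell v)\|^{2k}$ and $I(\bar\ell u,\bar\ell v)=\frac{k}{k+2}\|(\bar\ell u,\bar\ell v)\|^2$. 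Since $(\bar\ell u,\bar\ell v)\in\mathcal{N}$, Lemma \ref{idindices} together with \eqref{Iandsolution} gives $\frac{k}{k+2}\|(\bar\ell u,\bar\ell v)\|^2=I(\bar\ell u,\bar\ell v)\ge\omega_{\mathcal{N}}=\omega=\frac{k}{k+2}\|(\Phi,\Psi)\|^2$, so $\|(\bar\ell u,\bar\ell v)\|^2\ge\|(\Phi,\Psi)\|^2$. Feeding this into the formula for $J$ and invoking $J(u,v)=J(\bar\ell u,\bar\ell v)$ yields $J(u,v)\ge J(\Phi,\Psi)$, as required.

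It then remains to evaluate $J(\Phi,\Psi)$ via \eqref{functionalJapp} and invert: from $K_{opt}^{-1}=J(\Phi,\Psi)=\frac{k+2}{2}\bigl(\frac{k}{k+2}\bigr)^{k/2}\|(\Phi,\Psi)\|^{2k}$ one obtains precisely \eqref{kopt}, and the value is independent of the chosen ground state because all ground states share the common action level $\omega$, hence the common norm $\|(\Phi,\Psi)\|^2=\frac{k+2}{k}\omega$. I expect the main obstacle to be conceptual rather than computational: the infimum defining $K_{opt}$ is governed by the two \emph{separate} quantities $\|(u,v)\|$ and $\|\partial_x(u,v)\|$, whereas the mountain-pass and Nehari machinery naturally controls only the action $I$, equivalently the combined $H^1$-norm. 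Reconciling the two genuinely requires the dilation invariance of $J$ to pin the Pohozaev ratio; without it one would only recover a sharp inequality for the combined norm and could not isolate the precise constant appearing in \eqref{kopt}.
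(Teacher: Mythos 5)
Your proof is correct and follows the same route as the paper, whose entire proof of the corollary is the citation of \eqref{konstantoptima} and Proposition \ref{propIandsolution}. In fact you supply more detail than the paper does: the two-parameter scaling normalization to the Pohozaev ratio $\|\partial_x(u,v)\|^2=\tfrac{k}{k+2}\|(u,v)\|^2$ followed by the Nehari-level comparison via Lemma \ref{idindices} is exactly the argument needed to justify that $\inf_{\mathcal{P}}J=J(\Phi,\Psi)$, a step the paper leaves implicit.
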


\begin{proof}
This follows from \eqref{konstantoptima} and Proposition \ref{propIandsolution}.
\end{proof}

\section{Global well-posedness: proof of Theorem \ref{teogwpsystem}}\label{globalsec}

This section is devoted to prove Theorem \ref{teogwpsystem}. The main tool here is the sharp Gagliardo-Nirenberg inequality obtained in Corollary \ref{teosharpconst}.

Let $(u(t),v(t))$ be the solution of \eqref{PVIsystem} with initial data $(u_0,v_0)$. As in \eqref{idderivativeaux} we use the conservation laws \eqref{laws3} and \eqref{laws4} and Corollary \ref{teosharpconst} to write
\begin{equation}\label{ineqaux00}
\begin{split}
\|\partial_{x}(u(t), v(t))\|^{2}  & = 2E(u_{0}, v_{0}) +  2\mu P(u(t),v(t)) \\
&\leq 2E(u_{0}, v_{0}) +2 P(|u(t)|,|v(t)|) \\
&   \leq 2E(u_{0}, v_{0}) +  K_{opt} \|(u_{0},v_{0})\|^{k+2}\|\partial_{x}(u(t), v(t))\|^{k} .
\end{split}
\end{equation}
Now we split the proof into the  cases $k>2$ and $k=2$.\\

\noindent\textbf{Case $k > 2$.}\\
First, we note that under condition \eqref{hip2} we have $E(u_{0}, v_{0}) > 0$. In fact,  since  \eqref{ineqaux00} holds as long as the solution exists, by taking $t=0$ and using  \eqref{hip2}, we obtain
\begin{equation}\label{compt0}
\begin{split}
\|\partial_{x}(u_0, v_0)\|^{2} & \leq 2E(u_{0}, v_{0}) +  K_{opt} \|(u_{0},v_{0})\|^{k+2}\|\partial_{x}(u_0, v_0)\|^{k} \\
& < 2E(u_{0}, v_{0}) +  K_{opt} \|(\Phi , \Psi)\|^{k+2}\|\partial_{x}(\Phi, \Psi)\|^{k-2}\|\partial_{x}(u_0, v_0)\|^{2}.
\end{split}
\end{equation}
On the other hand, combining \eqref{Pohosaevid3} with \eqref{kopt} it follows that
\begin{align*}
K_{opt} \|(\Phi , \Psi)\|^{k+2}\|\partial_{x}(\Phi, \Psi)\|^{k-2} = \frac{2}{k}.
\end{align*} 
Since $k >2$, \eqref{compt0} then yields
$$
E(u_{0}, v_{0}) > \frac{k-2}{2k} \|\partial_{x}(u_0, v_0)\|^{2} >0.
$$

The idea now is to apply Lemma \ref{lemacontinuidade}. For this, we set
\begin{align*}
A =  2 E(u_{0}, v_{0})  > 0,\quad 
B = K_{opt}\|(u_{0},v_{0})\|^{k+2},\quad \mbox{and}\quad
G(t) = \|\partial_{x}(u(t), v(t))\|^{2}.
\end{align*}
Thus we can write \eqref{ineqaux00} as
\begin{align*}
A-G(t)+ B G^{\frac{k}{2}}(t)\geq0, \quad \mbox{for} \ t \in [0,T],
\end{align*}
with $T$ given by Theorem \ref{lwpsystem}. Thus, by defining $f(r)=A-r+Br^{m}$, $m=\frac{k}{2}$, we promptly see that $f(G(t))\geq0$, $t\in[0,T]$. Moreover, using \eqref{kopt}, in the notation of Lemma \ref{lemacontinuidade},
\[
\begin{split}
\gamma&=\left(\frac{k}{2}B\right)^{-\frac{2}{k-2}}\\
&=\left[\left(\frac{k+2}{k}\right)^{\frac{k}{2}-1}\frac{\|(u_0,v_0)\|^{k+2}}{\|(\Phi,\Psi)\|^{2k}} \right]^{-\frac{2}{k-2}}\\
&=\frac{k}{k+2}\frac{\|(\Phi,\Psi)\|^{\frac{4k}{k-2}}}{\|(u_0,v_0)\|^{\frac{2(k+2)}{k-2}}}.
\end{split}
\]
Hence,
\begin{align*}
G(0)<\gamma & \Leftrightarrow \|\partial_{x}(u_0,v_0)\|^2\|(u_0,v_0)\|^{\frac{2(k+2)}{k-2}}<\frac{k}{k+2}\|(\Phi,\Psi)\|^{\frac{4k}{k-2}}\\
& \Leftrightarrow \|\partial_{x}(u_0,v_0)\|^{k-2}\|(u_0,v_0)\|^{k-2}<\left(\frac{k}{k+2}\right)^\frac{k-2}{2}\|(\Phi,\Psi)\|^{2k}\\
& \Leftrightarrow \|\partial_{x}(u_0,v_0)\|^{k-2}\|(u_0,v_0)\|^{k-2}<\|\partial_{x}(\Phi,\Psi)\|^{k-2}\|(\Phi,\Psi)\|^{k+2},
\end{align*}
where in the last inequality we used \eqref{Pohosaevid3}. Thus, we see that $G(0)<\gamma$ is equivalent to \eqref{hip2}. Also, from \eqref{Pohosaevid3} and \eqref{Pohosaevid4} it is easily checked that
$$
E(\Phi,\Psi)=\frac{k-2}{2(k+2)}\|(\Phi,\Psi)\|^2.
$$
Therefore,
\begin{align*}
A<\left(1-\frac{1}{m}\right)\gamma &\Leftrightarrow E(u_0,v_0)\|(u_0,v_0)\|^{\frac{2(k+2)}{k-2}}<\frac{k-2}{2(k+2)}\|(\Phi,\Psi)\|^{\frac{4k}{k-2}}\\
& \Leftrightarrow E(u_0,v_0)\|(u_0,v_0)\|^{\frac{2(k+2)}{k-2}} < E(\Phi,\Psi)\|(\Phi,\Psi)\|^{\frac{2(k+2)}{k-2}}\\
& \Leftrightarrow E(u_0,v_0)^{k-2}M(u_0,v_0)^{k+2}<E(\Phi,\Psi)^{k-2}M(\Phi,\Psi)^{k+2},
\end{align*}
which means that $A<\left(1-\frac{1}{m}\right)\gamma$ is equivalent to \eqref{hip1}. As an application of Lemma \ref{lemacontinuidade} we deduce that $G(t)<\gamma$ which in turn is equivalent to \eqref{res2}. This completes the proof in the case $k>2$.\\

\noindent \textbf{Case $k =2$.} In this case, from \eqref{ineqaux00},
\begin{align*}
\|\partial_{x}(u(t), v(t))\|^{2}  \leq  2E(u_{0}, v_{0}) +  K_{opt} \|(u_{0},v_{0})\|^{4}\|\partial_{x}(u(t), v(t))\|^{2} .
\end{align*}
Thus, it suffices to require
\begin{align*}
K_{opt} \|(u_{0},v_{0})\|^{4} < 1.
\end{align*}
But from \eqref{kopt} with $k=2$,
\begin{align*}
K_{opt} \|(u_{0},v_{0})\|^{4} < 1\Leftrightarrow \frac{2}{4} \left(\frac{4}{2} \right)^{\frac{2}{2}}\frac{1}{\|(\Phi, \Psi)\|^{4}}\|(u_{0},v_{0})\|^{4} < 1 \Leftrightarrow \|(u_{0},v_{0})\| < \|(\Phi, \Psi)\|,
\end{align*}
which is the desired.

In both cases, we obtain a uniform bound for $\|\partial_{x}(u(t), v(t))\|$ and the proof of the theorem is completed.

\section{Characterization of the ground states: proof of Theorem \ref{char}} \label{secchar}

In this section we will give another characterization of the ground states. 
Introduce the functionals 
\begin{equation}\label{Sdef}
S(u,v):=\int (u_x^2+v_x^2+u^2+v^2)dx=\|(u,v)\|_{H^1\times H^1}^2
\end{equation}
and
\begin{equation*}
\widetilde{P}(u,v)=(2k+2)P(u,v)=(2k+2)\int H(u,v)dx.
\end{equation*}

For $\lambda>0$, consider the following minimization problem
\begin{equation}\label{Slambda}
S_\lambda=\inf\{S(u,v):\; (u,v)\in H^1(\R)\times H^1(\R) \;\mbox{with}\; \widetilde{P}(u,v)=\lambda\}.
\end{equation}
We will show that for a specific value of $\lambda$ the ground states of \eqref{sistemaeliptico2} are also solutions of \eqref{Slambda}. We start by noting that from \eqref{inequalityGNtypeint} we have $\Pt(u,v)\leq CS(u,v)^{k+1}$, which implies that $S_\lambda$ must be positive for any $\lambda>0$. In addition, the homogeneity of $S$ and $\Pt$ gives that
\begin{equation}\label{relS1}
S_\lambda=\lambda^{\frac{1}{k+1}}S_1.
\end{equation}

In what follows we set
$$
\lambda_1:=(S_1)^{\frac{k+1}{k}}.
$$

\begin{lemma}\label{equilam}
	Let $\omega_{\mathcal{N}}$ be the Nehari level introduced in \eqref{neharilevel}. Then,
	$$
	\lambda_1=\frac{2k+2}{k}\omega_{\mathcal{N}}.
	$$
\end{lemma}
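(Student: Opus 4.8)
The plan is to reduce the identity to a computation of $\inf_{\mathcal{N}}S$ and then to exploit the homogeneity relation \eqref{relS1}. First I would evaluate the action $I$ on the Nehari manifold. Recalling from \eqref{operatorI} that $I(u,v)=\frac12 S(u,v)-P(u,v)$ and that, by \eqref{igual}, every $(u,v)\in\mathcal{N}$ satisfies $S(u,v)=(2k+2)P(u,v)=\widetilde{P}(u,v)$, one gets $P(u,v)=\frac{1}{2k+2}S(u,v)$ and hence
$$
I(u,v)=\left(\frac12-\frac{1}{2k+2}\right)S(u,v)=\frac{k}{2k+2}\,S(u,v),\qquad (u,v)\in\mathcal{N}.
$$
Taking the infimum over $\mathcal{N}$ and using \eqref{neharilevel}, this gives $\omega_{\mathcal{N}}=\frac{k}{2k+2}\inf_{\mathcal{N}}S$, so the lemma becomes equivalent to the claim that $\inf_{\mathcal{N}}S=\lambda_1=(S_1)^{\frac{k+1}{k}}$.

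For the lower bound $\inf_{\mathcal{N}}S\geq\lambda_1$, I would take an arbitrary $(u,v)\in\mathcal{N}$ and set $\lambda=\widetilde{P}(u,v)$. Since $(u,v)$ is admissible for the problem \eqref{Slambda} defining $S_\lambda$ and since $S(u,v)=\lambda$ on $\mathcal{N}$, the scaling law \eqref{relS1} yields $\lambda=S(u,v)\geq S_\lambda=\lambda^{\frac{1}{k+1}}S_1$. Dividing by $\lambda^{\frac{1}{k+1}}>0$ gives $\lambda^{\frac{k}{k+1}}\geq S_1$, that is $S(u,v)=\lambda\geq (S_1)^{\frac{k+1}{k}}=\lambda_1$, and the lower bound follows by passing to the infimum over $\mathcal{N}$.

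For the reverse inequality I would produce points of $\mathcal{N}$ whose $S$-value approaches $\lambda_1$. Choose a minimizing sequence $(u_n,v_n)$ for $S_1$, so that $\widetilde{P}(u_n,v_n)=1$ and $S(u_n,v_n)\to S_1$; in particular each $(u_n,v_n)\in\mathcal{P}$. By Proposition \ref{lemacriticalindice} (formula \eqref{lmaximo}) there is a unique $\overline{\ell}_n>0$ with $(\overline{\ell}_n u_n,\overline{\ell}_n v_n)\in\mathcal{N}$, namely $\overline{\ell}_n=\left(S(u_n,v_n)/\widetilde{P}(u_n,v_n)\right)^{\frac{1}{2k}}=S(u_n,v_n)^{\frac{1}{2k}}$. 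Then
$$
S(\overline{\ell}_n u_n,\overline{\ell}_n v_n)=\overline{\ell}_n^{\,2}\,S(u_n,v_n)=S(u_n,v_n)^{\frac{k+1}{k}}\longrightarrow (S_1)^{\frac{k+1}{k}}=\lambda_1,
$$
so $\inf_{\mathcal{N}}S\leq\lambda_1$. Combining the two bounds gives $\inf_{\mathcal{N}}S=\lambda_1$, and therefore $\lambda_1=\frac{2k+2}{k}\omega_{\mathcal{N}}$.

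The argument involves no genuine analytic difficulty: it rests entirely on the degree-$2$ homogeneity of $S$, the degree-$(2k+2)$ homogeneity of $\widetilde{P}$, and the resulting scaling law \eqref{relS1}, together with the one-parameter fibering of $\mathcal{P}$ over $\mathcal{N}$ already established in Proposition \ref{lemacriticalindice}. The only point demanding care is the bookkeeping of the exponents in the two rescalings — the Nehari rescaling $\overline{\ell}_n$ and the normalization $\widetilde{P}=1$ used for $S_1$ — which must combine exactly to the exponent $\frac{k+1}{k}$ appearing in $\lambda_1$.
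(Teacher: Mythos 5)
Your proof is correct, and for one of the two inequalities it takes a genuinely different route from the paper. The upper bound $\inf_{\mathcal{N}}S\leq\lambda_1$ (equivalently $\omega_{\mathcal{N}}\leq\frac{k}{2k+2}\lambda_1$) is obtained exactly as in the paper: project admissible points of \eqref{Slambda} with $\Pt=1$ onto $\mathcal{N}$ via the fibering map of Proposition \ref{lemacriticalindice} and track the exponents. The difference is in the reverse inequality. The paper starts from a ground state $(u,v)$, uses that it attains $\omega_{\mathcal{N}}$, and rescales it to the constraint $\Pt=1$ to bound $S_1$ from above; this makes the lemma depend on the existence result of Theorem \ref{teodeexistenciags}. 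You instead take an \emph{arbitrary} $(u,v)\in\mathcal{N}$, note that \eqref{igual} forces $S(u,v)=\Pt(u,v)=\lambda$, and play this off against the scaling law \eqref{relS1} to get $\lambda\geq (S_1)^{\frac{k+1}{k}}$ directly. This buys you a proof of the identity $\omega_{\mathcal{N}}=\frac{k}{2k+2}\lambda_1$ that does not presuppose that the Nehari level is attained, which is a mild but real logical gain; the paper's version buys nothing extra here, since the existence theorem has already been proved at that point in the text. Your exponent bookkeeping ($\overline{\ell}_n^{\,2}S(u_n,v_n)=S(u_n,v_n)^{\frac{k+1}{k}}$, and $\lambda^{\frac{k}{k+1}}\geq S_1$ after dividing by $\lambda^{\frac{1}{k+1}}>0$, which is legitimate because $\mathcal{N}\subset\mathcal{P}$ gives $\lambda>0$) checks out.
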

\begin{proof}
Let $(u,v)$ be a ground state solution of \eqref{sistemaeliptico2}. In particular, we have $\omega_{\mathcal{N}}=I(u,v)$. Recall from \eqref{igual} we must have $S(u,v)=\Pt(u,v)$. Hence,
\begin{equation}\label{a1}
\begin{split}
\omega_{\mathcal{N}}& = I(u,v)=\frac{1}{2}S(u,v)-\frac{1}{2k+2}\Pt(u,v)=\frac{k}{2k+2}S(u,v).
\end{split}
\end{equation}
Define $(U,V)=\left(\frac{k}{\omega_{\mathcal{N}}(2k+2)}\right)^{\frac{1}{2k+2}}(u,v)$. From \eqref{a1}, we deduce
$$
\Pt(U,V)=\frac{k}{\omega_{\mathcal{N}}(2k+2)}\Pt(u,v)=\frac{k}{\omega_{\mathcal{N}}(2k+2)}S(u,v)=1
$$
and
$$
S_1\leq S(U,V)=\left(\frac{k}{\omega_{\mathcal{N}}(2k+2)}\right)^{\frac{1}{k+1}}S(u,v)=\left(\frac{k}{\omega_{\mathcal{N}}(2k+2)}\right)^{-\frac{k}{k+1}}.
$$
This last inequality yields $\lambda_1\leq \frac{2k+2}{k}\omega_{\mathcal{N}}$. To show the opposite inequality it suffices to prove that $ \left(\frac{\omega_{\mathcal{N}}(2k+2)}{k}\right)^{\frac{k}{k+1}}\leq S(z,w)$, for any $(z,w)$ satisfying $\Pt(z,w)=1$. To do so, from Proposition \ref{lemacriticalindice}, if we set $\overline{ \ell }^{2k}=S(z,w)$ (see \eqref{lmaximo}) then $(Z,W)=\overline{ \ell }(z,w)\in\mathcal{N}$. Consequently,
\[
\begin{split}
\omega_{\mathcal{N}}\leq I(Z,W)=\frac{1}{2}\overline{\ell }^2S(z,w)-\frac{1}{2k+2}\overline{ \ell }^{2k+2}=\frac{k}{2k+2}S(z,w)^\frac{k+1}{k},
\end{split}
\]
which immediately gives the desired.
\end{proof}

With the above lemma in hand we are able to give the following characterization of the ground states.

\begin{proposition}\label{propcar}
A pair $(u,v)\in H^1(\R)\times H^1(\R)$ is a ground state solution of \eqref{sistemaeliptico2} if and only if $S(u,v)=S_{\lambda_1}$ and $\Pt(u,v)=\lambda_1$.

In particular, the minimization problem \eqref{Slambda} with $\lambda=\lambda_1$ has at least one solution.
\end{proposition}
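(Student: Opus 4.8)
The plan is to prove the two implications by exploiting the homogeneity of $S$ and $\Pt$ together with Lemma \ref{equilam} and the scaling relation \eqref{relS1}. A preliminary observation that streamlines both directions is that $S_{\lambda_1}=\lambda_1$: indeed, by \eqref{relS1} and the definition $\lambda_1=(S_1)^{(k+1)/k}$ we get $S_{\lambda_1}=\lambda_1^{1/(k+1)}S_1=S_1^{1/k}S_1=S_1^{(k+1)/k}=\lambda_1$. With this in hand the statement amounts to matching the Nehari characterization of ground states with the constrained minimization \eqref{Slambda} at the distinguished level $\lambda_1$.

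For the forward direction, suppose $(u,v)$ is a ground state. Then $(u,v)\in\mathcal{N}$, so $S(u,v)=\Pt(u,v)$ by \eqref{igual}, and hence $I(u,v)=\tfrac12 S(u,v)-\tfrac{1}{2k+2}\Pt(u,v)=\tfrac{k}{2k+2}S(u,v)$. Since $I(u,v)=\omega_{\mathcal{N}}$, Lemma \ref{equilam} yields $S(u,v)=\tfrac{2k+2}{k}\omega_{\mathcal{N}}=\lambda_1$, and therefore also $\Pt(u,v)=\lambda_1$. Combined with the preliminary observation this reads $S(u,v)=\lambda_1=S_{\lambda_1}$ and $\Pt(u,v)=\lambda_1$, as required.

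For the reverse direction, suppose $S(u,v)=S_{\lambda_1}$ and $\Pt(u,v)=\lambda_1$, so that $(u,v)$ minimizes $S$ under the constraint $\Pt=\lambda_1$. Since $\Pt'(u,v)(u,v)=(2k+2)\Pt(u,v)=(2k+2)\lambda_1\neq0$ by Euler's identity for the degree-$(2k+2)$ homogeneous functional $\Pt$, the constraint is regular and the Lagrange multiplier theorem provides $\Lambda$ with $S'(u,v)=\Lambda\,\Pt'(u,v)$. The crucial point is to pin down $\Lambda$: testing this identity against $(u,v)$ and using $S'(u,v)(u,v)=2S(u,v)$ and $\Pt'(u,v)(u,v)=(2k+2)\Pt(u,v)$ together with $S(u,v)=\Pt(u,v)=\lambda_1$ forces $\Lambda=\tfrac{1}{k+1}$. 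With this exact value, and recalling $\Pt'(u,v)(w,z)=(2k+2)\int[f(u,v)w+g(u,v)z]dx$ with $f=H_u$, $g=H_v$, the Euler--Lagrange equation, after cancelling the common factor, becomes precisely the weak formulation \eqref{weaksol}, so $(u,v)$ is a nontrivial weak solution of \eqref{sistemaeliptico2}.

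It remains to identify the energy level. Being a nontrivial solution, $(u,v)$ is a critical point of $I$ and hence lies in $\mathcal{N}$, so $I(u,v)\geq\omega_{\mathcal{N}}$; on the other hand $S(u,v)=\Pt(u,v)=\lambda_1$ gives $I(u,v)=\tfrac{k}{2k+2}\lambda_1=\omega_{\mathcal{N}}$ by Lemma \ref{equilam}. Since every critical point of $I$ belongs to $\mathcal{N}$, the infimum in Definition \ref{groundstate} coincides with $\omega_{\mathcal{N}}$, and therefore $(u,v)$ is a ground state. Finally, the ``in particular'' assertion follows by combining the forward direction with Theorem \ref{teodeexistenciags}: a ground state exists, and by the forward implication it solves \eqref{Slambda} with $\lambda=\lambda_1$. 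The main obstacle is the Lagrange multiplier step --- both justifying regularity of the constraint and, above all, verifying that homogeneity forces $\Lambda=\tfrac{1}{k+1}$, so that the Euler--Lagrange equation is exactly \eqref{sistemaeliptico2} and not a rescaled version of it.
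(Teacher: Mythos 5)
Your proof is correct and follows the same overall strategy as the paper: the forward implication via the Nehari identity \eqref{igual} and Lemma \ref{equilam}, and the reverse implication via Lagrange multipliers, with the homogeneity of $S$ and $\Pt$ tested against $(u,v)$ itself to force the multiplier to equal $\tfrac{1}{k+1}$ (your explicit check that the constraint is regular is a small improvement over the paper, which omits it). The only place you diverge is the final step showing that the constrained minimizer is a ground state: the paper takes an arbitrary nontrivial solution $(z,w)$, rescales it onto the constraint set $\{\Pt=\lambda_1\}$, and uses the minimality of $S(u,v)$ to conclude $\Pt(z,w)\geq\lambda_1$ and hence $I(u,v)\leq I(z,w)$; you instead compute $I(u,v)=\tfrac{k}{2k+2}\lambda_1=\omega_{\mathcal{N}}$ directly from Lemma \ref{equilam} and observe that every nontrivial critical point of $I$ lies in $\mathcal{N}$, so no critical point can have smaller action. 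Both arguments are valid; yours is slightly shorter because it reuses Lemma \ref{equilam}, while the paper's is self-contained at that point (relying only on the Pohozaev identities and scaling) and does not need to route the comparison through the mountain-pass/Nehari level a second time.
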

\begin{proof}
Let $(u,v)$ be a ground state solution of \eqref{sistemaeliptico2}. From \eqref{a1}, Lemma \ref{equilam}, and \eqref{relS1},
\begin{equation}\label{s1l1}
S(u,v)=\frac{2k+2}{k}\omega_{\mathcal{N}}=\lambda_1=\lambda_1^{\frac{1}{k+1}}S_1=S_{\lambda_1}
\end{equation}
and
$$
\Pt(u,v)=S(u,v)=\lambda_1.
$$

Now, take  $(u,v)\in H^1(\R)\times H^1(\R)$ satisfying $S(u,v)=S_{\lambda_1}$ and $\Pt(u,v)=\lambda_1$. Let us first show that $(u,v)$ is indeed a solution of \eqref{sistemaeliptico2}. From Lagrange's multiplier theorem there is a constant $\theta$ such that
$$
\int (u'w'+uw)dx=(k+1)\theta\int f(u,v)w\,dx,
$$
$$
\int (v'z'+vz)dx=(k+1)\theta\int g(u,v)z\,dx,
$$
for any $(w,z)\in H^1(\R)\times H^1(\R)$. Thus, we must show that $(k+1)\theta=1$. By taking $w=u$, $z=v$, and adding the above identities we deduce
$$
S(u,v)=(k+1)\theta \int [f(u,v)u+g(u,v)v]dx=(k+1)\theta (2k+2)\int H(u,v)dx=(k+1)\theta \Pt(u,v),
$$
that is, $S_{\lambda_1}=(k+1)\theta \lambda_1$. Since $\lambda_1=S_{\lambda_1}$ (see \eqref{s1l1}) we obtain the desired.

It remains to show that $(u,v)$ minimizes the functional $I$. Let $(z,w)$ be any solution of \eqref{sistemaeliptico2} and set $\xi=\Pt(z,w)$. Since $S(z,w)=\Pt(z,w)$ (see \eqref{Pohosaevid1}) we obtain
\begin{equation}\label{a2}
I(z,w)=\frac{1}{2}S(z,w)-\frac{1}{2k+2}\Pt(z,w)=\frac{k}{2k+2}\xi.
\end{equation}
Next define $(Z,W)=\left(\frac{\lambda_1}{\xi}\right)^{\frac{1}{2k+2}}(z,w)$. We have $\Pt(Z,W)=\lambda_1$ and
$$
\lambda_1^{\frac{1}{k+1}}S_1=S_{\lambda_1}=S(u,v)\leq S(Z,W)=\left(\frac{\lambda_1}{\xi}\right)^{\frac{1}{k+1}}S(z,w)=\lambda_1^{\frac{1}{k+1}}\xi^{\frac{k}{k+1}},
$$
implying that $\xi\geq (S_1)^{\frac{k+1}{k}}=\lambda_1$. Therefore, from \eqref{a2},
$$
I(u,v)=\frac{1}{2}\Pt(u,v)-\frac{1}{2k+2}\Pt(u,v)=\frac{k}{2k+2}\lambda_1\leq \frac{k}{2k+2}\xi=I(z,w).
$$
This completes the proof of the proposition.
\end{proof}

Finally we are in a position to prove Theorem \ref{char}

\begin{proof}[Proof of Theorem \ref{char}]
$(\Rightarrow)$ Assume first that $(u,v)$ is a nonnegative ground state. Let $U(x)=\sqrt{u(x)^2+v(x)^2}$ and fix $(x_0,y_0)\in Y$. From Proposition \ref{propcar} and the homogeneity of $F$,
\begin{equation}\label{a3}
\begin{split}
\lambda_1&=\Pt(u,v)=\int F(u(x),v(x))dx=\int F\left(\frac{1}{U(x)}(u(x),v(x))\right)U(x)^{2k+2}dx\\
& \leq \int F(x_0,y_0)U(x)^{2k+2}dx=\Pt(U(x)x_0,U(x)y_0).
\end{split}
\end{equation}
In addition, since $| U'|^2\leq | u'|^2+| v'|^2$ and $x_0^2+y_0^2=1$, we deduce
\begin{equation}\label{a4}
S(U(x)x_0,U(x)y_0)=\int (| U'(x)|^2+|U(x)|^2)dx\leq S(u,v).
\end{equation}
From the homogeneity of $\Pt$ and \eqref{a3} there is $t\in(0,1]$ such that $\lambda_1=\Pt(tU(x)x_0,tU(x)y_0)$. However, from \eqref{a4},
$$
S(tU(x)x_0,tU(x)y_0)=t^2S(U(x)x_0,U(x)y_0)\leq t^2S(u,v).
$$
Since $(u,v)$ is a minimum of $S$ restricted to $\Pt=\lambda_1$ (see Proposition \ref{propcar}) we must have $t=1$, implying that
$$
\lambda_1=\Pt(U(x)x_0,U(x)y_0) \quad \mbox{and}\quad S(U(x)x_0,U(x)y_0)=S(u,v)=S_{\lambda_1}.
$$
Another application of Proposition \ref{propcar} yields that $(U(x)x_0,U(x)y_0)$ is a ground state of \eqref{sistemaeliptico2}. Thus,
\begin{equation}\label{a5}
\begin{cases}
(U''-U)x_0+f(x_0,y_0)U^{2k+1}=0,\\
(U''-U)y_0+g(x_0,y_0)U^{2k+1}=0.
\end{cases}
\end{equation}
By multiplying the first and second equations in \eqref{a5} by $x_0$ and $y_0$, respectively, and adding the obtained equations, we get
$$
U''-U+[f(x_0,y_0)x_0+g(x_0,y_0)y_0]U^{2k+1}=0.
$$
Recalling \eqref{functionh2} and the definition of $F$, we finally deduce that $U$ must be a solution of
\begin{equation}\label{a6}
U''-U+F_{max}U^{2k+1}=0.
\end{equation}
Moreover, since
$$
I(U(x)x_0,U(x)y_0)=\frac{1}{2}\int [U'(x)^2+U(x)^2]dx-\frac{F(x_0,y_0)}{2k+2}\int U(x)^{2k+2}dx
$$
and $(U(x)x_0,U(x)y_0)$ is a ground state, it follows that $U$ is a ground state of \eqref{a6}.
Consequently, $(F_{max})^{\frac{1}{2k}}U$ is a ground state of \eqref{scalarg}. Recall that a ground state solution of
$$
h''-h+ah^{2k+1}=0, \qquad (a>0),
$$
is a solution that minimizes the action
\begin{equation}\label{I1}
I_a(h):=\frac{1}{2}\int [h'^2+h^2]dx-\frac{a}{2k+2}\int h^{2k+2}dx.
\end{equation}

 From the uniqueness of the ground state of \eqref{scalarg} we deduce (up to a translation)
\begin{equation}\label{a7}
(F_{max})^{\frac{1}{2k}}U=Q.
\end{equation}
Next, since $u(x)\geq0$ and $v(x)\geq0$, we may write $(u(x),v(x))=U(x)(z(x),w(x))$ with $z(x)^2+w(x)^2=1$ and $z(x)\geq0, w(x)\geq0$. Thus,
\[
\begin{split}
\int F_{max}U(x)^{2k+2}dx&=\int F(x_0,y_0)U(x)^{2k+2}dx=\Pt(U(x)x_0,U(x)y_0)=\Pt(u,v)\\
&=\int F(z(x),w(x))U(x)^{2k+2}dx,
\end{split}
\]
from which follows 
$$
\int (F_{max}-F(z(x),w(x)))U(x)^{2k+2}dx=0,
$$
and, consequently, $F(z(x),w(x))=F_{max}$ almost everywhere. This implies that $(z(x),w(x))=(F_{max})^{\frac{1}{2k}}(\alpha,\beta)$, where the point $(F_{max})^{\frac{1}{2k}}(\alpha,\beta)$ belongs to $Y$. Therefore, from \eqref{a7},
$$
(u,v)=U(z,w)=(F_{max})^{\frac{1}{2k}}(U\alpha,U\beta)\equiv (\alpha Q,\beta Q),
$$
as desired.

\noindent $(\Leftarrow)$ Assume now $(F_{max})^{\frac{1}{2k}}(\alpha,\beta)=:(x_0,y_0)\in Y$ and let
$
(u,v)=(\alpha Q,\beta Q),
$
where $Q$ is the ground state of \eqref{scalarg}.
We first claim that $(u,v)$ is a solution of \eqref{sistemaeliptico2}. Indeed, if either $\alpha=0$ or $\beta=0$ (equivalently $x_0=0$ or $y_0=0$) this is trivial because $F$ assumes the maximum value $a$ at the points $(1,0)$ and $(0,1)$. So, we may assume $x_0\neq0$ and $y_0\neq0$. In this case, from Lagrange's multiplier theorem there exists a constant $\theta$ such that
\begin{equation}\label{a8}
\begin{cases}
(k+1)f(x_0,y_0)=\theta x_0,\\
(k+1)g(x_0,y_0)=\theta y_0.
\end{cases}
\end{equation}
By multiplying the first equation in \eqref{a8} by $x_0$, the second by $y_0$ and adding the obtained equations we deduce that
\begin{equation}\label{a9}
\theta=(k+1)F(x_0,y_0)=(k+1)F_{max}.
\end{equation}
Hence, \eqref{a8} and \eqref{a9} imply
$$
\frac{f(\alpha,\beta)}{\alpha}=(F_{max})^{-1}\frac{f(x_0,y_0)}{x_0}=(F_{max})^{-1}\frac{\theta}{k+1}=1
$$
and
$$
\frac{g(\alpha,\beta)}{\beta}=(F_{max})^{-1}\frac{g(x_0,y_0)}{y_0}=(F_{max})^{-1}\frac{\theta}{k+1}=1.
$$
Consequently,
\[
\begin{cases}
(\alpha Q)''-\alpha Q +f(\alpha Q, \beta Q)=\alpha\left(Q''-Q+\frac{f(\alpha,\beta)}{\alpha}Q^{2k+1}\right)=0,\\
(\beta Q)''-\beta Q +g(\alpha Q, \beta Q)=\beta\left(Q''-Q+\frac{g(\alpha,\beta)}{\beta}Q^{2k+1}\right)=0,
\end{cases}
\]
which gives that $(\alpha Q,\beta Q)$ is a solution of \eqref{sistemaeliptico2}.

It remains to show that $(\alpha Q,\beta Q)$ is in fact a ground state. To prove this, first note that
\begin{equation}\label{a10}
I(\alpha Q,\beta Q)=(F_{max})^{-\frac{1}{k}}I_1(Q),
\end{equation}
where $I_1$ is given in \eqref{I1}.
On the other hand, if $(z,w)$ is any ground state, by using we have already proved, we must have $(z,w)=(\widetilde{\alpha}Q,\widetilde{\beta}Q)$ for some $(\widetilde{\alpha},\widetilde{\beta})$ satisfying $(F_{max})^{\frac{1}{2k}}(\widetilde{\alpha},\widetilde{\beta})\in Y$. Hence,
\begin{equation}\label{a11}
I(z,w)=I(\widetilde{\alpha}Q,\widetilde{\beta}Q)=(F_{max})^{-\frac{1}{k}}I_1(Q).
\end{equation}
By comparing \eqref{a10} and \eqref{a11} we then see that  $(\alpha Q,\beta Q)$ is in indeed a ground state. 
\end{proof}

An immediate consequence of the characterization of the ground states in Theorem \ref{char} is the following.

\begin{corollary}\label{cordecay}
The ground states of \eqref{sistemaeliptico2} are radially symmetric with an exponential decay at infinity. 
\end{corollary}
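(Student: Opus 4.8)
The plan is to read the conclusion off directly from the characterization in Theorem \ref{char}, using the known qualitative properties of the scalar ground state $Q$. First I would recall that, as noted in the discussion following \eqref{sistemaeliptico2} and established in Section \ref{GNsec}, every ground state of \eqref{sistemaeliptico2} is nonnegative. Consequently Theorem \ref{char} applies to \emph{all} ground states, not merely to those assumed a priori nonnegative, and the statement reduces to analyzing the pairs of the form $(\alpha Q,\beta Q)$.

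Next I would invoke the properties of $Q$ recalled around \eqref{scalarg}: the ground state of $-Q''+Q-Q^{2k+1}=0$ is unique up to translations, strictly positive, radially symmetric, and exponentially decaying at infinity (see \cite[Chapter 8]{CAZENAVEBOOK}). By Theorem \ref{char}, an arbitrary ground state $(u,v)$ of \eqref{sistemaeliptico2} satisfies $(u,v)=(\alpha Q,\beta Q)$ for some $\alpha,\beta\geq0$. Since multiplication by a fixed nonnegative constant preserves both the symmetry of $Q$ about its center and the exponential decay bound, each component $\alpha Q$ and $\beta Q$ is radially symmetric and exponentially decaying when the corresponding constant is positive, and is identically zero (hence trivially symmetric and decaying) otherwise. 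Moreover both components are built from the \emph{same} translate of $Q$, so the pair is symmetric about a single common point. Therefore $(u,v)$ is radially symmetric with exponential decay, which is the assertion.

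There is essentially no genuine obstacle here, the corollary being a direct transcription of Theorem \ref{char}; the only point deserving a line of justification is the passage from the nonnegative ground states treated in Theorem \ref{char} to \emph{all} ground states, which is supplied by the a priori nonnegativity of ground states mentioned above. I would keep the write-up to a couple of sentences, emphasizing that the scalar properties of $Q$ transfer componentwise to $(\alpha Q,\beta Q)$ under scaling by constants in $[0,\infty)$.
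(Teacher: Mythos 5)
Your proposal is correct and coincides with the paper's own (implicit) argument: the corollary is stated as an immediate consequence of Theorem \ref{char}, reading symmetry and exponential decay of $(\alpha Q,\beta Q)$ off from the known properties of the scalar ground state $Q$ of \eqref{scalarg}. Your extra remark that Theorem \ref{char} covers \emph{all} ground states via the nonnegativity claimed in the discussion after \eqref{sistemaeliptico2}, and that both components share a single center of symmetry, is exactly the (unwritten) justification the paper relies on.
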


To illustrate an application of Theorem  \ref{char} we will consider two examples: one in the case $k=2$ and another one for $k>2$.

\begin{corollary}\label{cork=2}
Assume $k=2$ and suppose $a=c=d>0$ and $b>0$. Then, \eqref{sistemaeliptico2} has a unique ground state, which is given by
$$
(\alpha Q,\alpha Q), \qquad \alpha=\frac{1}{(4a+b)^{\frac{1}{4}}},
$$
where $Q$ is the ground state solution of 
$$
Q''-Q+Q^5=0.
$$
\end{corollary}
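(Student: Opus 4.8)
The plan is to apply Theorem \ref{char} directly, so the heart of the matter is to compute the function $F$ and identify both the value $F_{max}$ and the set $Y$ explicitly for the prescribed coefficients. First I would evaluate $F(x,y)=(2k+2)H(x,y)=6H(x,y)$ at $k=2$. Using the definition \eqref{functionHint} with $k=2$ (so that $2k+2=6$, $k+1=3$, $k=2$, $k+2=4$) one obtains
\[
F(x,y)=a(x^6+y^6)+2b(xy)^3+3c\,x^4y^2+3d\,x^2y^4.
\]
Imposing the hypothesis $a=c=d$ and recognizing the binomial expansion $(x^2+y^2)^3=x^6+3x^4y^2+3x^2y^4+y^6$, this collapses to the compact form
\[
F(x,y)=a(x^2+y^2)^3+2b(xy)^3.
\]

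Next I would restrict $F$ to the arc $x^2+y^2=1$ with $x\geq0$, $y\geq0$, where it reduces to $F(x,y)=a+2b(xy)^3$. Since $a,b>0$, maximizing $F$ on this arc is equivalent to maximizing $xy$ subject to $x^2+y^2=1$ with $x,y\geq0$; by the arithmetic--geometric mean inequality (or the parametrization $x=\cos\theta$, $y=\sin\theta$) the maximum of $xy$ equals $1/2$ and is attained \emph{only} at $x=y=1/\sqrt2$. Hence
\[
F_{max}=a+2b\cdot\frac{1}{8}=\frac{4a+b}{4},
\]
and the maximizing set is the single point $Y=\{(1/\sqrt2,1/\sqrt2)\}$.

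Finally I would invoke Theorem \ref{char}. Because $Y$ consists of exactly one point, the theorem immediately yields uniqueness of the nonnegative ground state. Moreover, Theorem \ref{char} asserts that the ground state has the form $(\alpha Q,\beta Q)$ with $(F_{max})^{1/(2k)}(\alpha,\beta)\in Y$; for $k=2$ this reads $(F_{max})^{1/4}(\alpha,\beta)=(1/\sqrt2,1/\sqrt2)$, which forces $\alpha=\beta$. Solving and using $(F_{max})^{1/4}=(4a+b)^{1/4}/\sqrt2$ gives $\alpha=1/(4a+b)^{1/4}$, the asserted value, while the scalar profile $Q$ solves $Q''-Q+Q^5=0$ because $2k+1=5$. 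I do not anticipate any genuine obstacle here: the argument is entirely computational, and the only points demanding care are verifying the factorization of $F$ into $a(x^2+y^2)^3+2b(xy)^3$ and confirming that the maximizer of $xy$ on the quarter circle is unique, so that Theorem \ref{char} indeed delivers uniqueness of the ground state.
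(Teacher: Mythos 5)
Your proposal is correct, and it follows the same overall strategy as the paper: reduce the corollary to Theorem \ref{char} by computing $F_{max}$ and the maximizing set $Y$ on the quarter circle, then read off $\alpha$ from the condition $(F_{max})^{1/4}(\alpha,\beta)\in Y$. The only place where you genuinely diverge is the optimization step. The paper finds the critical points of $F$ restricted to $\{x^2+y^2=1,\ x,y\ge0\}$ via Lagrange multipliers, reduces the resulting system to $bxy(y^2-x^2)=0$, and then compares the values $F(1,0)=F(0,1)=a$ with $F(1/\sqrt2,1/\sqrt2)=a+b/4$. You instead exploit the hypothesis $a=c=d$ to factor $F(x,y)=a(x^2+y^2)^3+2b(xy)^3$, so that on the constraint $F=a+2b(xy)^3$ and the problem collapses to maximizing $xy$, handled by AM--GM. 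Your route is more elementary and has the advantage of making the \emph{uniqueness} of the maximizer transparent (the paper's argument only exhibits the critical points and compares values, leaving the reader to note that the interior critical point is unique); the paper's Lagrange-multiplier computation has the advantage of being the template that generalizes to the case $k>2$ treated in the next corollary, where no such clean factorization is available. The arithmetic at the end ($\alpha=\tfrac{1}{\sqrt2}(F_{max})^{-1/4}=(4a+b)^{-1/4}$ and $2k+1=5$) checks out.
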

\begin{proof}
According to Theorem \ref{char} we need to find the maximum points of
$$
F(x,y)=a\Big(x^6+y^6+3(x^4y^2+x^2y^4)\Big)+2bx^3y^3,
$$
restricted to the set $Z=\{(x,y)\in\R^2: \, x^2+y^2=1, x\geq0, y\geq0\}$. To obtain the critical points, from Lagrange's multiplier theorem, we must find all point $(x,y,\theta)$ satisfying
\begin{equation}\label{a12}
\begin{cases}
3(ax^5+bx^2y^3+2ax^3y^2+axy^4)=\theta x,\\
3(ay^5+bx^3y^2+ax^4y+2ax^2y^3)=\theta y,\\
x^2+y^2=1, \; x\geq0,\; y\geq0.
\end{cases}
\end{equation}
Note that $(1,0,3a)$ and $(0,1,3a)$ are always solution of \eqref{a12}. Moreover, $F(0,1)=F(1,0)=a$. To find other possible critical points we then may assume $x\neq0$, $y\neq0$.
Now, dividing the first equation in \eqref{a12} by $x$, the second one by $y$ we see that
$$
ax^4+bxy^3+2ax^2y^2+ay^4=ay^4+bx^3y+ax^4+2ax^2y^2,
$$
or equivalently,
$$
bxy(y^2-x^2)=0.
$$
This implies that $x=y=\frac{1}{\sqrt2}$. As a consequence, $\left(\frac{1}{\sqrt2},\frac{1}{\sqrt2},\frac{3}{4}(4a+b)\right)$ is also a solution of \eqref{a12} with $F\left(\frac{1}{\sqrt2},\frac{1}{\sqrt2}\right)=a+\frac{b}{4}$.  Since $F\left(\frac{1}{\sqrt2},\frac{1}{\sqrt2}\right)>a=F(1,0)$ we are done.
\end{proof}

\begin{corollary}\label{cork>2}
Assume $k>2$, $a>0$, $b>0$, and $c=d=\gamma a$, with $\gamma\geq0$. In the case $\gamma>0$ assume also $k$ is even and $\gamma<2^{k-1}-k/2$. Let $Q$ be the ground state solution of \eqref{scalarg}. Then we have the following.
\begin{itemize}
	\item[(i)] If $a\left(2^k-1-\frac{2\gamma(k+1)}{k}\right)>b$ then the ground states of \eqref{sistemaeliptico2} are of the form
	$$
	(\alpha Q,0) \quad \mbox{and} \quad (0,\alpha Q), \qquad \alpha=a^{-\frac{1}{2k}}.
	$$
	\item[(ii)]  If $a\left(2^k-1-\frac{2\gamma(k+1)}{k}\right)=b$ then the ground states of \eqref{sistemaeliptico2} are of the form
	$$
	(\alpha Q,0),  \quad (0,\alpha Q),  \qquad \alpha=a^{-\frac{1}{2k}}, \quad 
	$$
	or
	$$
	(\alpha Q, \alpha Q), \qquad \alpha=\frac{a^{-\frac{1}{2k}}}{\sqrt2}.
	$$
	\item[(iii)] If $a\left(2^k-1-\frac{2\gamma(k+1)}{k}\right)<b$ then the ground states of \eqref{sistemaeliptico2} are of the form
	$$
	(\alpha Q,\alpha Q)  \qquad \alpha=\left(a+b+\frac{2a\gamma(k+1)}{k}\right)^{-\frac{1}{2k}}.
	$$
	In particular the ground state is unique in this case.
\end{itemize}
\end{corollary}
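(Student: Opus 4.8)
The plan is to invoke Theorem \ref{char}, which reduces the whole question to determining the set $Y$ of maximizers of $F(x,y)=(2k+2)H(x,y)$ on the quarter-circle $Z=\{(x,y):x^2+y^2=1,\ x\ge0,\ y\ge0\}$, together with the value $F_{max}$. Once $Y$ is known, each $(x_0,y_0)\in Y$ produces the nonnegative ground state $(\alpha Q,\beta Q)$ with $(\alpha,\beta)=(F_{max})^{-1/(2k)}(x_0,y_0)$, so the lists in (i)--(iii) will follow by plugging in the maximizers found below, and uniqueness in (iii) is immediate as soon as $Y$ is a single point.

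First I would write $F$ explicitly on $Z$. Using $c=d=\gamma a$ and $x^2+y^2=1$ the mixed monomials collapse, since $x^{k+2}y^k+x^ky^{k+2}=(xy)^k$, giving $F(x,y)=a(x^{2k+2}+y^{2k+2})+2b(xy)^{k+1}+\tfrac{2a\gamma(k+1)}{k}(xy)^k$ on $Z$. In particular the axis points give $F(1,0)=F(0,1)=a$, while the diagonal gives $F(\tfrac{1}{\sqrt2},\tfrac{1}{\sqrt2})=2^{-k}\big(a+b+\tfrac{2a\gamma(k+1)}{k}\big)$. A one-line comparison then shows that $F(\tfrac{1}{\sqrt2},\tfrac{1}{\sqrt2})$ is smaller than, equal to, or larger than $a$ precisely according to whether $b$ is smaller than, equal to, or larger than $a\big(2^k-1-\tfrac{2\gamma(k+1)}{k}\big)$, which is exactly the trichotomy of (i)--(iii). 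Thus, once the global maximum is known to be attained only at these points, the three cases and the stated values of $\alpha$ will drop out after feeding $Y$ and $F_{max}$ back into Theorem \ref{char}.

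The analytic heart is therefore to prove that the \emph{global} maximum of $F$ over $Z$ is attained only at the axis points and/or at the diagonal. Following the Lagrange-multiplier computation already carried out in Corollary \ref{cork=2}, I would impose $F_x/x=F_y/y$ at an interior point $x,y>0$; after simplification on the constraint the resulting equation carries a factor $(x^2-y^2)$, so that either $x=y$ (the diagonal) or a second factor vanishes. Writing $u=x/y$ and $m=u+u^{-1}\ge2$, this second factor is proportional to $\widetilde{\psi}(m):=a\,\Sigma(m)-a\gamma m-b$, where $\Sigma(m)=\sum_{j=0}^{k-1}u^{2j-k+1}$ is symmetric under $u\leftrightarrow u^{-1}$, hence a function of $m$, with $\Sigma(2)=k$. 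A short expansion near each axis ($y\to0$) shows $F|_Z\approx a-a(k+1)\,y^2+\cdots<a$ nearby, so the axis points are always strict local maxima, while the diagonal is a critical point by symmetry. Hence the global maximum lies among the axis points, the diagonal, and the interior critical points $\{\widetilde{\psi}(m)=0\}$, and it remains only to rule out that an interior critical point is a global maximizer.

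This final step is where I expect the main difficulty, and it is exactly where the hypotheses $k$ even and $\gamma<2^{k-1}-k/2$ are used: they are the quantitative conditions preventing an interior critical value of $F|_Z$ from exceeding both $a$ and $F(\tfrac{1}{\sqrt2},\tfrac{1}{\sqrt2})$. Concretely, one studies the sign of $\widetilde\psi$ on $[2,\infty)$ (equivalently the monotonicity pattern of $F$ along $Z$, parametrized by $m$); since $\Sigma$ is increasing and convex there with $\Sigma(2)=k$, the bound on $\gamma$ constrains the single-variable function $\widetilde\psi$ so that none of its zeros can furnish a value of $F$ strictly above the endpoint and diagonal values, and the parity of $k$ guarantees the relevant monomials behave correctly on the nonnegative quadrant. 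Granting this, the global maximum of $F$ on $Z$ is attained only at $\{(1,0),(0,1)\}$ and/or at $(\tfrac{1}{\sqrt2},\tfrac{1}{\sqrt2})$; the trichotomy of the second paragraph then identifies $Y$ and $F_{max}$ in each regime, and Theorem \ref{char} delivers exactly the ground states in (i), (ii) and (iii), uniqueness in (iii) holding because there $Y=\{(\tfrac{1}{\sqrt2},\tfrac{1}{\sqrt2})\}$.
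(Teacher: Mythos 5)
Your overall strategy coincides with the paper's: reduce via Theorem \ref{char} to maximizing $F$ on the quarter-circle, set up the Lagrange system, factor out $x^2-y^2$ to isolate the diagonal, compute $F(1,0)=F(0,1)=a$ and $F(\tfrac{1}{\sqrt2},\tfrac{1}{\sqrt2})=2^{-k}\bigl(a+b+\tfrac{2a\gamma(k+1)}{k}\bigr)$, and observe that the trichotomy in (i)--(iii) is exactly the comparison of these two values. Up to that point everything you write is correct.

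However, there is a genuine gap at precisely the step you label the ``analytic heart'' and then dispose of with ``Granting this.'' Knowing the sign or monotonicity pattern of $\widetilde\psi(m)=a\Sigma(m)-a\gamma m-b$ on $[2,\infty)$ locates the off-diagonal interior critical points but does not, by itself, bound the \emph{value} of $F$ at those points; your sketch never explains how convexity of $\Sigma$ converts into the inequality $F(x_0,y_0)<a$. The paper's actual mechanism is different and essential: at an interior critical point one has $F(x_0,y_0)=ax_0^{2k}+bx_0^{k-1}y_0^{k+1}+\tfrac{a\gamma(k+2)}{k}x_0^ky_0^k+a\gamma x_0^{k-2}y_0^{k+2}$ (from dividing the first Lagrange equation by $x$ and using $F(x_0,y_0)=\theta/(k+1)$), and the critical-point relation $ap(r_0)-br_0^{k-1}-a\gamma(r_0^2+1)r_0^{k-2}=0$ is used to \emph{eliminate} $b$, yielding
$$
F(x_0,y_0)=\frac{a\bigl(r_0^{2k}+p(r_0)\bigr)}{(1+r_0^2)^k}+\frac{2a\gamma r_0^{k}}{k(1+r_0^2)^k},
$$
with $p(r)=r^{2k-2}+\dots+r^2+1$. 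The claim $F(x_0,y_0)<a$ then reduces to the polynomial inequality $r_0^{2k}+p(r_0)+\tfrac{2\gamma}{k}r_0^k<(1+r_0^2)^k$, which for $\gamma=0$ holds for all $r_0>0$ and $k>2$ by comparing with the binomial expansion, and for $\gamma>0$ is where $k$ even and $\gamma<2^{k-1}-k/2$ enter: the $r_0^k$ term on the left must be dominated by the middle binomial term, which leads to the sufficient condition $1+\tfrac{2\gamma}{k}<\tfrac{2^k}{k}$. Without this elimination-and-comparison argument (or an equivalent one), your proof does not establish that off-diagonal interior critical points are never global maximizers, so cases (i)--(iii) remain unproven.
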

\begin{proof}
Following the ideas in Corollary \ref{cork=2} we need to find the critical points of
$$
F(x,y)=a(x^{2k+2}+y^{2k+2})+2bx^{k+1}y^{k+1} + \frac{2a \gamma(k+1)}{k} (x^{k+2}y^{k} + x^{k}y^{k+2} )
$$
restricted to the set $Z=\{(x,y)\in\R^2: \, x^2+y^2=1, x\geq0, y\geq0\}$. The Lagrange's multiplier theorem implies we must solve the system
\begin{equation}\label{a13}
\begin{cases}
(k+1)[a\, x^{2k+1} + b \, x^ky^{k+1}+   \frac{k+2}{k} a  \gamma \, x^{k+1}y^{k} + a  \gamma \, x^{k-1}y^{k+1} ]=\theta x,\\
(k+1)[a\, y^{2k+1} + b \, x^{k+1}y^{k} +  \frac{k+2}{k} a  \gamma \, x^{k}y^{k+1} + a  \gamma \, x^{k+2}y^{k-1} ]=\theta y,\\
x^2+y^2=1, \; x\geq0,\; y\geq0.
\end{cases}
\end{equation}
We first observe that  $(1,0,a(k+1))$ and $(0,1,a(k+1))$ are solutions of \eqref{a13} with $F(0,1)=F(1,0)=a$. To find the other solutions we may assume $x>0$ and $y>0$. As in \eqref{a9} we deduce that at  any critical point $(x_0,y_0)$ we must have
\begin{equation}\label{a14}
F(x_0,y_0)=\frac{\theta}{k+1}.
\end{equation}
On the other hand, dividing the first equation in \eqref{a13} by $x$ and comparing the result with \eqref{a14} we deduce that at any critical point $(x_0,y_0)$ of $F$ restricted to $Z$,
\begin{equation}\label{a15}
F(x_0,y_0)=ax_0^{2k}+bx_0^{k-1}y_0^{k+1} + \frac{a \gamma (k+2)}{k} x^{k}y^{k} + a \gamma x^{k-2}y^{k+2}.
\end{equation}

Next, by dividing the first equation in \eqref{a13} by $x$, the second one by $y$, we see that any critical point of $F$ we must satisfy
$$
ax^{2k}+bx^{k-1}y^{k+1} + a \gamma x^{k-2}y^{k+2} =ay^{2k}+bx^{k+1}y^{k-1} + a \gamma x^{k+2}y^{k-2},
$$
or, which is the same,
$$
a(x^{2k}-y^{2k})+b(x^{k-1}y^{k+1}-x^{k+1}y^{k-1}) + a \gamma (x^{k-2}y^{k+2}-x^{k+2}y^{k-2})=0.
$$
Since $y\neq0$ we may introduce the variable $r=\frac{x}{y}$. Thus the last identity reads as
$$
a(r^{2k}-1)-br^{k-1}(r^2-1) - a \gamma r^{k-2} (r^{4}-1)=0.
$$
Observing that $r^{2k}-1=(r^2-1)p(r)$, where $p(r)=r^{2k-2}+r^{2k-4}+\ldots+r^2+1$, we see that our task reduces to finding all positive solutions of
\begin{equation}\label{a16}
a(r^2-1)\Big(p(r) - \gamma (r^{2} + 1)r^{k-2}\Big) -br^{k-1}(r^2-1)=0.
\end{equation}
It is clear that $r=1$ is a solution of \eqref{a16}. This means that $\left(\frac{1}{\sqrt{2}},\frac{1}{\sqrt2}\right)$ is a critical point with $F\left(\frac{1}{\sqrt{2}},\frac{1}{\sqrt2}\right)=\left(a + b +  \frac{2 a \gamma (k+1)}{k}\right)\frac{1}{2^k}$. The other solutions of \eqref{a16} (if they exist) must satisfy 
\begin{equation}\label{a17}
ap(r)-br^{k-1} - a \gamma (r^{2}+1) r^{k-2}=0, \qquad (r>0).
\end{equation}
Now, let $r_0$ be any solution of \eqref{a17}. This means that $$(x_0,y_0)=\left(\frac{r_0}{\sqrt{1+r_0^2}},\frac{1}{\sqrt{1+r_0^2}}\right)$$
is a critical point of $F$ restricted to $Z$. We claim that $F(x_0,y_0)<a=F(1,0)$. Indeed, using \eqref{a15} and \eqref{a17} we have
\[
\begin{split}
F(x_0,y_0)&=\frac{ar_0^{2k}}{(1+r_0^2)^k}+\frac{br_0^{k-1}}{(1+r_0^2)^k} + \left( \frac{k+2}{k}\right)\frac{a \gamma r_0^{k}}{(1+r_0^2)^k} + \frac{a \gamma r_0^{k-2}}{(1+r_0^2)^k}\\
& = \frac{a(r_0^{2k}+p(r_0) )}{(1+r_0^2)^k}\ + \left(\frac{2}{k} \right)\frac{a \gamma r_{0}^{k}}{(1+r_0^2)^k},
\end{split}
\]
where in the last equality we have used that $r_0$ is a solution of \eqref{a17} to write $br_0^{k-1}=ap(r_0)-a\gamma r_0^{k-2}(r_0^2+1)$.
If $\gamma =0$ we have $r_0^{2k}+p(r_0)<(1+r_0^2)^k$ for any $r_0>0$ and $k > 2$; so $F(x_0,y_0) < a$, as claimed. On the other hand, if $\gamma > 0$, it suffices that
\begin{equation}\label{a18}
r_0^{2k}+p(r_0) +\frac{2 \gamma r_{0}^{k}}{k} <(1+r_0^2)^k.
\end{equation}
Using the definition of $p$, expanding the right-hand side of \eqref{a18} and using that $k$ is even we see that it suffices
\begin{equation}\label{a19}
1+\frac{2\gamma}{k}<\frac{k}{((k/2)!)^2}.
\end{equation}
But observing that right-hand side of \eqref{a19} is grater than $\frac{2^k}{k}$ we deduce that it suffices to impose the condition $1+\frac{2\gamma}{k}<\frac{2^k}{k}$, which holds in view of our assumption on $\gamma$. The claim is thus proved.

  As a consequence, the maximum of $F$ restricted to $Z$ may occur only at the points $(0,1)$, $(1,0)$, or $\left(\frac{1}{\sqrt{2}},\frac{1}{\sqrt2}\right)$. By comparing the maximum value of $F$ at these points and using Theorem \ref{char} we complete the proof.
\end{proof}

\section{Instability of ground states for $k>2$}\label{secinst}

In this section we assume $k>2$ and restrict our attention to the system
\begin{equation}\label{partsys}
\begin{cases}
\partial_{t}u + \partial_{x}^{3}u+  \partial_{x}( H_{u}(u,v)) =0,\\
\partial_{t}v + \partial_{x}^{3}v +  \partial_{x} (H_{v}(u,v)) =0, 
\end{cases}
\end{equation}
with $H$ given according to Corollary \ref{cork>2}.
Recall that a solitary wave for \eqref{partsys} is a solution of the form $(\phi(x-\omega t),\psi(x-\omega t))$ with $\phi$ and $\psi$ having a suitable decay at infinity. By replacing this ansatz  in \eqref{partsys} and integrating once we obtain
\begin{equation}\label{b1}
\begin{cases}
\phi'' - \omega\phi + H_u(\phi, \psi )= 0,\\
\psi '' - \omega\psi +  H_v(\phi, \psi )= 0,
\end{cases}
\end{equation}
which reduces to system \eqref{sistemaeliptico2} if $\omega=1$.

Our first result concerns the existence of solutions for \eqref{b1} for any $\omega>0$.

\begin{lemma}\label{solwaves}
	Let $(\phi_1,\psi_1)$ be any ground state of \eqref{sistemaeliptico2} according to Corollary \ref{cork>2}. 
Then, system \eqref{b1} has a smooth curve of ground state solutions,
$$
\omega\in(0,+\infty)\mapsto (\phi_{\omega},\psi_{\omega})\in H^\infty(\R)\times H^\infty(\R),
$$
which agrees with $(\phi_1,\psi_1)$ at $\omega=1$.
\end{lemma}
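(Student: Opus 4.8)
The plan is to build the entire branch from $(\phi_1,\psi_1)$ by a single scaling, exploiting the fact that $H_u$ and $H_v$ are homogeneous of degree $2k+1$. I would define the linear dilation operator
$$
T_\omega(u,v):=\Big(\omega^{\frac{1}{2k}}u(\sqrt\omega\,\cdot),\ \omega^{\frac{1}{2k}}v(\sqrt\omega\,\cdot)\Big),\qquad \omega>0,
$$
and set $(\phi_\omega,\psi_\omega):=T_\omega(\phi_1,\psi_1)$. Writing $\phi_\omega(x)=\omega^{\frac1{2k}}\phi_1(\sqrt\omega\,x)$ and substituting into \eqref{b1}, one uses $H_u(\lambda a,\lambda b)=\lambda^{2k+1}H_u(a,b)$ together with the relation $\phi_1''=\phi_1-H_u(\phi_1,\psi_1)$ (and its analogue for $\psi_1$) to see that, after collecting terms, the coefficient of $\phi_1(\sqrt\omega x)$ vanishes precisely because the dilation factor is $\sqrt\omega$, while the coefficient of $H_u(\phi_1,\psi_1)(\sqrt\omega x)$ vanishes precisely because the amplitude factor is $\omega^{1/(2k)}$. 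Hence $(\phi_\omega,\psi_\omega)$ solves \eqref{b1}. Since $T_\omega$ is a linear isomorphism of $H^1(\R)\times H^1(\R)$ (a nonzero scalar multiple of a dilation), running the same computation in reverse shows it maps the solution set of \eqref{sistemaeliptico2} bijectively onto that of \eqref{b1}; that is, $T_\omega$ conjugates the two elliptic systems.

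Next I would lift this correspondence to the level of ground states. The natural action for \eqref{b1} is $I_\omega(u,v)=\tfrac12\|\partial_x(u,v)\|^2+\tfrac{\omega}{2}\|(u,v)\|^2-P(u,v)$, whose nontrivial critical points are exactly the solutions of \eqref{b1}. A short homogeneity count gives the scaling of each term under $T_\omega$, namely $\|\partial_x T_\omega(u,v)\|^2=\omega^{\frac{k+2}{2k}}\|\partial_x(u,v)\|^2$, $\|T_\omega(u,v)\|^2=\omega^{\frac{2-k}{2k}}\|(u,v)\|^2$, and $P(T_\omega(u,v))=\omega^{\frac{k+2}{2k}}P(u,v)$; combining these (the mass term picking up the extra factor $\omega$ from $I_\omega$) yields the clean identity
$$
I_\omega\big(T_\omega(u,v)\big)=\omega^{\frac{k+2}{2k}}\,I(u,v)
$$
for every $(u,v)\in H^1(\R)\times H^1(\R)$. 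Because $T_\omega$ is a bijection between the sets of nontrivial critical points of $I$ and of $I_\omega$ and merely rescales the action by the fixed positive factor $\omega^{(k+2)/(2k)}$, it carries minimizers to minimizers. In particular $(\phi_\omega,\psi_\omega)$ is a ground state of \eqref{b1}, and at $\omega=1$ it reduces to $(\phi_1,\psi_1)$ by construction.

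It remains to address regularity and the smooth dependence on $\omega$, which I expect to be the only genuinely technical point. By elliptic regularity together with Corollary \ref{cordecay}, the ground state $(\phi_1,\psi_1)$ is smooth and decays exponentially along with all its derivatives, hence lies in $H^\infty(\R)\times H^\infty(\R)$, a class preserved by $T_\omega$. For the smoothness of $\omega\mapsto(\phi_\omega,\psi_\omega)$ I would differentiate the explicit formula: $\partial_\omega$ applied to $\omega^{1/(2k)}\phi_1(\sqrt\omega\,x)$ produces terms of the form $\omega^{\rho}x^{\,j}\phi_1^{(j)}(\sqrt\omega\,x)$, and iterating generates the same structure with higher powers of $x$. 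The main obstacle is exactly that these $\omega$-derivatives insert polynomial weights $x^{\,j}$; this is resolved by the exponential decay of $(\phi_1,\psi_1)$ and of all its derivatives, which keeps every such term in $H^s(\R)$ for all $s$. A routine difference-quotient argument then shows that $\omega\mapsto(\phi_\omega,\psi_\omega)$ is a $C^\infty$ curve into $H^s(\R)\times H^s(\R)$ for each $s$, i.e.\ into $H^\infty(\R)\times H^\infty(\R)$, completing the construction.
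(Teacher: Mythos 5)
Your proposal is correct and follows essentially the same route as the paper: the paper's proof consists precisely of writing down the scaling $\phi_{\omega}(x)=\omega^{\frac{1}{2k}}\phi_1(\sqrt{\omega}x)$, $\psi_{\omega}(x)=\omega^{\frac{1}{2k}}\psi_1(\sqrt{\omega}x)$ and invoking the homogeneity of degree $2k+1$ of $H_u$ and $H_v$, exactly as you do. The additional details you supply (the action identity $I_\omega(T_\omega(u,v))=\omega^{\frac{k+2}{2k}}I(u,v)$ showing that $T_\omega$ carries ground states to ground states, and the smoothness in $\omega$ via exponential decay) are correct elaborations of what the paper declares ``immediate.''
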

\begin{proof}
This follows immediately by setting
$$
\phi_{\omega}(x)=\omega^{\frac{1}{2k}}\phi_1(\sqrt{\omega}x),\quad \psi_{\omega}(x)=\omega^{\frac{1}{2k}}\psi_1(\sqrt{\omega}x), \qquad \omega>0,
$$
and using that $H_u$ and $H_v$ are homogeneous functions of degree $2k+1$.
\end{proof}

Next let us recall the definition of orbital stability/instability.

\begin{definition}
We say that the solitary wave $(\phi_{\omega},\psi_{\omega})$ is stable in $H^1(\R)\times H^1(\R)$ if for any $\varepsilon>0$ there exists $\delta>0$ such that if $(u_0,v_0)\in H^1(\R)\times H^1(\R)$ satisfies $\|(u_0,v_0)-(\phi_{\omega},\psi_{\omega})\|_{H^1\times H^1}<\delta$ then the corresponding solution of \eqref{partsys} with initial data $(u_0,v_0)$ exists globally and satisfies
$$
\inf_{r\in\R}\|(u(t),v(t))-(\phi_{\omega}(\cdot+r),\psi_{\omega}(\cdot+r))\|_{H^1\times H^1}<\varepsilon
$$
for any $t\geq0$. Otherwise, we say that $(\phi_{\omega},\psi_{\omega})$ is unstable in $H^1(\R)\times H^1(\R)$.
\end{definition}

We will show that the solitary waves in Lemma \ref{solwaves} are unstable in $H^1(\R)\times H^1(\R)$. To do so, we define $G(u,v)=E(u,v)+\omega M(u,v)$ with $E$ and $M$ given in \eqref{laws3} and \eqref{laws4}. From \eqref{b1} we immediately see that $(\phi_{\omega},\psi_{\omega})$ is a critical point of $G$, that is,
\begin{equation}\label{critical}
G'(\phi_{\omega},\psi_{\omega})=(0,0).
\end{equation}
In addition, the linearization of $G$ around $(\phi_{\omega},\psi_{\omega})$ is the operator
\begin{equation}\label{L}
\mathcal{L}:=G''(\phi_\omega,\psi_{\omega})=
\begin{pmatrix}
-\partial_{x}^2+\omega & 0\\
0 & -\partial_{x}^2+\omega
\end{pmatrix}
-\begin{pmatrix}
H_{uu}(\phi_{\omega},\psi_{\omega}) & H_{uv}(\phi_{\omega},\psi_{\omega})\\
H_{uv}(\phi_{\omega},\psi_{\omega}) & H_{vv}(\phi_{\omega},\psi_{\omega})
\end{pmatrix}
\end{equation}

From Corollary \ref{cork>2} we know that $(\phi_{\omega},\psi_{\omega})=(\alpha Q_\omega,\beta Q_\omega)$, where $Q_\omega$ is the (unique) ground state solution of 
\begin{equation}\label{Qomega}
-Q''+\omega Q-Q^{2k+1}=0.
\end{equation}
Thus recalling that $F(x,y)=(2k+2)H(x,y)$ we obtain
\begin{equation}\label{L1}
\mathcal{L}=
\begin{pmatrix}
-\partial_{x}^2+\omega & 0\\
0 & -\partial_{x}^2+\omega
\end{pmatrix}
-\frac{(F_{max})^{-1}}{2k+2}\begin{pmatrix}
F_{xx}(x_0,y_0) & F_{xy}(x_0,y_0)\\
F_{xy}(x_0,y_0) & F_{yy}(x_0,y_0)
\end{pmatrix}Q_\omega^{2k},
\end{equation}
where $(x_0,y_0)$ is a maximum point of $F$, that is, according to Corollary \ref{cork>2}, $(x_0,y_0)$ is either $(1,0)$, $(0,1)$, or $\left(\frac{1}{\sqrt{2}},\frac{1}{\sqrt2}\right)$.

Now we have the following result.

\begin{theorem}\label{insta}
Assume that $\mathcal{L}$ has a unique negative eigenvalue which is simple. Assume also that zero is a simple eigenvalue and the rest of the spectrum is positive and bounded away from zero. Then, the solitary wave $(\phi_{\omega},\psi_{\omega})$ is unstable in $H^1(\R)\times H^1(\R)$ provided that $\Lambda''(\omega)<0$, where $\Lambda(\omega)=E(\phi_{\omega},\psi_{\omega})+\omega M(\phi_{\omega},\psi_{\omega})$.
\end{theorem}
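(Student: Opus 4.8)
The plan is to place Theorem \ref{insta} within the abstract instability framework of Grillakis, Shatah and Strauss, adapted to KdV-type Hamiltonian systems as in Bona--Souganidis--Strauss and the references \cite{aam, flpastor11}. First I would record the Hamiltonian structure of \eqref{partsys}: writing $U=(u,v)$, the system takes the form $\partial_t U=\partial_x E'(U)$, so the skew-adjoint operator generating the flow is $J=\partial_x$, translation is the only continuous symmetry, and its associated conserved quantity is the momentum $M$. Since $(\phi_\omega,\psi_\omega)$ solves \eqref{b1}, it is a critical point of $G=E+\omega M$, and the Hessian $\mathcal{L}=G''(\phi_\omega,\psi_\omega)$ in \eqref{L} is exactly the operator whose spectrum is controlled by the hypotheses. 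Differentiating the profile equation \eqref{b1} in $x$ shows $(\phi_\omega',\psi_\omega')\in\ker\mathcal{L}$, so the assumed simple kernel is spanned by this translation direction.

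Next I would extract the scalar information governing stability. Let $\langle\cdot,\cdot\rangle$ denote the $L^2(\R)\times L^2(\R)$ pairing. Because $M'(u,v)=(u,v)$ and $M''=\mathrm{Id}$, differentiating the identity \eqref{critical} with respect to $\omega$ gives $\mathcal{L}\,\partial_\omega(\phi_\omega,\psi_\omega)=-(\phi_\omega,\psi_\omega)$. Evaluating $G$ at its critical point yields $\Lambda(\omega)=G(\phi_\omega,\psi_\omega)$ with $\Lambda'(\omega)=M(\phi_\omega,\psi_\omega)$ and hence
\begin{equation*}
\Lambda''(\omega)=\frac{d}{d\omega}M(\phi_\omega,\psi_\omega)=\langle(\phi_\omega,\psi_\omega),\partial_\omega(\phi_\omega,\psi_\omega)\rangle=-\langle\mathcal{L}^{-1}(\phi_\omega,\psi_\omega),(\phi_\omega,\psi_\omega)\rangle.
\end{equation*}
Thus $\Lambda''(\omega)<0$ is equivalent to $\langle\mathcal{L}^{-1}(\phi_\omega,\psi_\omega),(\phi_\omega,\psi_\omega)\rangle>0$, which by the standard index count means that $\mathcal{L}$, restricted to the codimension-one tangent space $\{w:\langle(\phi_\omega,\psi_\omega),w\rangle=0\}$ of the momentum level set, still has a genuinely negative direction. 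In the language of \cite{aam, flpastor11}, with $n(\mathcal{L})=1$ and convexity index $p(\Lambda'')=0$, the difference $n(\mathcal{L})-p(\Lambda'')=1$ is odd, which is precisely the GSS instability condition.

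The core of the argument is then to convert this spectral picture into a dynamical statement. The plan is to build, in a tubular neighborhood of the orbit $\{(\phi_\omega(\cdot+r),\psi_\omega(\cdot+r))\}$, an auxiliary Lyapunov-type functional together with a translation-invariant vector field $Y$ pointing along the negative direction of $\mathcal{L}$ on the constraint space. Using the conservation of $E$ and $M$, one shows that along any solution starting from a well-chosen perturbation of $(\phi_\omega,\psi_\omega)$ the functional $\mathcal{A}(U)=\langle Y(U),M'(U)\rangle$ stays bounded while its time derivative is bounded below by a positive constant as long as the solution remains close to the orbit; this forces the solution to exit the neighborhood in finite time, contradicting stability.

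The main obstacle I expect is the non-invertibility of the symmetry operator $J=\partial_x$, which is the usual difficulty distinguishing the KdV case from the Schr\"odinger case: the abstract theorem requires inverting $J$ to define the unstable vector field $Y$, and here $\partial_x^{-1}$ is only available on mean-zero functions. I would handle this exactly as in \cite{aam, flpastor11}, constructing $Y$ on the appropriate subspace (using that $(\phi_\omega,\psi_\omega)$ and its derivatives decay and, after applying $\partial_x$, have zero mean) and verifying the required regularity together with the smoothness in $\omega$ of the curve $\omega\mapsto(\phi_\omega,\psi_\omega)$ furnished by Lemma \ref{solwaves}. Once $Y$ and $\mathcal{A}$ are in place, the convexity estimate coming from $\Lambda''(\omega)<0$ closes the argument.
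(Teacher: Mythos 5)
Your proposal is correct and follows essentially the same route as the paper, which itself simply invokes the Bona--Souganidis--Strauss-type argument for systems (Theorem 6.2 of \cite{aam}) without reproducing the details; your outline of the GSS/BSS framework, the identity $\Lambda''(\omega)=-\langle\mathcal{L}^{-1}(\phi_\omega,\psi_\omega),(\phi_\omega,\psi_\omega)\rangle$, and the construction of the unstable vector field $Y$ is exactly what that citation supplies. You also correctly flag the two genuine technical points, namely the non-invertibility of $J=\partial_x$ and the need for strong spatial decay of the profiles, the latter being precisely what the paper addresses via Corollary \ref{cordecay}.
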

\begin{proof}
The proof follows the same ideas as in Theorem 6.2 in \cite{aam}, which in turn is an extension to systems of the results in \cite{bona87}. So we will omit the details.  We just highlight that usually we need some strong decay at infinity of the solitary waves. Here this is not an issue because from Corollary \ref{cordecay} our solitary waves has an exponential decay.
\end{proof}

With Theorem \ref{insta} in hand we are able to prove the following.

\begin{theorem}\label{insta1}
Assume $k>2$. The solitary waves in Lemma \ref{solwaves} are unstable in $H^1(\R)\times H^1(\R)$ for any $\omega>0$.
\end{theorem}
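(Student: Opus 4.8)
The plan is to apply Theorem~\ref{insta}, so the entire argument reduces to verifying its two hypotheses for the curve $(\phi_\omega,\psi_\omega)$ produced in Lemma~\ref{solwaves}: first, that the operator $\mathcal{L}$ in \eqref{L1} has exactly one negative eigenvalue (simple), a simple eigenvalue at zero, and the rest of its spectrum positive and bounded away from zero; and second, that $\Lambda''(\omega)<0$ for every $\omega>0$.

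For the spectral hypothesis, the key point is that the potential matrix in \eqref{L1} is the \emph{constant} symmetric matrix
\[
\mathcal{M}:=\frac{(F_{max})^{-1}}{2k+2}\begin{pmatrix} F_{xx}(x_0,y_0) & F_{xy}(x_0,y_0)\\ F_{xy}(x_0,y_0) & F_{yy}(x_0,y_0)\end{pmatrix},
\qquad
\mathcal{L}=\mathrm{diag}(-\partial_x^2+\omega)-Q_\omega^{2k}\mathcal{M}.
\]
A fixed orthogonal change of variables in the two components of $H^1(\R)\times H^1(\R)$ commutes with $-\partial_x^2+\omega$ and diagonalizes $\mathcal{M}$, thereby decoupling $\mathcal{L}$ into two scalar Schr\"odinger operators. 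First I would identify the eigenpairs of $\mathcal{M}$: since $F$ is homogeneous of degree $2k+2$, Euler's identity applied to $F_x,F_y$ gives $\mathrm{Hess}\,F(x_0,y_0)(x_0,y_0)^{T}=(2k+1)\nabla F(x_0,y_0)$, and the Lagrange relation $\nabla F(x_0,y_0)=2(k+1)F_{max}(x_0,y_0)^{T}$ (compare \eqref{a9}) shows that $(x_0,y_0)$ is an eigenvector of $\mathcal{M}$ with eigenvalue $2k+1$. The orthogonal (tangential) eigenvector carries an eigenvalue $\mu_2\leq 1$, the inequality being exactly the second-order maximality condition for $F$ on the circle $x^2+y^2=1$. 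Hence $\mathcal{L}$ is unitarily equivalent to $\mathrm{diag}(L_+,\mathcal{L}_2)$ with
\[
L_+=-\partial_x^2+\omega-(2k+1)Q_\omega^{2k},\qquad \mathcal{L}_2=-\partial_x^2+\omega-\mu_2 Q_\omega^{2k}.
\]
The classical scalar theory for the ground state $Q_\omega$ of \eqref{Qomega} shows that $L_+$ has a single simple negative eigenvalue and a simple kernel spanned by $Q_\omega'$, while $\mathcal{L}_2\geq -\partial_x^2+\omega-Q_\omega^{2k}\geq0$; when $\mu_2<1$ this last operator is strictly positive and bounded below by a positive constant. Consequently the kernel of $\mathcal{L}$ is one–dimensional, generated by $\partial_x(\phi_\omega,\psi_\omega)$ (the image of $Q_\omega'$ in the radial slot), and $\mathcal{L}$ has exactly one simple negative eigenvalue, as required.

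The second hypothesis is a direct scaling computation. Differentiating $\Lambda(\omega)=E(\phi_\omega,\psi_\omega)+\omega M(\phi_\omega,\psi_\omega)$ and using the critical point identity \eqref{critical} one gets $\Lambda'(\omega)=M(\phi_\omega,\psi_\omega)$. The scaling $\phi_\omega(x)=\omega^{1/2k}\phi_1(\sqrt\omega x)$ of Lemma~\ref{solwaves} yields $M(\phi_\omega,\psi_\omega)=\omega^{\frac{2-k}{2k}}M(\phi_1,\psi_1)$, whence
\[
\Lambda''(\omega)=\frac{2-k}{2k}\,\omega^{\frac{2-k}{2k}-1}M(\phi_1,\psi_1).
\]
Since $M(\phi_1,\psi_1)>0$, for $k>2$ we conclude $\Lambda''(\omega)<0$ for all $\omega>0$, which is precisely the instability condition.

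The main obstacle is the strict positivity of $\mathcal{L}_2$, that is, the strict inequality $\mu_2<1$, which amounts to the \emph{non-degeneracy} of the maximum of $F$ on the circle at the points singled out in Corollary~\ref{cork>2}; without it, $\mathcal{L}_2$ would acquire a kernel and zero would fail to be a simple eigenvalue. This I would settle case by case from the explicit critical-point analysis there: at $(x_0,y_0)=(1,0)$ or $(0,1)$ one checks that $F_{yy}(1,0)=0$ for $k>2$, so $\mu_2=0$ and $\mathcal{L}_2=-\partial_x^2+\omega$ is trivially positive; at $(x_0,y_0)=\left(\tfrac{1}{\sqrt2},\tfrac{1}{\sqrt2}\right)$ the strict maximality established in case~(iii) furnishes a strictly negative tangential second derivative, hence $\mu_2<1$. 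With both spectral conditions verified and $\Lambda''(\omega)<0$, Theorem~\ref{insta} yields that $(\phi_\omega,\psi_\omega)$ is unstable in $H^1(\R)\times H^1(\R)$ for every $\omega>0$.
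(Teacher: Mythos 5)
Your proposal is correct in outline and follows essentially the same route as the paper: apply Theorem \ref{insta}, decouple $\mathcal{L}$ by diagonalizing the constant potential matrix (the paper does this by adding and subtracting the two eigenvalue equations, i.e.\ using the basis $(1,1)$, $(1,-1)$ at the symmetric maximum point), reduce to scalar Sturm--Liouville theory for $-\partial_x^2+\omega-(2k+1)Q_\omega^{2k}$ and a comparison with $-\partial_x^2+\omega-Q_\omega^{2k}$, and finish with $\Lambda''(\omega)<0$. Your packaging is somewhat more structural: identifying $(x_0,y_0)$ as an eigenvector of $\mathcal{M}$ with eigenvalue $2k+1$ via Euler's identity and the Lagrange relation, and recognizing $\mu_2\leq 1$ as the second-order optimality condition, explains conceptually why the radial mode always carries the coefficient $2k+1$; and your scaling computation of $\Lambda''(\omega)$ is cleaner than the paper's explicit $\mathrm{sech}$ integral, while giving the same exponent $\frac{1}{k}-\frac{1}{2}$.

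The one step that does not hold up as written is the claim that, at $\left(\frac{1}{\sqrt2},\frac{1}{\sqrt2}\right)$, ``the strict maximality established in case (iii) furnishes a strictly negative tangential second derivative, hence $\mu_2<1$.'' Strict maximality of $F$ on the circle does not imply non-degeneracy of the constrained Hessian (a strict maximum can be degenerate to second order), so $\mu_2<1$ genuinely requires a computation rather than a soft argument. The paper supplies exactly this: it computes the tangential coefficient to be
$$
\mu_2=\left(a+b+\tfrac{2a\gamma(k+1)}{k}\right)^{-1}\left((2k+1)a-b-\tfrac{2a\gamma(k-1)}{k}\right),
$$
and the inequality $\mu_2<1$ reduces to $a(k-2\gamma)<b$, which follows from the hypotheses of Corollary \ref{cork>2}(iii) together with the standing restriction $\gamma<2^{k-1}-k/2$. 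Since you explicitly flag this as the main obstacle and point to the critical-point analysis of Corollary \ref{cork>2} as the place to settle it, the gap is one of execution rather than of strategy; inserting the explicit inequality completes your argument. Your treatment of the boundary maxima $(1,0)$ and $(0,1)$, where $F_{yy}=F_{xy}=0$ for $k>2$ so that $\mu_2=0$, is correct and covers the cases the paper dismisses as ``simpler.''
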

\begin{proof}
	We will use Theorem \ref{insta}.
First let us study the spectrum of the operator $\mathcal{L}$. We will consider only the case when $(x_0,y_0)=\left(\frac{1}{\sqrt{2}},\frac{1}{\sqrt2}\right)$. The cases $(x_0,y_0)=(1,0)$ or $(x_0,y_0)=(0,1)$ are simpler.

By taking the derivative with respect to $x$ in \eqref{b1} we promptly obtain that $( Q_\omega', Q_\omega')$ belongs to the kernel of $\mathcal{L}$. Assume now $(u,v)$ is an eigenfunction of $\mathcal{L}$ associated to the eigenvalue $\lambda$. Thus,
\begin{equation}\label{b2}
\begin{cases}
-u''+\omega u-\dfrac{(F_{max})^{-1}}{2k+2}\left( F_{xx}(x_0,y_0)u+F_{xy}(x_0,y_0)v\right)Q_\omega^{2k}=\lambda u,\\
-v''+\omega v-\dfrac{(F_{max})^{-1}}{2k+2}\left( F_{xy}(x_0,y_0)u+F_{yy}(x_0,y_0)v\right)Q_\omega^{2k}=\lambda v.
\end{cases}
\end{equation}
Subtracting and adding the equations in \eqref{b2} we obtain
\begin{equation}\label{b3}
\begin{cases}
\begin{aligned}
-(u-v)''+\omega (u-v)-\dfrac{(F_{max})^{-1}}{2k+2}\Big(&(F_{xx}(x_0,y_0)-F_{xy}(x_0,y_0))u\\
&+(F_{xy}(x_0,y_0)-F_{yy}(x_0,y_0))v\Big)Q_\omega^{2k}=\lambda (u-v),
\end{aligned}\\
\begin{aligned}
-(u+v)''+\omega (u+v)-\dfrac{(F_{max})^{-1}}{2k+2}\Big(& (F_{xx}(x_0,y_0)+F_{xy}(x_0,y_0))u\\
&+(F_{xy}(x_0,y_0)+F_{yy}(x_0,y_0))v\Big)Q_\omega^{2k}=\lambda(u+v).
\end{aligned}
\end{cases}
\end{equation}
Using the definition of $F$ we deduce that

\begin{align*}
\dfrac{(F_{max})^{-1}}{2k+2}(F_{xx}(x_0,y_0)-F_{xy}(x_0,y_0))& =-\dfrac{(F_{max})^{-1}}{2k+2}(F_{xy}(x_0,y_0)-F_{yy}(x_0,y_0))\\
& =\left(a + b +  \frac{2 a \gamma (k+1)}{k}\right)^{-1}\left((2k+1)a-b - \dfrac{2a\gamma (k-1)}{k} \right)
\end{align*}
and
\begin{align*}
\dfrac{(F_{max})^{-1}}{2k+2} (F_{xx}(x_0,y_0)+F_{xy}(x_0,y_0)) & =\dfrac{(F_{max})^{-1}}{2k+2}(F_{xy}(x_0,y_0)+F_{yy}(x_0,y_0))=2k+1
\end{align*}
Thus, \eqref{b3} reduces to
\begin{equation}\label{b4}
\begin{cases}
-(u-v)''+\omega (u-v)-\left(a + b +  \frac{2 a \gamma (k+1)}{k}\right)^{-1}\left((2k+1)a-b - \dfrac{2a\gamma (k-1)}{k} \right)Q_\omega^{2k}(u-v)=\lambda (u-v),\\
-(u+v)''+\omega (u+v)-(2k+1)Q_\omega^{2k}(u+v)=\lambda (u+v).
\end{cases}
\end{equation}
Now we introduce the operators $\mathcal{L}_1=-\partial_{x}^2+\omega-(2k+1)Q_\omega^{2k}$ and $\mathcal{L}_2=-\partial_{x}^2+\omega-Q_\omega^{2k}$. From \eqref{Qomega} we see that $\mathcal{L}_1(Q_\omega')=0$ and $\mathcal{L}_2(Q_\omega)=0$. It is well known that $Q_\omega$ is given by
$$
Q_\omega(x)=\left((k+1)\omega\ \mathrm{sech}^2(k\sqrt{\omega}x)\right)^{\frac{1}{2k}}
$$
from which we obtain that $Q_{\omega}'$ has only one zero on the whole line. In particular, it follows from Sturm-Liouville theory that $\mathcal{L}_1$ has a unique negative eigenvalue, zero is a simple eigenvalue and the rest of the spectrum is positive and bounded away from zero (see, for instance, \cite[Theorem B.61]{angulobook}). Also, since $Q_\omega$ has no zeros on the whole line, it follows that zero is the first eigenvalue of $\mathcal{L}_2$ (see, for instance, \cite[Theorem B.59]{angulobook}) and the rest of the spectrum is bounded away from zero. In addition, recalling the relations between $a$ and $b$ in Corollary \ref{cork>2} and that we are assuming $(x_0,y_0)=\left(\frac{1}{\sqrt{2}},\frac{1}{\sqrt2}\right)$ we get
$$
(2k+1)a-b - \dfrac{2a\gamma (k-1)}{k}  < a + b +  \frac{2 a \gamma (k+1)}{k}.
$$
Thus comparing with $\mathcal{L}_2$ we infer that the first eigenvalue of the operator $$-\partial_{x}^2+\omega-\left(a + b +  \frac{2 a \gamma (k+1)}{k}\right)^{-1}\left((2k+1)a-b - \dfrac{2a\gamma (k-1)}{k} \right)Q_\omega^{2k}$$ must be positive and the rest of the spectrum is positive and bounded away from zero. Thus, the negative and null eigenvalues come only from $\mathcal{L}_1$. Putting all these information together and using \eqref{b4} we finally deduce that $\mathcal{L}$ has a unique negative eigenvalue, its kernel is one-dimensional and the rest of the spectrum is positive and bounded away from zero.

In order to conclude the proof of the theorem it remains to establish that $\Lambda''(\omega)<0$. But from \eqref{critical} we obtain $\Lambda'(\omega)=  M(\phi_{\omega},\psi_\omega)$. Since
\[
\begin{split}
M(\phi_{\omega},\psi_\omega)&=M(\alpha Q_\omega, \alpha Q_\omega)\\
&=\frac{(F_{max})^{-\frac{1}{k}}}{2}M(Q_\omega,Q_\omega)\\
&=\frac{(F_{max})^{-\frac{1}{k}}}{2}\frac{(k+1)^{\frac{1}{k}}}{k}\omega^{\frac{1}{k}-\frac{1}{2}}\int_{\R}\mathrm{sech}^{\frac{2}{k}}(x)dx.
\end{split}
\]
Consequently,
\begin{equation}\label{eqfinal}
\Lambda''(\omega)=\frac{(F_{max})^{-\frac{1}{k}}}{2}\frac{(k+1)^{\frac{1}{k}}}{k}\left(\frac{1}{k}-\frac{1}{2}\right)\omega^{\frac{1}{k}-\frac{3}{2}}\int_{\R}\mathrm{sech}^{\frac{2}{k}}(x)dx<0,
\end{equation}
because $k>2$. The proof of the theorem is thus completed.
\end{proof}

\begin{remark}
The approach presented in this section is not restricted to the case of ground states given in Corollary \ref{cork>2}. Actually, Theorem \ref{insta} is still true if, more generally, $H$ is given in \eqref{functionHint}.  Also, note that our calculations in \eqref{eqfinal} depends only on the characterization of the ground states in Theorem \ref{char}. So, the main difficulty in proving the instability in the general  case (with $k>2$) consists in establishing the spectral properties of the linearized operator in \eqref{L1}.

Note also that in the case $k=2$ (critical case) we obtain $\Lambda''(\omega)=0$. So that we are unable to conclude the stability/instability of the traveling waves.
\end{remark}

\subsection*{Acknowledgment}
This work is part of the Ph.D. Thesis of the first author,
which was concluded at IMECC-UNICAMP. The first author acknowledges the financial
support from Capes/Brazil and CNPq/Brazil. The second author is partially supported by CNPq/Brazil grants 402849/2016-7 and 303098/2016-3.

\bibliographystyle{acm}  
\bibliography{reference}  

\end{document}